\newtheorem*{lemma}{Lemma}
\newtheorem*{prop}{Proposition}
\newtheorem*{thm}{Theorem}
\newtheorem*{cor}{Corollary}
\newcommand{\twoheaddownarrow}{\overset{\sim}{\twoheaddownarrow}}
\newcommand{\nc}{\newcommand}
\nc{\Ker}{\operatorname{Ker}} \nc{\rker}{\operatorname{rKer}}
\nc{\im}{\operatorname{Im}}
\nc{\stab}{\operatorname {Stab}}
\nc{\ann}{\operatorname {Ann}}
\nc{\Id}{\operatorname {Id}}
\nc{\Prim}{\operatorname {Prim}}
\nc{\Real}{\operatorname {Re}}
\nc{\Ext}{\operatorname {Ext}}
\nc{\rad}{\operatorname {rad}}
\nc{\rk}{\operatorname {rank}}
\nc{\Aut}{\operatorname {Aut}}
\nc{\supp}{\operatorname {supp}}
\nc{\height}{\operatorname {ht}}
\tikzset{mycolor/.style = {line width=1bp,color=#1}}%
\tikzset{myfillcolor/.style = {draw,fill=#1}}%
\newcommand*{\encircled}[1]{\relax\ifmmode\mathpalette\@encircled@math{#1}\else\@encircled{#1}\fi}
\newcommand*{\@encircled@math}[2]{\@encircled{$\m@th#1#2$}}
\newcommand*{\@encircled}[1]{%
  \tikz[baseline,anchor=base]{\node[draw,circle,outer sep=0pt,inner sep=.2ex] {#1};}}
\begin{document}

\title [Composition Tableau]{The Composition Tableau and Reconstruction of the Canonical Weierstrass Section for Parabolic Adjoint Action in type $A$}
\author [Yasmine Fittouhi and Anthony Joseph]{Yasmine Fittouhi and Anthony Joseph}
\date{\today}
\maketitle
\vspace{-.9cm}\begin{center}

Department of Mathematics\\
The University of Haifa\\
Haifa, 3498838, Israel\\
fittouhiyasmine@gmail.com
\end{center}\

\

\

\vspace{-.9cm}\begin{center}
Donald Frey Professional Chair\\
Department of Mathematics\\
The Weizmann Institute of Science\\
Rehovot, 7610001, Israel\\
anthony.joseph@weizmann.ac.il
\end{center}\

\

\date{\today}
\maketitle

Key Words: Invariants, Parabolic adjoint action, Semi-standard Tableaux.

AMS Classification: 17B35

 \

\textbf{Abstract}.

A ``Composition map'' is constructed, leaning heavily on earlier work [Y. Fittouhi and A. Joseph, Parabolic adjoint action, Weierstrass sections and components of the nilfibre in type $A$, Indag Math. and Y. Fittouhi and A. Joseph, The canonical component of the nilfibre for parabolic adjoint action, Weierstrass sections in type $A$, preprint, Weizmann, 2021].  It defines a composition tableau which  recovers the ``canonical'' Weierstrass section $e+V$ described in the first paper above.  Moreover \textit{without reference to this earlier work}, it is then shown that $e+V$ is indeed a Weierstrass section. This results in a huge simplification.  Moreover one may read off from the composition tableau the ``VS quadruplets'' of the second of the above papers, thereby describing the ``canonical component'' of the nil-fibre in which $e$ lies but does not of itself determine.

\section {Introduction} \label {1}
 \subsection {The Notion of a Weierstrass Section}\label {1.1}

Let $P$ be a connected algebraic group acting by morphisms on a vector space $\mathfrak m$ (necessarily finite-dimensional). Let $I:=\mathbb C[\mathfrak m]^P$ be the algebra of the $P$ invariant functions on $\mathfrak m$.  A Weierstrass section $e+V$ for this action is a linear translate of a subspace $V$ of $\mathfrak m$ by an element $e \in \mathfrak m$ such that the restriction of $I$ to $e+V$ induces an isomorphism of $I$ onto $\mathbb C[V]$.  It results in a ``canonical form'' for most $P$ orbits in $\mathfrak m$. Of course for this $I$ needs to be polynomial which is rather unusual and even this is no guarantee \cite [11.4, Example 2]{J3}.

Popov and Vinberg \cite {PV} summarized early work on this subject with $P$ reductive. Notably they explain how it leads to Weierstrass canonical form for elliptic curves. Let $\mathscr N \subset \mathfrak m$ denote the zero variety of the augmentation $I_+$. A tricky point here is to show that $\mathscr N$ is irreducible \cite [2.5]{J4}, for which there are few techniques and can often fail \cite [11.2.2] {J1}, \cite [5.4.1]{FJ1}.

\subsection {Adapted Pairs}\label {1.2}

 The ``time-honoured'' way to obtain a Weierstrass section is by constructing an adapted  pair $(e,h)$ with  $e \in \mathscr N$ and $h$ a semisimple element  of $\mathfrak p:=\text{Lie} \ P$ such that $e$ is an $h$ eigenvector and $P.e$ is dense in an irreducible component of $\mathscr N$, that is to say $e$ is regular.   Then $V$ is taken to be an $h$ stable complement of $\mathfrak p.e$ in $\mathfrak m$.  Finally a favourable comparison of degrees of generators and eigenvalues of $h$ on V gives the required result.  This was the point of view taken in the classical work of Kostant \cite {K} for $P=G$, a simple Lie group. In this he appealed to the entire principal s-triple $(e,h,f)$. By a further coincidence $e$ also generated an orbit of maximal dimension in $\mathfrak g=\text{Lie} \ G$.

If $P$ is not reductive, it may be necessary to cut down $P$ slightly, for example to its derived group $P'$, so that it admits no non-trivial semi-invariants and this will be assumed without further mention.

Having an adapted pair works rather well for biparabolic co-adjoint action \cite {J1} in type $A$ and even outside type $A$ \cite {F}.   It can even lead (in a backhanded fashion - by improving bounds on $I$ \cite [6.11] {J2}) to showing that $I$ is polynomial, calculations which have been taken further notably by Fauquant-Millet \cite {F}.

A backhanded way of obtaining a Weierstrass section when no adapted pair exists is described in \cite [Sects. 8,9]{J3} for $P$ being the Borel $B$ of a simple Lie algebra $\mathfrak g$. Yet this may lead to ``undesirable'' properties of $B(e+V)$ \cite [11.4, Example 3]{J3}, which can also be expected if $e$ is not regular.

\subsection {Adjoint Action}\label {1.3}

If $P$ is not reductive, the invariants for co-adjoint and adjoint action can behave rather differently - see \cite [3.2]{FJ1} for example.  Correspondingly one might expect differences in constructing Weierstrass sections.

For  the adjoint action of a parabolic $P$ on the Lie algebra of its nilradical $\mathfrak m$ which is the case considered here, the method of adapted pairs fails miserably.  For one thing  an irreducible component of $\mathscr N$ can fail to admit a dense $P$ orbit \cite [6.10.7]{FJ2}. (We expect that if each column is lengthened by one, then  $\mathscr N$ becomes irreducible.) Thus $e$ need not be regular and one may not be able to find a required semisimple element $h$ - \cite [3.2]{FJ2}.

In the case of the adjoint action of a parabolic $P$ acting on its nilradical $\mathfrak m$, in type $A$ we gave a construction of a Weierstrass section $e+V$ by a combinatorial construction  \cite [5.4]{FJ2}.  It was extremely complicated involving all sorts of exotic concepts: viz. Gating \cite [4.2]{FJ2}, Up-going linkage, Adjacency \cite [5.4.3]{FJ3}, Gathering of loose ends, Minimal distance criterion \cite [5.4.7]{FJ3} and so on.  In this $e$ was often far from regular.

In the first three fifths of this paper we show how our previous work, specifically \cite [3.6]{FJ3}, gives rise to a ``composition tableau''.  Then we show (Proposition \ref {4.9}) how this tableau can be \textit{directly constructed} by a method which is so easy (see \ref {4.7}) that it could have been carried out by the elderly ladies who in the distant past inspected and collected commuters' tickets in the London Underground (or indeed in the Paris metro).  Then we show in the last two fifths of this paper that \textit{independent of previous work} the resulting $e+V$ is indeed a Weierstrass section.  Yet we see no way that this could have been guessed (or even suspected) a priori.  Moreover to our delight and astonishment many of the previous exotic concepts needed for our construction \cite [5.4]{FJ2} of a Weierstrass section, drop out automatically as if by magic.  In brief our previous method had a certain internal logic, the present work has none (that we can see) except that it works.

We should emphasize that the present work by no means obviates the whole of \cite {FJ1}, \cite {FJ2} and \cite {FJ3} but mainly just \cite [4.4]{FJ2},\cite [5.4]{FJ2} and \cite [3.4,3.5]{FJ3}.  That is one good reason for needing to prove that the present work gives the same Weierstrass section even if the proof is rather wearisome.

A fundamental question is what our present construction means and how it can be wider applied (for example for the other components of the nilfibre and outside type $A$).  At present we have no answer and it remains a very significant challenge.

\subsection {Summary of Contents}\label {1.4}
Let $n$ be a positive integer and $\textbf{M}$ the set of $n\times n$ matrices. Those of determinant $1$ form a multiplicative group isomorphic to $SL(n)$.

A parabolic subgroup (of $SL(n)$) will always be one containing the Borel subgroup $B$ of upper triangular matrices of $\textbf{M}$.

Recall \cite [1.3]{FJ3}  that for the action of a parabolic $P$ on the nilradical of its Lie algebra $\mathfrak m$, there is a ``canonical'' Weierstrass section $e+V$.
Amazingly it is determined by a family of ``composition  maps'' \cite  [Lemma 3.6.8]{FJ3} all of which have the same structure.

We show (Theorem \ref {3.6}) that this structure is given rather simply in terms of the notion of a column staircase.  This leads to a composition tableau (Sect. \ref {4}) which determines $e+V$, but makes no particular reference to the column staircases and can be computed in a simple and independent manner (\ref {4.7}-\ref{4.9}) and that indeed $e+V$ is a Weierstrass section (Theorem \ref {5.8} and Section \ref {5.9}).

Let $\mathscr N^e$ be the irreducible component of $\mathscr N$ described in \cite [Cor. 6.9.8]{FJ2} as a $B$ saturation set of an appropriate subspace of $\mathfrak m$. We remark that the vector space $E_{VS} \subset \mathscr N^e$, described in \cite [6.10.4]{FJ2}, is determined by the pair $(e,V)$ and can be simply read off from the Composition Tableau - see \ref {5.10}.  Crucially \cite [Prop. 4.5.3]{FJ3} it has the property that $\overline{PE_{VS}}=\mathscr N^e$, whilst the inclusion $\overline {Pe_{VS}}\subset \mathscr N^e$ is generally strict.  Worse still $\mathscr N^e$ need not have a dense $P$ orbit \cite [6.10.7]{FJ2}. We call $\mathscr N^e$ the ``canonical component'' of $\mathscr N$.  We expect that it will nearly always be the only component, for example if there are no degree $1$ invariants.


\section {Preliminaries}\label{2}

\subsection {Tableaux} \label {2.1}

Given $u\leq v$ integers, set $[u,v]:= \{i \in \mathbb Z|u\leq i \leq v\}$.

Let  $\textbf{c}:=(c_1,c_2,\ldots,c_k)$ be a composition of $n$, that is to say a set $\textbf{c}$ of positive integers which sum to $n$.

Let $\mathscr D$ be the diagram given by a set of columns $\{C_i\}_{i=1}^k$ numbered from left to right with $C_i$ of $\height C_i$ being $c_i$. In this each column is viewed as a vertical array of descending square boxes all starting from the same row $R_1$.

Set $[\mathscr D]=\cup_{i=1}^k c_i$ and $|\mathscr D|=\max_{i=1}^k c_i$.

  Let
$\mathscr T$, be the tableau obtained from $\mathscr D$ in which the integers $1,2,\ldots,n$ are inserted sequentially in the boxes, first down the columns and then on going from left to right.

As in \cite [2.3]{FJ2}, let $\{R_j\}_{j\in \mathbb N^+}$ be the rows of $\mathscr D$ labelled with positive integers increasing from top to bottom.  For all $i \in \mathbb N^+$, set $R^i:=\{R_j\}_{j=1}^i$.

A box is labelled as $b_{i,j}$ if it occurs in the $i^{th}$ row and $j^{th}$ column of $\mathscr D$.  Let $x_{i,j}$ denote the corresponding co-ordinate vector in \textbf{M}.

We shall also label a box $b$ of $\mathscr D$ as $b(t)$ if it carries the label $t$ in $\mathscr T$.

Two boxes $b,b'$ in $\mathscr D$ are said to be \textbf{adjacent} in $\mathscr D$ if they can be joined by a horizontal line passing through no other boxes in $\mathscr D$.

\

\textbf {Definition}. A right going line from a box $b \in R_u$ to a box $b' \in R_v$ is said to be up-going (resp. down-going) if $u \geq v$ (resp. $u\leq v$).  Viewed as a left going line it is said to be down-going (resp. up-going).

\

This convention is natural as it is the sense in which the diagrams are drawn and matrices are labelled.  On the other hand when one constructs $\ell(\mathscr D)$ by an induction of rows $R_i:i=1,2,\ldots,$ as in \cite [5.4]{FJ2}, it is more natural to regard $R_i$ as being higher up than $R_j$ if $i<j$, just as in the designation of the floors of a building. Thus the opposite convention was implicit in \cite [5.4.8]{FJ2} and \cite [3.5]{FJ3}.  We always refer to the presence of this change when one must pay attention.   This conventional dichotomy was already present in Column shifting \cite [2.4,\textbf{N.B.}]{FJ2} where we noted that it would have been better to have adopted the French convention of drawing tableaux,  but who wants to do that!

 Besides in reading the latter part of this paper (Sect. \ref{5}) one should wash the brain clean of thoughts of previous work.

\subsection {Weierstrass Section Construction} \label {2.2}

The construction of \cite [Sect. 5]{FJ2} gives a family $\ell(\mathscr D)$ of lines joining boxes in distinct columns labelled either by a $1$ or by a $\ast$.  If in $\mathscr T$ the left (resp. right) hand box carries the label $i$ (resp. $j$), then the corresponding line, denoted as $\ell_{i,j}$, is assigned $x_{i,j}$, which we remark always belongs to $\mathfrak m$.  Then by definition, $e$ is just the sum of the co-ordinate vectors defined by the lines labelled by $1$ and $V$ is just the direct sum of the co-ordinate subspaces defined by the lines labelled by $\ast$.

We aim to construct $\ell(\mathscr D)$ much more simply.  The proof that this simpler formulation gives the same result is quite complicated relying heavily on the results of \cite [Sect. 5]{FJ2}.  However if one just wants to see why it gives $e+V$ to be a Weierstrass section then one can turn directly to the rather easy Section \ref {5}.

Recall that the Levi factor $L$ of $P$ is given by a set of square blocks $\textbf{B}_i$ along the diagonal of \textbf{M} of size $c_i$.  Let $\textbf{C}_i$ denote the rectangular block in \textbf{M}, called the $i^{th}$ column block, lying above $\textbf{B}_i$.  One has $\mathfrak m=\oplus_{i=2}^k \textbf{C}_i$.

\subsection {Induction on Number of Columns} \label {2.3}


Let $\mathscr D^k$ be the diagram obtained from $\mathscr D$ by deleting its rightmost column $C_k$.

The construction of $\ell(\mathscr D^k)$ is of course also given by \cite [Sect. 5]{FJ2}.  Moreover by \cite [Lemma 5.4.9]{FJ2}, $\ell(\mathscr D)$ is just the union of $\ell(\mathscr D^k)$ and the set $\ell(\mathscr D^k,C_k)$ of lines (with their labels) joining $\mathscr D^k$ to $C_k$.  By construction our new method will also have this property (Lemma \ref {5.2}). Thus proceeding inductively, we need only compute $\ell(\mathscr D^k,C_k)$.

\subsection {The Composition Map} \label {2.4}

When $c_k> |\mathscr D^k|$, the situation simplifies further.   First by \cite [3.6.2]{FJ3} the lines joining $C_k$ to $\mathscr D^k$ are \textit{all} labelled by a $1$.
Secondly they are independent of $c_k$, \cite [3.6.4]{FJ3}.  Thirdly there is a unique right going line $\ell_t$ labelled by $1$ to the $t^{th}$ row of $C_k$, or no line at all.  In the former case let $r_t$ be the entry of the box in $S$ from which $\ell_t$ starts, in the latter case set $r_t=0$.

Note that $r_t$ is \textit{not} the row of $\mathscr D$ from which $\ell$ starts, but the row of $\textbf{C}_k$ in which $1$ occurs in the $t^{th}$ column of $\textbf{M}$.  Thus to avoid confusion with the labelling of rows of $C_k$ it was written in boldface in \cite [3.3.2]{FJ3}, that is as $\textbf{r}_t$, in \cite [3.3.2]{FJ3}.

By \cite [5.4.8(vi)]{FJ2} a box may carry at most one right going line carrying a $1$ and so in addition the $r_t\neq 0$ are pairwise distinct.

The resulting map $\mathscr D^k \rightarrow \{r_t\}_{t=1}^{|\mathscr D^k|+1}$ of $\mathscr D^k$ to the set pairwise distinct non-negative integers, is by definition \textbf{the composition map}. On its own it does \textit{not} allow one to reconstruct the set of all left going lines from $C_k$ to $S$ with their labels \cite [3.6.7]{FJ3}.  However quite remarkably, when combined with a knowledge of $[\mathscr D^k]$, this reconstruction can be made and by simple rules \cite [Lemmas 3.4.2(i), 3.6.8]{FJ3}.

Thus to determine the canonical Weierstrass section $e+V$, it suffices to determine the composition maps relative to successive removal of columns of $\mathscr D$ from the right.

\section {Towards the Composition Map}\label{3}

\subsection {Neighbouring Columns} \label {3.1}

Recall \cite [4.1.2]{FJ1} that a pair of columns $C,C'$ of height $s$ are said to be neighbouring (of height $s$)  if there are no columns  of height $s$ strictly between them.  To each such pair one may assign a Benlolo-Sanderson invariant \cite {BS} and \cite [3.6.3]{FJ1}.  The latter form a generating set for $S(\mathfrak m)^{P'}$, which is a polynomial algebra \cite [Cor. 5.2.6]{FJ1}.

\subsection {Column Staircases} \label {3.2}

Fix $c$ be a positive integer and $u \in [1,c]$.

  \textbf{Definition}. A column staircase $\mathscr S_u^c$ in $\mathscr D$ is a maximal subset $C'_{i_j}:j\in [u,c]$ of columns in $\mathscr D$, labelled as in \ref {2.1}, with the property that $\height C'_{i_j}=j$, such that every $C'_{i_j}:j\in [u,c]$, has a left neighbour $C_{i_j}$ in $\mathscr D$ and
  such that there are no columns of height $\geq j$ strictly between $C'_{i_j},C'_{i_{j+1}}:u \leq j<c$ (which implies that $C_{i_j}$ lies to the left of the columns in $\mathscr S_u^c$).

  \textbf{Attention.}  Apart from being a left neighbour to $C_{i_j}$ and lying to the left of $\mathscr S_u^c$, there are \textit{no} other conditions on the positioning of the $C_{i_j}$.  The origin of this flexibility is the fact that only the position of the $\ast$ on horizontal lines in the Second Step \cite [4.2.1]{FJ1} is important and that we are using a ``rightmost labelling'' \cite [4.2.2]{FJ1}.  This flexibility is also apparent when we impose \ref {5.4} as part of our new construction. Moreover when we say that a column staircase $C'_{i_j}:j\in [u,c]$ lies to the right of a column $C$ we do not imply any positioning of the set of the left neighbours $C_{i_j}$ relative to $C$.  For example if $\height C=3$, all the compositions $(1,2,3,1,2),(2,1,3,1,2)$ as well as the further four in which the first two entries are pushed across $C$, give a staircase $\mathscr S^2_1$ of height $1$ and depth $2$ to the right of $C$. In this case $\mathscr S^2_1$ is deemed to be the leftmost staircase of depth $2$ to the right of $C$ - see after \textbf{Notation} below. Adjoining a column of height $2$ to the right would give a further staircase $\mathscr S^2_2$ of depth $2$ to the right of $C$ but it would not be the leftmost one of depth $2$.  Concern for these complications are obviated in our new construction.

  \

  \textbf{Notation.}
  Let $b(\mathscr S^c_u)$ be the box left adjacent to $C'_{i_u}\cap R_u$ (the latter is the lowest box in $C'_{i_u}$).  It lies in $R_u$ and in some column, noted $C'_{i_{u-1}}$, and to the right of $C_{i_u}$, possibly strictly.

  \

  When the height $u$ of the first column in $\mathscr S^c_u$ is unspecified we shall simply write $\mathscr S^c$ and if $c$ is also unspecified we shall simple write $\mathscr S$.  We call $u$ (resp. $c$) the height (resp. depth) of $\mathscr S_u^c$ and $c-u+1$ its length, in accordance with the convention of \ref {2.1}.

 A column staircase $\mathscr S_u^c$ is said to be (right) extremal if there are no columns of height $\geq c$ to the right of $C'_{i_c}$, which we recall has height $c$.


We say that a box $b$ has no right adjacent box, if a horizontal line from $b$ does not meet a box strictly to the right of $b$.


Call a box $b$ in $\mathscr D$ right extremal in $\mathscr D$, relative to $\ell(\mathscr D)$ if there is no right going line from $b$, labelled by a $1$.

Obviously a box $b$ in $C_k$ is right extremal, but there may be others.
On the other hand a box may have no right adjacent box, yet not be right extremal and conversely a box may be right extremal and yet have a right adjacent box.

\

 \textbf{Example.}  Consider the composition $(2,2,1,1)$.  The construction of \cite [Sect. 5]{FJ2} gives lines $\ell_{1,3},\ell_{3,5},\ell_{4,6}$ labelled by a $1$ and lines $\ell_{2,3},\ell_{5,6}$ labelled by a $\ast$. Thus $b(2)$ is right extremal with right adjacent box $b(4)$ which is in turn not right extremal.

\subsection {Extremal boxes} \label {3.3}

\begin {lemma}  Assume $c_k>|\mathscr D^k|$.  Then the boxes in $\mathscr D$ which are joined to $C_k$ by a line (necessarily labelled by a $1$) are exactly those which are right extremal in $\mathscr D^k$.
\end {lemma}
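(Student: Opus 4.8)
The plan is to prove the two inclusions separately, throughout using the decomposition $\ell(\mathscr{D})=\ell(\mathscr{D}^k)\sqcup\ell(\mathscr{D}^k,C_k)$ recalled in \ref{2.3}, under which the lines joining two boxes of $\mathscr{D}^k$ are the same whether they are computed in $\ell(\mathscr{D})$ or in $\ell(\mathscr{D}^k)$. The parenthetical remark that any line to $C_k$ is necessarily labelled by a $1$ is exactly \cite[3.6.2]{FJ3}, which applies precisely because $c_k>|\mathscr{D}^k|$.

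The inclusion from ``joined to $C_k$'' to ``right extremal in $\mathscr{D}^k$'' is the straightforward one. Let $b\in\mathscr{D}^k$ be joined to $C_k$. As $C_k$ is the rightmost column the resulting line is right going and carries a $1$; by \cite[5.4.8(vi)]{FJ2} a box carries at most one right going line labelled $1$, so this is the only one at $b$ in $\ell(\mathscr{D})$. In particular $b$ has no right going $1$-line among the boxes of $\mathscr{D}^k$, hence none in $\ell(\mathscr{D}^k)$, so $b$ is right extremal in $\mathscr{D}^k$.

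For the reverse inclusion I would first reformulate. Since the lines of $\ell(\mathscr{D})$ internal to $\mathscr{D}^k$ agree with those of $\ell(\mathscr{D}^k)$, a box $b$ right extremal in $\mathscr{D}^k$ has no right going $1$-line inside $\mathscr{D}^k$, so the only way it can fail to be joined to $C_k$ is to be right extremal in the whole of $\mathscr{D}$. Thus the claim is equivalent to the assertion that the right extremal boxes of $\mathscr{D}$ are precisely those of $C_k$, that is, that every box of $\mathscr{D}^k$ acquires a right going $1$-line in $\ell(\mathscr{D})$; equivalently, that $\ell(\mathscr{D})$ has exactly $n-c_k$ lines labelled $1$, one sourced at each box of $\mathscr{D}^k$. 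Given the first inclusion together with the fact (\ref{2.4}) that the nonzero $r_t$ are pairwise distinct, so that distinct rows of $C_k$ receive lines from distinct boxes, it suffices to match cardinalities, i.e. to show that \emph{each} right extremal box of $\mathscr{D}^k$ is the source of a line to $C_k$.

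This surjectivity is the main obstacle, and it is where the hypothesis $c_k>|\mathscr{D}^k|$ does the real work. Because $c_k>|\mathscr{D}^k|$, the column $C_k$ meets every row occupied by $\mathscr{D}^k$ and carries in addition the surplus rows $|\mathscr{D}^k|+1,\ldots,c_k$, so there are always enough target rows to receive a (possibly down going) $1$-line from every right extremal box, even when several of them lie in a common row. I would extract the matching itself from the explicit construction of $\ell(\mathscr{D}^k,C_k)$ in \cite[5.4.8]{FJ2}; by \cite[3.6.4]{FJ3} it is independent of $c_k$, so one may freely enlarge $c_k$ to guarantee the room. The point that cannot be read off from the combinatorics of extremal boxes alone, and so forces the appeal to \cite[Sect. 5]{FJ2}, is precisely that no right extremal box escapes being caught by $C_k$. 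That the hypothesis is genuinely needed is shown by the composition $(2,2,1,1)$, where $c_k<|\mathscr{D}^k|$: there a $\ast$-line reaches $C_k$, and the box $b(2)$ is right extremal in $\mathscr{D}^k$ yet is not joined to $C_k$, so the clean characterization fails.
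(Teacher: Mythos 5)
Your first inclusion is sound and is essentially the paper's own argument: a line from $b$ to $C_k$ carries a $1$, by \cite[5.4.8(vi)]{FJ2} it is the unique right going $1$-line at $b$, and by the decomposition of \cite[Lemma 5.4.9]{FJ2} it follows that $b$ carries no right going $1$-line inside $\mathscr D^k$, i.e.\ is right extremal there. Your reformulation of the converse is also correct: since the lines internal to $\mathscr D^k$ are unchanged, a right extremal box of $\mathscr D^k$ either joins $C_k$ or is right extremal in all of $\mathscr D$, so the whole content of the lemma is that \emph{every} box of $\mathscr D^k$ acquires a right going $1$-line in $\ell(\mathscr D)$.

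At exactly that point, however, the proposal stops proving and starts gesturing, and this is a genuine gap. The sentence ``there are always enough target rows'' is a counting heuristic, not an argument: having room in $C_k$ does not force the construction of \cite[Sect.\ 5]{FJ2} to emit a line from every right extremal box, and your statement ``it suffices to show that each right extremal box of $\mathscr D^k$ is the source of a line to $C_k$'' is circular --- that \emph{is} the assertion being proved. Deferring it to ``the explicit construction of $\ell(\mathscr D^k,C_k)$ in \cite[5.4.8]{FJ2}'' leaves out the one idea that makes the converse work. The paper supplies it with a specific device you never invoke: reduce (by independence of $c_k$) to $c_k=s+1$ where $s=|\mathscr D^k|$, adjoin a column $C_0$ of height $s+1$ on the \emph{left} to form an augmented diagram $\hat{\mathscr D}$, apply \cite[5.4.8(v)]{FJ2} --- in $\hat{\mathscr D}$ every box outside the last column has a right going line labelled $1$, with the sole possible exception of a line ending at the lowest box of $C_k(s+1)$, labelled $\ast$ --- and then use \cite[Lemma 5.4.10]{FJ2} to remove $C_0$ again, noting that this changes only that exceptional label from $\ast$ to $1$. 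Only after this bookkeeping does ``every box of $\mathscr D^k$ has a right going $1$-line in $\ell(\mathscr D)$'' become available, whereupon the converse follows exactly as you outline (a right extremal box's unique $1$-line cannot end in $\mathscr D^k$, so it ends in $C_k$). Your example $(2,2,1,1)$ correctly shows the hypothesis $c_k>|\mathscr D^k|$ is needed, but it does not substitute for the missing existence statement.
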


\begin {proof}
Suppose $b \in \mathscr D^k$ is joined by a line labelled by $1$ to a box in $C_k$.  Then by \cite [5.4.8(vi)]{FJ2} $b$ cannot be joined by a right going line labelled by a $1$ to a box in $\mathscr D^k$.

By \cite [5.4.9]{FJ2}, this means that $b$ is right extremal in $\mathscr D^k$.

For the converse, set $|\mathscr D^k|=s$ and suppose $c_k=s+1$.  Following \cite [6.1.1]{FJ2} we denote the last column of $\mathscr D$ by $C_k(s+1)$ to indicate its height, namely $s+1$.

 Adjoin a column $C_0$ of height $s+1$ to the left of $\mathscr D$ to obtain an ``augmented'' diagram $\hat{\mathscr D}$.  By \cite [Lemma 5.4.10]{FJ2}, this does not change the right going lines in $\mathscr D$ nor their labels, except that if there is a right going line $\ell$ to the box in $C_k(s+1)\cap R_{s+1}$ labelled (necessarily) by a $1$ in $\mathscr D$, then in $\hat{\mathscr D}$, this line is labelled by a $\ast$, or if not, a line in $R_{s+1}$ joining $C_0,C_k$ labelled by a $\ast$, is added.

  Through \cite [5.4.8(v)]{FJ2}, every box in $\hat{\mathscr D}\setminus C_k(s+1)$ has a right going line $\ell$, labelled by a $1$ with the possible exception that $\ell$ ends at the lowest box in $C_k(s+1)$, in which  case $l$ labelled by a $\ast$.

 Combining the observations of the last two paragraphs we conclude that every box $b \in \mathscr D^k$ has a right going line $\ell$ labelled by a $1$. Moreover $\ell$ is unique by \cite [5.4.8(vi)]{FJ2}.  If $b$ is right extremal in $\mathscr D^k$, then by definition $\ell$ does not meet $\mathscr D^k$ and so is right extremal.  Hence $\ell$ must meet $C_k$.  This gives the converse.
\end {proof}

\textbf{Remark.}  The proof of the converse means that $b$ cannot lie in $\mathscr D^k$ and also be right extremal in $\mathscr D$.  This is by virtue of the hypothesis, $c_k>|\mathscr D^k|$.

 \subsection {A Simple Case} \label {3.4}

  Suppose all columns in $\mathscr D$ have different heights. Then the construction of \cite [5.4]{FJ2} says that $\ell(\mathscr D)$ consists of all the horizontal lines joining adjacent boxes and are labelled by a $1$.

   Now for all $s \in [1,k]$, set $d_s:=\max_{s'>s}c_{s'}$.  Then the boxes in $C_s$ lying in rows $i\in [d_s+1,c_s]$ are exactly those which are  right extremal. Moreover there are $\height \mathscr D$ of them.

   This gives the following result.

   \begin {lemma}  Assume that all the columns in $\mathscr D^k$ have distinct heights.  Then $r_t$ is just the entry of the unique box of $\mathscr D^k$ with no right adjacent box in row $t$.
   \end {lemma}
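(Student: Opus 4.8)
The plan is to reduce the statement to the explicit shape of $\ell(\mathscr D)$ that is available under the distinct-heights hypothesis, and then to identify two a priori different notions — ``right extremal in $\mathscr D^k$'' and ``having no right adjacent box in $\mathscr D^k$'' — which coincide precisely in this case. Throughout we are in the situation of \ref{2.4} and Lemma \ref{3.3}, so $c_k>|\mathscr D^k|$; together with the hypothesis that the columns of $\mathscr D^k$ have distinct heights this forces all columns of $\mathscr D$ to have distinct heights, since $c_k$ exceeds every height in $\mathscr D^k$. Hence, as recalled at the start of \ref{3.4}, $\ell(\mathscr D)$ consists of all horizontal lines joining adjacent boxes, each labelled by a $1$; in particular every line that occurs is horizontal and joins two boxes in a common row.

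First I would translate the line-theoretic condition defining right extremality into the purely combinatorial condition appearing in the statement. Since the only lines present in $\ell(\mathscr D^k)$ are horizontal lines between adjacent boxes, a box $b\in\mathscr D^k$ carries a right going line labelled by a $1$ if and only if it has a right adjacent box in $\mathscr D^k$. Consequently $b$ is right extremal in $\mathscr D^k$ exactly when it has no right adjacent box there, i.e. when it is the rightmost box of its row; and in each row $t\in[1,|\mathscr D^k|]$ there is exactly one such box, the one lying in the rightmost column of height $\ge t$. This is the step where the distinct-heights hypothesis is essential.

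Next I would apply Lemma \ref{3.3}: the boxes of $\mathscr D^k$ joined to $C_k$ are exactly the right extremal ones, the joining line carrying a $1$. Because every such line is horizontal, the line issuing from the right extremal box in row $t$ remains in row $t$; and since $c_k>|\mathscr D^k|\ge t$, the column $C_k$ has a box in row $t$, so this horizontal line does reach $C_k$ and is precisely the line $\ell_t$ to the $t^{\mathrm{th}}$ row of $C_k$ of \ref{2.4}. Reading off the definition of the composition map, $r_t$ is the entry of the box from which $\ell_t$ starts, namely the rightmost box of $\mathscr D^k$ in row $t$ — the unique box with no right adjacent box in row $t$, as claimed. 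For $t>|\mathscr D^k|$ no box of $\mathscr D^k$ lies in row $t$, no horizontal line can reach that row of $C_k$, and so $r_t=0$, consistent with the convention of \ref{2.4}.

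The one genuinely delicate point is the identification carried out in the second step: the Example in \ref{3.2} shows that ``right extremal'' and ``no right adjacent box'' diverge in general, and their agreement here rests entirely on the distinct-heights hypothesis removing every non-horizontal line and every $\ast$-label from $\ell(\mathscr D)$. Once this is secured, the remainder is bookkeeping of rows, with the inequality $c_k>|\mathscr D^k|$ doing the sole work of guaranteeing that the relevant horizontal line actually crosses into $C_k$.
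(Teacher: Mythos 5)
Your proposal is correct and takes essentially the same route as the paper: the paper's own (very terse) argument in \ref{3.4} likewise observes that under the distinct-heights hypothesis $\ell(\mathscr D)$ consists precisely of the horizontal lines joining adjacent boxes, all labelled by a $1$, so that right extremality coincides with having no right adjacent box (the paper phrases this via $d_s:=\max_{s'>s}c_{s'}$), and then combines this with Lemma \ref{3.3} to read off $r_t$. You merely make explicit the bookkeeping the paper leaves implicit, notably that $c_k>|\mathscr D^k|$ extends the distinct-heights hypothesis from $\mathscr D^k$ to $\mathscr D$ and forces the line from the row-$t$ extremal box to land in row $t$ of $C_k$.
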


\subsection {Replacement} \label {3.5}


The general case is  a little more complicated.  It involves the replacement of those boxes with no right adjacent box.

\textit{Assume in Section \ref {3.5} that $c_k> |\mathscr D^k|$}.

\subsubsection {The Lower Part of a Column} \label {3.5.1}

Take $i\in [1,k-1]$ and set $c^i=\max_{j|k>j>i}c_j$.  If $c_i>c^i$, we say that $C_i$ is right extremal in $\mathscr D^k$.

We call $C_i^L:=C_i\setminus (C_i\cap R^{c^i})$, the $c^i$-lower part of $C_i$.

Fix $i \in [1,k-1]$, so that $C_i$ is right extremal in $\mathscr D^k$.

Set $b_0 =b_{c^i+1,i}$.

\subsubsection {Extremality}\label {3.5.2}

\begin {lemma}  Retain the above notation. A necessary and sufficient criterion for a box $b \in C_i^L\cap R_j:j>c^i$ to be right extremal in $\mathscr D^k$, is that there is no column $C'$ in $\mathscr D^k$ of height $j-1$ to the right of $C_i$ with a left neighbour $C$. When this criterion holds, $b$ is joined by a horizontal line $\ell$ to a box in $C_k\cap R_j$ with label $1$.
\end {lemma}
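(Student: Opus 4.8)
The plan is to prove this extremality criterion by leveraging the previous Lemma of \ref{3.3}, which already identifies the right extremal boxes of $\mathscr D^k$ as exactly those joined to $C_k$ by a line labelled $1$. This reduces the problem to characterizing, for a box $b\in C_i^L\cap R_j$ with $j>c^i$, precisely when $b$ fails to carry a right going line labelled by a $1$ within $\mathscr D^k$. First I would fix such a box $b$ and analyze the right going lines emanating from $b$, using the construction of $\ell(\mathscr D^k)$ from \cite[Sect. 5]{FJ2}. The key structural input is that a right going line carrying a $1$ from a box in row $R_j$ connects to an \emph{adjacent} box in $\mathscr D^k$, and by the conventions of \ref{2.1}--\ref{2.4} such a horizontal line can only terminate on a box in some column to the right of $C_i$ that reaches down to at least row $R_j$.

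The main structural claim to establish is the following equivalence: the box $b\in C_i^L\cap R_j$ admits a right going line labelled $1$ \emph{if and only if} there exists a column $C'$ in $\mathscr D^k$ to the right of $C_i$ of height exactly $j-1$ possessing a left neighbour $C$. The delicate direction is to see why the relevant receiving column must have height precisely $j-1$ rather than $\geq j-1$. Here I would invoke the column staircase formalism of \ref{3.2}: a line labelled $1$ reaching into row $R_j$ from the left is absorbed by the lowest box of a column of height $j-1$ whose left neighbour supplies the ``$\ast$'' on the horizontal line, consistent with the rightmost labelling \cite[4.2.2]{FJ1}. The existence of a left neighbour $C$ for $C'$ is exactly the condition under which the construction of \cite[5.4]{FJ2} routes a $1$-line (rather than a $\ast$-line) through $b$, by the Second Step analysis of \cite[4.2.1]{FJ1}. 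Absent such a $C'$, the line from $b$ cannot be captured inside $\mathscr D^k$ and $b$ is forced to be right extremal.

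Once this equivalence is in hand, the final sentence of the statement follows immediately from the Lemma of \ref{3.3}: since $c_k>|\mathscr D^k|$, right extremality of $b$ in $\mathscr D^k$ means $b$ is joined by a line (necessarily labelled $1$) to a box in $C_k$; and because the joining line is horizontal, it lands in $C_k\cap R_j$, as $b$ lies in $R_j$. I would close by noting that the uniqueness of the right going $1$-line through any box \cite[5.4.8(vi)]{FJ2} guarantees the criterion is unambiguous.

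The hard part will be pinning down the height constraint ``$j-1$'' cleanly, since the flexibility in the positioning of the left neighbours $C$ (emphasized in the \textbf{Attention} paragraph of \ref{3.2}) means one cannot argue purely from left-to-right column positions; one must argue from the row-by-row inductive construction of \cite[5.4]{FJ2} and the behaviour of $\ast$-labels on horizontal lines. I expect the verification that no taller column to the right can intercept the $1$-line before it reaches a height-$(j-1)$ column with a left neighbour to be the point requiring the most care, drawing on \cite[5.4.8(v)]{FJ2} and \cite[Lemma 5.4.9]{FJ2} for the compatibility of the line structure under column removal.
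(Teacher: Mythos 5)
Your overall skeleton (reduce via Lemma \ref{3.3} to deciding when $b$ carries a right going $1$-line inside $\mathscr D^k$, then characterize when such a line exists) matches the paper's, but two essential steps are missing or wrong. First, the equivalence itself: the paper pins down the height constraint not by any ``absorption'' or adjacency principle but by citing \cite[5.4.8(iii),(ix)]{FJ2}, which says a right going line can be up-going by at most one row (and is then labelled by a $1$). Since every column of $\mathscr D^k$ strictly to the right of $C_i$ has height $\leq c^i<j$, any right going line from $b\in R_j$ into $\mathscr D^k$ must be up-going by at least $j-c^i$ rows; so such a line can exist only when $j=c^i+1$, and its end point is then the lowest box of a column of height exactly $j-1$, which by the pair joining $(b''_j,b_{u'_j})$ of \cite[5.4.7(i)]{FJ2} must have a left neighbour and, by the minimal distance criterion of \cite[5.4.8]{FJ2}, be the top of the leftmost column staircase of depth $c^i$ to the right of $C_i$. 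Your substitute claim that a right going $1$-line ``connects to an adjacent box'' is false in general (lines in $\ell(\mathscr D)$ routinely hop over columns, cf.\ \ref{5.5.3}), and the difficulty you flag --- taller columns intercepting the line --- is moot: by the definition of $c^i$ there are no columns of height $\geq j$ strictly between $C_i$ and $C_k$, so the genuine issue is ruling out lines that are up-going by \emph{more} than one row, which only the cited facts from \cite{FJ2} accomplish.

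Second, and more seriously, the final assertion of the lemma --- that the line $\ell$ from $b$ to $C_k$ is \emph{horizontal} --- is used in your proof (``because the joining line is horizontal'') but never established; Lemma \ref{3.3} gives only the existence of a line to $C_k$, with no control on its slope, so your conclusion that it lands in $C_k\cap R_j$ is circular. The paper needs a genuine two-part argument here: $\ell$ cannot be up-going because, by \cite[5.4.8(ix)]{FJ2}, the only left going line from $C_k$ with label $\ast$ starts at $C_k\cap R_{c_k}$ and by hypothesis $c_k>\height C_i$; and $\ell$ cannot be strictly down-going because $C_k$ is the only column of height $>c^i$ to the right of $C_i$ and it has no left neighbour (as $c_k>|\mathscr D^k|$), so there is no right going $\ast$-line from $b$ and hence no strictly down-going $1$-line arising from the pair joining of \cite[5.4.7(i)]{FJ2}. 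Without supplying this argument, the stated conclusion of the lemma remains unproved.
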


\begin {proof} Recall (\ref {2.1}) that we have reversed our convention on up/down going to that used in \cite {FJ2}.

By \cite [5.4.8(iii),(ix)]{FJ2} a right going line $\ell$ can only be up-going by one row and if so is labelled by a $1$.   Thus the criterion is not satisfied except if $j=c_i=c^i+1$.

Moreover $\ell$ obtains from the joining of the pair $(b''_j,b_{u'_j})$ in \cite [5.4.7(i)]{FJ2}, and in this $b_{u'_j}$ lies in the lowest row of a column $C'_c$ of height $c:=c_i-1$ and has a left neighbour $C_c$. Then $C'_c$ is the columns of greatest height of a column staircase $\mathscr S^c$ of depth $c$. Moreover by the minimal distance criterion of \cite [5.4.8]{FJ2}, $\ell$ exists exactly when $\mathscr S^c$ is the leftmost column staircase of depth $c$ to the right of $C_i$ - (recall Definition \ref {3.2}).

When such a pair does not exist, then $b$ is extremal and so by Lemma \ref {3.3} there is line $\ell$ to $C_k$, which cannot be up-going by \cite [5.4.8(ix)]{FJ2}, because the only left going line from $C_k$ with label $\ast$ starts at $C_k\cap R_{c_k}$ and by hypothesis $c_k>\height C_i$.

Finally $C_k$ is  the only column of height $> c$ to the right of $C_i$  and this has no left neighbour since $c_k>|\mathscr D^k|$. Thus there can be no right going line from $b$ labelled by a $\ast$. Consequently there can be no right and strictly down-going line from $b$ obtained by the joining of the pair $(b_{u_j},b''_j)$ in \cite [5.4.7(i)]{FJ2}.  Thus $l$ is horizontal.
\end {proof}

\subsubsection {Extremality}\label {3.5.3}

Retain the above hypotheses and notation.  Set $b_0:=b_{c^i+1,i}$. By the above lemma, it is the only box in $C_i^L$ which might not be right extremal and this only when $c:=c^i=c_i-1$.

\begin {prop} Assume that $c_k> |\mathscr D^k|$. If  $b_0$ is not right extremal in $\mathscr D^k$, then it is joined by a right going line labelled by $1$ to a box $b_1'\in R_{c^i}$.

Moreover $b_1'$ is joined by a line labelled by a $\ast$ to a box $b_1$ which is some $b(\mathscr S^{c^i})$.

If $b_1$ is not right extremal, it is joined by a right going line labelled by $1$ to a box $b_2'\in R_{c^i}$, which in turn is joined by a left going line labelled by a $\ast$ to a box $b_2$ which is some new $b(\mathscr S^{c^i})$.

This  process continues until $\mathscr S^{c^i}$ is a right extremal column staircase in $\mathscr D^k$ and (equivalently) $b_m= b(\mathscr S^{c^i})$ is right extremal in $\mathscr D^k$.

Finally in $\mathscr D$, $b_m$ is joined to $b_{c^i+1,k}$ by a line labelled by a $1$, which is hence right and strictly down-going.

\end {prop}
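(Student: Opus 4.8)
The plan is to treat the displayed alternation $b_0\to b_1'\to b_1\to b_2'\to\cdots$ as a single \emph{replacement step} iterated until right extremality is reached, extracting each individual move from the line-joining rules of \cite[Sect.~5]{FJ2} already used in Lemma~\ref{3.5.2}, and only at the very end invoking Lemma~\ref{3.3} to pass to $C_k$.

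First I would dispose of the opening move, which is essentially contained in the proof of Lemma~\ref{3.5.2}. Because $c_k>|\mathscr D^k|$ and $C_i$ is right extremal, $b_0=b_{c^i+1,i}$ is the unique box of $C_i^L$ that can fail to be right extremal, and then only when $c:=c^i=c_i-1$. If it does fail, the criterion of Lemma~\ref{3.5.2} furnishes a column $C'$ of height $c^i$ to the right of $C_i$ carrying a left neighbour; by \cite[5.4.8(iii),(ix)]{FJ2} the resulting right going $1$-line is up-going by exactly one row, so it lands in $R_{c^i}$ at the lowest box $b_1'$ of the tallest column $C'_{c^i}$ of a depth-$c^i$ staircase $\mathscr S^{c^i}$, and by the minimal distance criterion \cite[5.4.8]{FJ2} this $\mathscr S^{c^i}$ is the leftmost such staircase to the right of $C_i$. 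For the $\ast$-move, the claim that $b_1'$ is joined to $b_1:=b(\mathscr S^{c^i})$ by a $\ast$-line is exactly the placement of the $\ast$ produced by the Second Step under the rightmost labelling \cite[4.2.1, 4.2.2]{FJ1} (compare the \textbf{Attention} remark of \ref{3.2}), read off from the pair $(b''_j,b_{u'_j})$ of \cite[5.4.7(i)]{FJ2}: the $\ast$ sits on the left going line issuing from $b_1'$ and terminating at the box left adjacent to the lowest box of the shortest column of $\mathscr S^{c^i}$, which is $b(\mathscr S^{c^i})$ by definition.

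The heart of the matter is that the replacement step must repeat verbatim with $b_1=b(\mathscr S^{c^i})$ in place of $b_0$, and this is where I expect the main obstacle. One cannot simply quote Lemma~\ref{3.5.2}, since $b_1$ lies in a row $u_1\le c^i$ rather than in $C_i^L$, so its outgoing $1$-line need no longer be up-going but may be horizontal or (weakly) down-going. The correct uniform statement is that a box $b(\mathscr S^{c^i})$ which is not right extremal is joined by a right going $1$-line to the lowest box $b_2'$ of the tallest column of the leftmost depth-$c^i$ staircase lying to its right, again produced by a pair of \cite[5.4.7(i)]{FJ2} selected by the minimal distance criterion \cite[5.4.8]{FJ2}, whereupon the previous paragraph supplies the $\ast$-line to the new $b_2=b(\mathscr S^{c^i})$. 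Verifying this step uniformly for every $b(\mathscr S^{c^i})$ — that the relevant pair exists, that its label is $1$ and not $\ast$ (using \cite[5.4.8(vi)]{FJ2}, that a box carries at most one right going $1$-line), and that no competing up-going or strictly down-going line intervenes — is where the bulk of the combinatorial bookkeeping of \cite[5.4.7, 5.4.8]{FJ2} must be spent.

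Termination and the final move should then be comparatively soft. Termination follows since $\mathscr D^k$ has only finitely many depth-$c^i$ staircases and the minimal distance criterion \cite[5.4.8]{FJ2} prevents any from recurring in the chain; hence after finitely many steps $\mathscr S^{c^i}$ is right extremal, which by the single-step criterion of the previous paragraph is equivalent to $b_m=b(\mathscr S^{c^i})$ being right extremal. Lemma~\ref{3.3}, which uses $c_k>|\mathscr D^k|$, then gives that $b_m$ is joined to $C_k$ by a $1$-line. To pin the target row, note that $b_0$ lies in $R_{c^i+1}$ and would, were it extremal, fill $C_k\cap R_{c^i+1}$ by Lemma~\ref{3.5.2}; being non-extremal it is replaced along the chain by $b_m$, so the down-going line arising from the pair $(b_{u_j},b''_j)$ of \cite[5.4.7(i)]{FJ2} returns precisely the row $c^i+1$. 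Finally, since $b_m$ lies in a row $u\le c^i$ whereas its target $b_{c^i+1,k}$ lies in $R_{c^i+1}$ with $c_k>|\mathscr D^k|\ge c^i+1$, the line runs from $R_u$ to $R_{c^i+1}$ with $u<c^i+1$, hence is right and strictly down-going by the convention of \ref{2.1}, as required.
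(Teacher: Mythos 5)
Your opening move is the paper's: the right going $1$-line from $b_0$ landing in $R_{c^i}$ comes from \cite[5.4.8(ix)]{FJ2} exactly as in Lemma~\ref{3.5.2}. But already your second step papers over a real argument. The $\ast$-line issuing from $b_1'$ a priori ends at \emph{some} box $b_{u_j}\in R_u$ with $u\leq c^i$; that this box is the distinguished box $b(\mathscr S^{c^i})$ of a column staircase is not ``by definition'' a feature of the Second Step labelling. The paper proves it by the surrounding-pairs induction of \cite[6.6.2]{FJ2}: the neighbouring pair $C_{c},C'_{c}$ surrounds a pair of height $c-1$, which surrounds a pair of height $c-2$, and so on, until the columns $C'_j:j=u,\ldots,c$ are seen to form a staircase $\mathscr S^c_u$ with $b_1=b(\mathscr S^c_u)$. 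You assert the conclusion of that induction without running it.

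The two genuine gaps are the iteration and the last claim. For the iteration you propose that the replacement step ``repeats verbatim'' via pairs of \cite[5.4.7(i)]{FJ2} selected by the minimal distance criterion, and you concede the verification is where the bulk of the work lies --- but you never do it, and in fact it is \emph{not} a verbatim repetition: $b_1$ sits in $R^{c^i}$, not in the lower part of a right extremal column, so its outgoing $1$-line does not arise from \cite[5.4.8(iii),(ix)]{FJ2} at all; in the paper it comes from a different rule, the joining of loose ends \cite[5.4.7(ii)]{FJ2}, and the fact that $b_2'$ lies in $R_{c^i}$ comes from up-going linkage \cite[5.4.3]{FJ2}. For the final claim, Lemma~\ref{3.3} only gives a $1$-line from $b_m$ to \emph{some} box of $C_k$; your argument that the target row is $c^i+1$ because $b_m$ ``replaces'' $b_0$ is circular, since replacement is precisely the notion this proposition is constructing (Definition~\ref{3.5.4}), and nothing you have said rules out the line landing in a row $\leq c^i$. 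The paper pins the row by a separate device: adjoin if necessary a column $C_0$ of height $c^i+1$, take $\height C_k=c^i+1$ first, so that $(C_0,C_k)$ surround the extremal staircase in the sense of \cite[6.6.2]{FJ2} and produce a line from $b(\mathscr S^{c^i})$ to $b_{c^i+1,k}$ labelled $\ast$; then increase $\height C_k$ one step at a time, the label $\ast$ turning into a $1$ by \cite[3.4.2(ii)]{FJ3} and thereafter persisting by \cite[3.4.2(i)]{FJ3}, with \cite[5.4.10]{FJ2} allowing $C_0$ to be removed at the end. That height-induction argument, absent from your proposal, is what actually delivers both the row $c^i+1$ and the label $1$.
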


\begin {proof}

The first assertion follows from \cite [5.4.8(ix)]{FJ2}, as explained in Lemma \ref {3.5.2}.  Here $b'_1=b_{u'_j}$ and lies at the bottom of a column $C'_c$ with left neighbour $C_c$ and is joined by a left going line labelled a $\ast$  to a box $b_1\in R_u:u \leq c$ (which is $b_{u_j}$ in the notation of \cite [5.4.6]{FJ2}).

  In addition, by adjacency \cite [5.4.3]{FJ2}, there is no column of height $\geq c$ (strictly) between $b_1,b_1'$. (However it can happen that $C_{c}$ lies strictly to the left of $C_i$.)

  Now in the language of \cite [6.6.2]{FJ2} suppose $C_{c},C'_{c}$ surround a second pair of neighbouring columns $C_j,C_j'$.  This means these columns have height $j=c-1$ and there are no columns of height $\geq c-1$ strictly between $C'_{c-1},C'_{c}$. Moreover by \cite [6.6.2, sixth paragraph]{FJ2} this process continues until for some $u\leq c$, the $C'_j:j=u,u+1,\ldots,c$, form a column staircase $\mathscr S^c_u$, with $b_1=b(\mathscr S^c_u)$.

   In particular $b_1$ must lie to the right of $C_i$ (though not necessarily strictly).  Then if $b_1$ is itself \textit{not} right extremal it must in turn be joined to some $b_2'$ in a column strictly to the right of $C_i$ by a line labelled by a $1$, through the joining of loose ends \cite [5.4.7(ii)]{FJ2}.  Moreover by up-going linkage \cite [5.4.3]{FJ2} one obtains $b_2' \in R_{c}$.

 This process continues with the right hand member of the pair $b_j,b_j':j=1,2,\ldots,m$ lying in $R_{c}$, until the left hand member $b_m$ lying in $R^{c}$ is right extremal in $\mathscr D^k$ and  must therefore be joined to a box in the last column $C_k$ of $\mathscr D$, by a line $\ell$ labelled by a $1$.

For the last part of the proposition, suppose first that $\mathscr D^k$ admits a column $C_0$ of height $c+1$.  Take the one nearest to the right.  By the choice of $c$, it must lie to the left of $C_i$ possibly $C_i$ itself. For the moment take $C_k$ to also have height $c+1$.  Then it is a right neighbour to $C_0$ and the resulting pair surrounds $\mathscr S^c$ in the sense of \cite [6.6.2]{FJ2}.  This extends in $\mathscr D$ the extremal column staircase $\mathscr S^c$ by one column and so there is a line joining $b(\mathscr S^c)$ and $b_{c+1,k}$ labelled by a $\ast$.  When the height of $C_k$ is increased by one, this line remains \cite [3.4.2(ii)]{FJ3}, but the $\ast$ is replaced by a $1$. When $C_k$ is further increased by one, this line and its label remain \cite [3.4.2(i)]{FJ3}.

If $\mathscr D^k$ does not admits a column $C_0$ of height $c+1$, adjoin one on its left.  Then a similar argument applies to the augmented diagram.  Moreover by \cite [5.4.10]{FJ2}, $C_0$ may be removed (because $\height C_k>c+1$) without changing the lines joining $\mathscr D^k$ to $C_k$.  This proves the last part.
 \end {proof}

\subsubsection {Definition}\label {3.5.4}

\

   Notice that in the proof of Preposition \ref {3.5.3} that $b_i$ has been replaced  successively by $b_{i+1}$, for all $i=0,1.\ldots,m-1$.

   \textbf{Definition}. We call $b_m$ of the proposition the replacement of $b_0$.

 \

  Recall that $c_k> |\mathscr D^k|$.  We have shown that a line from an extremal box in $\mathscr D^k$ to $C_k$ is down-going.  Yet if  $c_k< |\mathscr D^k|$, we shall see that this may fail (Lemma \ref {5.3.2}) but only slightly.

 \subsection {Description of the Right Extremal Boxes} \label {3.6}

 Retain the above notation.  Combining Lemmas \ref {3.3}, \ref {3.4} and Proposition \ref {3.5} we obtain

 \begin {thm}

 \

 $(i)$. Take $c \leq |\mathscr D^k|$.  Suppose $\mathscr S^c$ is a right extremal column staircase. Then $b(\mathscr S^c)$ is right extremal in $\mathscr D^k$.

  \

  $(ii)$.  The remaining right extremal boxes in $\mathscr D^k$ are those belonging to the $c$-lower part $C_i^L:=(R\setminus R^c) \cap C_i$ of each right extremal column $C_i$ except possibly that in the top row $R_{c+1}$ of $C_i^L$ and this exactly when there is a right extremal column staircase $\mathscr S^c$ (necessarily to the right of $C_i$).

 \end {thm}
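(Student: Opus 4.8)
The theorem is essentially a bookkeeping summary of the three preceding results, so the plan is to assemble them carefully rather than to introduce new machinery. First I would recall the governing hypothesis $c_k > |\mathscr D^k|$ (in force since \ref{3.5}), which by Lemma \ref{3.3} guarantees that \emph{every} box joined to $C_k$ is right extremal in $\mathscr D^k$, and conversely that the right extremal boxes of $\mathscr D^k$ are exactly those carrying a $1$-labelled line into $C_k$. Thus proving the theorem reduces to \emph{enumerating} the right extremal boxes of $\mathscr D^k$ and sorting them into two families: those arising from column staircases (part $(i)$) and those arising from the lower parts of right extremal columns (part $(ii)$).

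For part $(i)$ I would argue as follows. Let $\mathscr S^c$ be a right extremal column staircase, so by Definition \ref{3.2} there are no columns of height $\geq c$ to the right of its top column $C'_{i_c}$. The box $b(\mathscr S^c)$ is, by the \textbf{Notation} paragraph of \ref{3.2}, the box left adjacent to the lowest box of $C'_{i_u}$, lying in row $R_u$. The content of Proposition \ref{3.5} is precisely that the ``replacement'' process terminates at a right extremal column staircase, and its penultimate paragraph states the equivalence that $b_m = b(\mathscr S^c)$ is right extremal in $\mathscr D^k$ exactly when $\mathscr S^c$ is right extremal. So part $(i)$ is a direct citation of that equivalence; the only thing to verify is that the right extremality of $\mathscr S^c$ as a column staircase matches the definition used in the Proposition, which it does by construction.

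For part $(ii)$ I would fix a right extremal column $C_i$ (meaning $c_i > c^i$ in the notation of \ref{3.5.1}) and analyze its $c^i$-lower part $C_i^L$. Lemma \ref{3.5.2} gives the necessary and sufficient criterion for a box $b \in C_i^L \cap R_j$ with $j > c^i$ to be right extremal, namely the \emph{absence} of a column of height $j-1$ to the right of $C_i$ possessing a left neighbour; moreover that lemma shows the criterion can fail only when $j = c_i = c^i + 1$, i.e.\ only for the \emph{top} box $b_0 = b_{c^i+1,i}$ of $C_i^L$. So every box of $C_i^L$ strictly below the top row is automatically right extremal, and the top box $b_0$ is right extremal \emph{unless} it admits the replacement described in \ref{3.5.3}–\ref{3.5.4}, which by Proposition \ref{3.5} happens exactly when there is a right extremal column staircase $\mathscr S^{c}$ (with $c = c^i$) to the right of $C_i$. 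This is precisely the exceptional clause in the statement of $(ii)$, so the two cases dovetail.

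The main obstacle I anticipate is \emph{not} any single deduction but ensuring the two families partition the right extremal boxes without overlap or omission — in particular reconciling the bookkeeping so that the box $b_0$ which is ``lost'' from $C_i^L$ in part $(ii)$ is exactly compensated by the box $b(\mathscr S^c)$ counted in part $(i)$. I would therefore close the argument with a short counting/matching remark: whenever the exceptional case of $(ii)$ occurs, the replacement chain of Proposition \ref{3.5} sets up a bijection sending $b_0$ to its replacement $b_m = b(\mathscr S^{c^i})$, so no right extremal box is counted twice and none is missed. The \textbf{Remark} after Lemma \ref{3.3}, that a box cannot lie in $\mathscr D^k$ and be right extremal in the full $\mathscr D$, is what legitimizes treating ``right extremal'' unambiguously throughout, and I would invoke it to rule out any spurious extremal boxes outside these two families.
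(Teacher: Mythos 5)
Your overall strategy --- assembling Lemma \ref{3.3}, Lemma \ref{3.5.2} and Proposition \ref{3.5.3} --- is exactly the paper's route (its proof is the one-line ``combining Lemmas \ref{3.3}, \ref{3.4} and Proposition \ref{3.5}''), and your treatment of part $(ii)$ is essentially sound. The genuine gap is in part $(i)$. You reduce it to a ``direct citation'' of the equivalence at the end of Proposition \ref{3.5.3}, but that equivalence is asserted only for the staircase reached as the \emph{terminus of a replacement chain}, i.e.\ one started from a box $b_0=b_{c^i+1,i}$ of a right extremal column $C_i$ with $c^i=c$ which fails to be right extremal. Such a chain exists precisely when $\mathscr D^k$ contains a column of height exceeding $c$, that is when $c<|\mathscr D^k|$. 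The theorem, however, explicitly allows $c=|\mathscr D^k|$, and in that case no column of height $c+1$ exists, no box $b_0$ exists, and no chain exists, so your citation produces nothing. This is not a marginal case: it is the very situation stressed in the Remark following the theorem (every $c$-lower part is empty, yet a right extremal staircase $\mathscr S^c$ can exist and $r_{c+1}\neq 0$). Concretely, for $\mathscr D^k$ given by $(1,1)$ (the paper's example, the composition $(1,1,2)$) the staircase $\{C_2\}$ is right extremal and part $(i)$ must yield that $b(1)$ is right extremal in $\mathscr D^k$; your argument never reaches this box, and this case is precisely what makes the entry $r_{|\mathscr D^k|+1}$ of the composition map nonzero in Corollary \ref{4.1}.

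What covers \emph{all} right extremal staircases is not the chain equivalence but the argument proving the last assertion of Proposition \ref{3.5.3}: surround $\mathscr S^c$ by a pair of neighbouring columns of height $c+1$, adjoining a column $C_0$ of height $c+1$ on the left of the diagram if none exists (removable afterwards by \cite[Lemma 5.4.10]{FJ2}); this produces a line from $b(\mathscr S^c)$ to $C_k\cap R_{c+1}$ labelled by a $1$, and uniqueness of right going lines labelled by a $1$ \cite[5.4.8(vi)]{FJ2} then forbids any such line from $b(\mathscr S^c)$ into $\mathscr D^k$, which is exactly right extremality in $\mathscr D^k$. Your proof should rest part $(i)$ on that chain-independent argument. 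A lesser point: your closing ``bijection'' remark only rules out double counting between the two families; the exhaustiveness implicit in the word ``remaining'' in $(ii)$ (that no box outside the two families is right extremal) is not supplied by the Remark after Lemma \ref{3.3}, which merely compares extremality in $\mathscr D$ with extremality in $\mathscr D^k$ and says nothing about where lines into $C_k$ can originate.
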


 \textbf{Remarks.}  Retain the above notation.  Paradoxically a box in $C_i\setminus C_i^U$ \textit{can} be right extremal in $\mathscr D^k$, because it can be some $b(\mathscr S)$ with $\mathscr S$ being a right extremal column staircase.  For example, let $\mathscr D$ be given by the composition $(1,2,1,3)$.  Then $b(2)$ is right extremal in $\mathscr D^4$.

 If $c=|\mathscr  D^k|$, then the $c$-lower part of every column of $\mathscr D^k$ is empty.  Nevertheless there can be a right extremal column staircase $\mathscr S^c$ and so $r_{c+1}$ can be non-zero.  For example this is true when $\mathscr D$ is given by the composition $(1,1,2)$.


  \subsection {Examples} \label {3.7}

 Recall the notation of  \ref {2.1}.

  Suppose $\mathscr D^k$ is given by the composition $(2,1,1)$. Then $b(2)$ is not right extremal but it is replaced by $b(3)$ which is right extremal.

  A more exotic example occurs when $\mathscr D^k$ is the composition $(3,1,2,1,2)$.  Here $b(3)$ is not right extremal but is replaced by $b(5)$ which is right extremal.  Notice that the columns $C_4,C_5$ form a column staircase which is right extremal  with $b(5)$ its right extremal box.

  When we adjoin two further columns of heights $1,2$, then $C_4,C_5$ form a column staircase but which is no longer right extremal.  Rather $C_6,C_7$ form a column staircase which is right extremal  with $b(8)$ its right extremal box.

  Such a sequence of column staircases can be repeated indefinitely.

  \section {The Composition Tableau} \label {4}

   \subsection {Describing the Composition map} \label{4.1}

   Assume that $c_k>|\mathscr D^k|$.  For all $t \in [\mathscr D^k]$, let $C^t$ be the unique rightmost column of $\mathscr D^k$ of height $\geq t$. We may summarize the conclusions of \ref {3.4}-\ref {3.6} as follows.

   \begin {cor}

   The image $r_t$ of $t$ under the composition map is the entry $b(\mathscr S)$, for some right extremal column staircase $\mathscr S$, or of a box in $R_t\cap C^t$, if this box is right extremal.

    If $t\geq |\mathscr D^k|+1$, then $r_t=0$, except when equality holds. Then either $r_t=0$ or is again the entry of some $b(\mathscr S)$ with $\mathscr S$, a right extremal column staircase.

%

    \end {cor}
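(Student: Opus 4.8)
The plan is to deduce the Corollary from the classification of right extremal boxes in Theorem \ref{3.6}, after pinning down precisely which row of $C_k$ each line labelled by a $1$ reaches. Since $c_k>|\mathscr D^k|$, Lemma \ref{3.3} identifies the boxes of $\mathscr D^k$ joined to $C_k$ (all by lines labelled $1$) with the right extremal boxes of $\mathscr D^k$; and as recalled in \ref{2.4} there is at most one line to each row $R_t$ of $C_k$, its starting entry being $r_t$, with $r_t=0$ when no such line exists. Hence it suffices to attach to each right extremal box, as listed in Theorem \ref{3.6}, the row of $C_k$ that its line hits.

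First I would handle the boxes of type $(ii)$, those in the lower part $C_i^L$ of a right extremal column $C_i$. For such a box $b\in C_i^L\cap R_t$ one has $t>c^i$, and since every column strictly to the right of $C_i$ in $\mathscr D^k$ has height at most $c^i<t$, the column $C_i$ is the rightmost column of height $\geq t$; that is, $C_i=C^t$ and $b=R_t\cap C^t$. By Lemma \ref{3.5.2} the line from $b$ is horizontal, so it reaches $R_t\cap C_k$ and yields $r_t=$ the entry of $R_t\cap C^t$. Conversely, if $R_t\cap C^t$ is right extremal then it is a lower part box of the right extremal column $C^t$, so this accounts exactly for the second alternative of the first assertion.

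Next I would handle the boxes of type $(i)$, the boxes $b(\mathscr S^c)$ attached to a right extremal column staircase $\mathscr S^c$ of depth $c$. The final assertion of Proposition \ref{3.5} shows that in $\mathscr D$ such a box is joined to $b_{c+1,k}$ by a strictly down-going line labelled $1$, so its line reaches $R_{c+1}\cap C_k$ and yields $r_{c+1}=$ the entry of $b(\mathscr S^c)$; this is the first alternative, with $t=c+1$. The choice between the two alternatives is then read off directly from Theorem \ref{3.6}$(ii)$: the top box of the lower part, namely $R_t\cap C^t$ with $t=c^i+1$, fails to be right extremal precisely when there is a right extremal staircase $\mathscr S^{t-1}$ to the right of $C^t$, in which case $b_0=R_t\cap C^t$ is replaced by $b(\mathscr S^{t-1})$ in the sense of \ref{3.5.4}; for every other row $R_t\cap C^t$ is itself right extremal. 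Combining the two cases proves the first assertion.

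Finally the range statement follows by tracking the rows reached. A horizontal line lands in a row $t\leq |\mathscr D^k|$, whereas a staircase of depth $c\leq |\mathscr D^k|$ lands in row $c+1\leq |\mathscr D^k|+1$. Thus for $t>|\mathscr D^k|+1$ neither type of line can occur and $r_t=0$, while for $t=|\mathscr D^k|+1$ only a depth-$|\mathscr D^k|$ staircase can contribute, giving $r_t=$ the entry of some $b(\mathscr S^{|\mathscr D^k|})$ when such a right extremal staircase exists and $r_t=0$ otherwise. I expect the only delicate points to be the identification $C_i=C^t$ for lower part boxes and the row bookkeeping ($R_t$ for horizontal lines against $R_{c+1}$ for staircases); with these settled the Corollary is a transcription of Theorem \ref{3.6}.
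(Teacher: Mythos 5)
Your proposal is correct and takes essentially the same route as the paper: the paper offers no separate proof, presenting the Corollary merely as a summary of \ref{3.4}--\ref{3.6}, and your argument assembles exactly those ingredients (Lemma \ref{3.3} to identify boxes joined to $C_k$ with right extremal boxes, Theorem \ref{3.6} for their classification, Lemma \ref{3.5.2} for horizontal landing of lower-part lines, and the final assertion of Proposition \ref{3.5.3} for staircase lines landing in row $c+1$). You merely make explicit the details the paper leaves implicit, namely the identification $C_i=C^t$ and the row bookkeeping giving the range statement at $t=|\mathscr D^k|+1$.
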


    \subsection {The Last Column} \label{4.2}

    The composition tableau $\mathscr T'$ is defined as follows.

    Let $\mathscr D'$ be the diagram with $k$ columns and arbitrarily many rows going downwards starting from $R_1$ (but see also below).

    Insert $r_t$ into the $t^{th}$ row and the rightmost column $C_k$ of $\mathscr D'$.  Now suppress the rightmost column of $\mathscr T$ and repeat the process to define the $(k-1)^{th}$ column of $\mathscr T'$.  This eventually gives to $\mathscr T'$.  We may regard a zero entry of $\mathscr T'$ to be no entry at all and then a postiori take $\mathscr D'$ to be the shape of $\mathscr T'$.  Through \ref {2.4} we may recover the canonical Weierstrass section $e+V$ from $\mathscr T'$.  The latter has the same entries as $\mathscr T$ in $\mathscr D$ and the same labelling of columns, but some entries are repeated in $\mathscr D'$.  In other words $\mathscr T$ is a sub-tableau of $\mathscr T'$.





  \subsection {Preservation of Extremality} \label{4.3}

  The following is an immediate consequence of \cite [Lemma 5.4.9]{FJ2}.

  \begin {lemma}  If $b\in \mathscr D_k$ is a right extremal in $\mathscr D$, then it is a right extremal in $\mathscr D_k$.
  \end {lemma}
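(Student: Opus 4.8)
The plan is to read off the result from the line decomposition recalled in \ref{2.3}, namely \cite [Lemma 5.4.9]{FJ2}: the family $\ell(\mathscr D)$ is the union of $\ell(\mathscr D^k)$ and the set $\ell(\mathscr D^k,C_k)$ of lines joining $\mathscr D^k$ to the last column $C_k$, with all labels preserved. The key point I would extract from this is that the lines lying entirely within $\mathscr D^k$, together with their labels, are identical whether one computes them in $\mathscr D$ or in $\mathscr D^k$; adjoining the column $C_k$ can only add lines of the second type, all of which terminate in $C_k$.

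First I would note that, since $b\in\mathscr D^k$, any right going line from $b$ occurring in $\ell(\mathscr D)$ must terminate either inside $\mathscr D^k$, in which case it belongs to $\ell(\mathscr D^k)$, or in $C_k$, in which case it belongs to $\ell(\mathscr D^k,C_k)$. The hypothesis that $b$ is right extremal in $\mathscr D$ says precisely that no right going line from $b$ in $\ell(\mathscr D)$ carries a $1$. Restricting attention to the lines of the first type, this forces that no line of $\ell(\mathscr D^k)$ issuing from $b$ carries a $1$. By \cite [Lemma 5.4.9]{FJ2} these lines of $\ell(\mathscr D^k)$, with their labels, are exactly the lines of $\mathscr D^k$ through $b$, so $b$ carries no right going line labelled by a $1$ in $\mathscr D^k$; that is, $b$ is right extremal in $\mathscr D^k$.

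There is essentially no obstacle here: the entire force of the statement is carried by \cite [Lemma 5.4.9]{FJ2}, which guarantees the stability of the internal lines and of their labels under the adjunction of $C_k$. I would stress, however, that only this one implication holds. The converse fails, since a box that is right extremal in $\mathscr D^k$ may acquire a line to $C_k$ (necessarily labelled by a $1$, and right and strictly down-going when $c_k>|\mathscr D^k|$ by Lemma \ref{3.3} and Proposition \ref{3.5}) and thereby cease to be right extremal in $\mathscr D$; compare the Remark after the proof in \ref{3.3}. It is exactly this asymmetry that makes the one-directional statement the one needed for the inductive reconstruction of the composition map.
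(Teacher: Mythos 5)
Your proof is correct and follows essentially the same route as the paper, which simply declares the lemma an immediate consequence of \cite[Lemma 5.4.9]{FJ2}: since $\ell(\mathscr D)$ is the label-preserving union of $\ell(\mathscr D^k)$ and the lines joining $\mathscr D^k$ to $C_k$, a right going line from $b$ labelled $1$ in $\ell(\mathscr D^k)$ would persist in $\ell(\mathscr D)$, contradicting extremality in $\mathscr D$. Your added remark on the failure of the converse matches the paper's own Remark following the lemma.
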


  \textbf{Remark.}  The converse fails, for example if $c_k>|\mathscr D^k|+1$.


  \subsection {Reconstruction} \label{4.4}

   \begin {cor}  If a box with label $m$ occurs in column $C_u$ of $\mathscr T$ and in column $C_v$ of $\mathscr T'$, then it must also occur in columns $C_i:i \in [u,v]$ of $\mathscr T'$.
   \end {cor}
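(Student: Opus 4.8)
The plan is to pin down, for a fixed label $m$, the exact set of columns of $\mathscr T'$ in which $m$ appears, and to show this set is the integer interval $[u,v]$. Write $b(m)$ for the unique box of $\mathscr D$ carrying $m$ in $\mathscr T$, so that $b(m)$ lies in column $C_u$. Since $\mathscr T$ is a sub-tableau of $\mathscr T'$ (as recorded in \ref{4.2}), the label $m$ occurs in column $C_u$ of $\mathscr T'$ in its natural position; this is the \emph{leftmost} occurrence, because every further occurrence is produced by the composition map and therefore sits strictly to the right of $C_u$. In particular $u\leq v$, and if $v=u$ the assertion is vacuous, so I would assume $v>u$ throughout.

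First I would record the combinatorial meaning of the columns of $\mathscr T'$. By the construction of \ref{4.2}, the entries placed in column $C_j$ (for $j>u$) are precisely the nonzero values $r_t$ of the composition map obtained on deleting $C_j$ from the truncated diagram $\mathscr D^{[j]}:=\{C_1,\ldots,C_j\}$. By Corollary \ref{4.1}, which rests on Theorem \ref{3.6} together with Lemmas \ref{3.3} and \ref{3.4}, these values are exactly the labels of the boxes that are right extremal in $\mathscr D^{[j-1]}=\{C_1,\ldots,C_{j-1}\}$. Consequently, for $j>u$, the label $m$ occurs in column $C_j$ of $\mathscr T'$ if and only if $b(m)$ is right extremal in $\mathscr D^{[j-1]}$, a condition that only makes sense once $j-1\geq u$, consistent with the remark above that the extra occurrences lie strictly to the right of $C_u$.

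With this dictionary in hand, the corollary reduces to a single monotonicity statement: if $b(m)$ is right extremal in $\mathscr D^{[v-1]}$, then $b(m)$ is right extremal in $\mathscr D^{[s]}$ for every $s$ with $u\leq s\leq v-1$. This is exactly the preservation of extremality of Lemma \ref{4.3}, applied repeatedly: deleting the rightmost column of $\mathscr D^{[v-1]}$ keeps $b(m)$ right extremal in $\mathscr D^{[v-2]}$, and iterating the deletion down to $\mathscr D^{[u]}$ yields right extremality of $b(m)$ in each intermediate truncation. Crucially we invoke Lemma \ref{4.3} only in the direction ``larger diagram implies smaller diagram'', which is the valid one, its converse failing by the Remark after \ref{4.3}; and the assumption $v>u$ guarantees that $b(m)$ genuinely belongs to every truncation $\mathscr D^{[s]}$ through which we pass. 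Translating back, $m$ occurs in column $C_{s+1}$ of $\mathscr T'$ for each $u\leq s\leq v-1$, that is in $C_{u+1},\ldots,C_v$; together with the natural occurrence in $C_u$ this gives $m$ in every $C_i$, $i\in[u,v]$.

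The step I expect to be the main obstacle is the dictionary of the second paragraph, namely the clean identification of the $\mathscr T'$-entries of a given column with the right-extremal labels of the preceding truncation. Corollary \ref{4.1} is stated under the hypothesis $c_j>|\mathscr D^{[j-1]}|$ (the deleted column being strictly tallest), which need not persist at every stage of the column-by-column truncation defining $\mathscr T'$. I would therefore need to verify that the composition-map description of each column of $\mathscr T'$, and hence its equality with the set of right-extremal labels, continues to hold without that height assumption, or else reorganize the induction so that the relevant hypothesis is in force at each deletion. Once this bookkeeping is settled, the monotonicity furnished by Lemma \ref{4.3} is immediate and the interval property follows formally.
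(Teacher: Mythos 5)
Your overall strategy --- reduce the statement to monotonicity of right extremality under removal of columns from the right (Lemma \ref{4.3} iterated), mediated by a dictionary between entries of columns of $\mathscr T'$ and labels of right-extremal boxes of truncated diagrams --- is exactly the argument the paper intends, since Corollary \ref{4.4} is placed as an immediate consequence of Lemma \ref{4.3}. However, your dictionary is off by one column, and the error lands precisely on the one direction of the dictionary your proof needs. The entries of column $C_j$ of $\mathscr T'$ are the values of the composition map attached to $\mathscr D^{[j]}:=\{C_1,\dots,C_j\}$ itself --- computed, per \ref{2.4}, as if a column of height $>|\mathscr D^{[j]}|$ were adjoined on its right --- hence, by Lemma \ref{3.3}, they are the labels of the boxes right extremal \emph{in} $\mathscr D^{[j]}$, not (as you assert) of those right extremal in $\mathscr D^{[j-1]}$. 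Your reading is already incompatible with the sub-tableau property you invoke (the natural entries of $C_j$ are labels of boxes of $C_j$, which do not lie in $\mathscr D^{[j-1]}$), and it is refuted by the paper's own example following \ref{4.9}: for the composition $(2,1,1,2,2)$, the box $b(4)$ is right extremal in $\mathscr D^{[3]}=(2,1,1)$ (it is the last column there), yet $4$ does not occur in column $C_4$ of $\mathscr T'$, whose entries are $5,6,3$; the reason is that on adjoining $C_4$ the box $b(4)$ is captured by it (joined to $b(5)$ by a horizontal line labelled $1$) and so is no longer right extremal in $\mathscr D^{[4]}$. Consequently your ``translating back'' step --- from ``$b(m)$ right extremal in $\mathscr D^{[s]}$'' to ``$m$ occurs in column $C_{s+1}$ of $\mathscr T'$'' --- is false in general, and the proof as written does not go through.

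The repair is short and lands you on the paper's argument: use the unshifted dictionary --- $m$ occurs in column $C_j$ of $\mathscr T'$ if and only if $b(m)$ is right extremal in $\mathscr D^{[j]}$, both directions being Lemma \ref{3.3} applied to $\mathscr D^{[j]}$ with a hypothetical tall column adjoined --- and then iterate Lemma \ref{4.3} exactly as you do: right extremality in $\mathscr D^{[v]}$ propagates down to $\mathscr D^{[i]}$ for all $u\le i\le v$, the case $i=u$ being automatic since $b(m)$ lies in the last column of $\mathscr D^{[u]}$, whence $m$ occurs in every column $C_i$, $i\in[u,v]$. Note also that the obstacle you single out at the end (that Corollary \ref{4.1} assumes the deleted column is strictly tallest) is not the real difficulty: by \ref{2.4}, citing \cite[3.6.4]{FJ3}, the composition map of each truncation is \emph{defined} relative to a hypothetical tall column and is independent of the height of the genuine next column, so that hypothesis is in force at every stage by construction. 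The point that actually required care was which truncation feeds which column of $\mathscr T'$, and that is where the write-up goes wrong.
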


  \subsection {The Profile of a Column Staircase} \label{4.5}

  \

  Recall \ref {3.2}.

  \textbf{Definition}.  The profile of a column staircase $\mathscr S_u^c$ is the set of integers $i'_j:j \in [u-1,c]$.

The profile staircase of $\mathscr S_u^c$ is
$C_{j'}\cap R_{m+1}: j'\in [i'_m,i'_{m+1}-1]:m \in [u-1,c]$.

  \subsection {Recovery of Profiles in $\mathscr T'$} \label{4.6}
  \subsubsection {An Extremal box in an Extremal Column}\label {4.6.1}

  \begin {lemma} let $b$ be a right extremal box which occurs in a right extremal column $C_i$ of $\mathscr D$ in row $R_j$.  Let $r$ be the entry of $b$.  Then in $\mathscr T'$, the entry $r$ exactly occurs in row $R_j$ and all the columns to the right of $C_i$.
  \end {lemma}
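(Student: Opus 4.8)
The plan is to follow the entry $r$ through the column-by-column construction of $\mathscr T'$ in \ref{4.2}, using at each stage the convention that the column being filled is taken ``tall'' (the ``arbitrarily many rows'' of \ref{4.2}), so that the hypothesis $c_k>|\mathscr D^k|$ of \ref{2.4} is in force and Lemma \ref{3.3}, Lemma \ref{3.5.2} and Corollary \ref{4.1} all apply. Under this reading the entries placed in the $m^{th}$ column of $\mathscr T'$ are precisely the labels of the boxes of $C_1,\dots,C_{m-1}$ that are right extremal there, each recorded in the row into which its line labelled by $1$ runs; by \cite[5.4.8(vi)]{FJ2} that line, hence that row, is unique.

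First I would locate $b$. Since $C_i$ is right extremal, its height exceeds that of every column to its right; write $d$ for the largest of those heights, so $c_i>d$. By Theorem \ref{3.6}$(ii)$ a right extremal box of a right extremal column lies in its lower part, whence $b=b_{j,i}$ with $j>d$. Next, because $b$ is right extremal in $\mathscr D$ and $b\in C_i$ survives in the diagram $C_1,\dots,C_m$ for every $m\ge i$, repeated application of Lemma \ref{4.3} shows that $b$ remains right extremal in each of these truncated diagrams, in particular in $C_1,\dots,C_{m-1}$ for every $m\in[i+1,k]$. Consequently, when the $m^{th}$ column of $\mathscr T'$ is computed, Lemma \ref{3.3} produces a line labelled $1$ from $b$ to the (tall) rightmost column, so $r$ does occur in column $C_m$.

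It then remains to fix the row and to exclude all other occurrences. Every column strictly to the right of $C_i$ has height $\le d<j$, so $b$ has no right adjacent box in $C_1,\dots,C_{m-1}$; since $b$ is also right extremal there, the necessary-and-sufficient criterion of Lemma \ref{3.5.2} is met, and its conclusion forces the line from $b$ to be horizontal, landing in $R_j$. Thus $r$ is recorded in row $R_j$ of column $C_m$ for each $m\in[i+1,k]$ and, the line being unique, in no other row. Finally, for $m\le i$ the box $b$ does not lie in $C_1,\dots,C_{m-1}$, so---the labels being pairwise distinct---$r$ cannot be placed in column $C_m$ at all; this yields the word ``exactly''. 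I expect the delicate point to be the row computation of the third paragraph: one must check that as columns are stripped away the moving quantity $c^i$ stays below $j$ and that right extremality of $b$ in each truncation really does match the hypothesis of Lemma \ref{3.5.2}, so that the line remains horizontal and the single row $R_j$ is preserved uniformly across all columns to the right of $C_i$.
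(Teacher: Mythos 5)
Your overall strategy is exactly the paper's: its proof is the one-line appeal to Lemma \ref{4.3} ($b$ stays right extremal as columns are stripped from the right until it is itself removed), with the identification of extremal boxes with composition-map entries (Lemma \ref{3.3}) and the horizontality of the resulting line (Lemma \ref{3.5.2}) left implicit. You have made those implicit ingredients explicit, and for a box genuinely lying in the lower part of $C_i$ your argument is sound and parallels the paper's.

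There is, however, one step that is false as written: ``By Theorem \ref{3.6}$(ii)$ a right extremal box of a right extremal column lies in its lower part.'' Theorem \ref{3.6}$(ii)$ asserts this only for the \emph{remaining} right extremal boxes, i.e.\ those not of the form $b(\mathscr S)$ treated in part $(i)$; it does not exclude that a right extremal box sitting in a right extremal column is some $b(\mathscr S)$ lying in the \emph{upper} part of that column, and the Remark following Theorem \ref{3.6} says precisely that this happens. Concretely, for $\mathscr D=(1,2,1)$ the box $b(2)$ is right extremal (it is $b(\mathscr S)$ for the right extremal staircase $\{C_3\}$) and lies in row $R_1$ of the right extremal column $C_2$, yet its entry reappears in $\mathscr T'$ at $C_3\cap R_2$, not $C_3\cap R_1$ (see the Example in \ref{4.7}); similarly for $(2,1,3,1,2)$ the entry of $b(4)$ descends along a staircase profile. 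So for such boxes the conclusion of the Lemma itself fails, and no argument can close this gap without excluding them: the Lemma must be read, as \ref{4.6.3} confirms, as applying only to extremal boxes that are not some $b(\mathscr S)$ with $\mathscr S$ non-empty, that case being Lemma \ref{4.6.2}. Once that hypothesis is made explicit, your location of $b$ in the lower part is immediate rather than a consequence of Theorem \ref{3.6}$(ii)$, and the rest of your proof goes through. A lesser caveat: your opening description of column $C_m$ of $\mathscr T'$ (each extremal label recorded in the row where its line lands) follows the literal wording of \ref{4.2}, but the tableau the paper actually computes (Figure 2, Proposition \ref{4.9}) keeps the original entries of $C_m$ in rows $R_1,\ldots,R_{c_m}$ and lets the $\ast$-line source occupy row $R_{c_m+1}$; in the delicate case $j=c_m+1$ one must then also check that this box is not already occupied, which does follow from the extremality of $b$ in the truncation containing $C_m$ (via the criterion of Lemma \ref{3.5.2}), but needs saying.
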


  \begin {proof} Indeed by Lemma \ref {4.3}, $b$ remains right extremal as columns are removed from the right until it itself is removed.
  \end {proof}

   \subsubsection {A Column Staircase}\label {4.6.2}

  Following Theorem \ref {3.6}(i), take $b:=b(\mathscr S)$ for some column staircase $\mathscr S = \{C'_{i_j}\}_{j=u}^c: \height C'_{i_j}=j$ with profile $i'_j:j \in [u-1,c]$. Recall the definition of $C'_{i_{u-1}}$ in \ref {3.2}.  It has height $u$ and $b(\mathscr S)$ is the box $R_u\cap C_{i'_{u-1}}$.

   \

   \textbf {Notation.} Let $C'_{i_{c+1}}$ be the last column which needs to be removed from the right of $\mathscr D$, so that $\mathscr S$ becomes right extremal (equivalently that $b=b(\mathscr S)$ becomes extremal) or let $C'_{i_{c+1}-1}$ be the last column of $\mathscr D$.

  \begin {lemma}  Let $r$ be the entry of $b(\mathscr S)$. Then in $\mathscr T'$, the entry $r$ exactly occurs in $C_{j}\cap R_{m+1}: j\in [i'_m,i'_{m+1}-1]:m \in [u-1,c]$, in $\mathscr T'$.

  \end {lemma}

  \begin {proof} This follows from Proposition \ref {3.5} as columns are removed from the right following the removal of $C'_{i_{c+1}-1}$.
  \end {proof}

   \subsubsection {Combining the two cases}\label {4.6.3}


    We may remove columns from the right until a given box becomes right extremal.  Then by Theorem \ref {3.6}, it is either some $b(\mathscr S)$ with $\mathscr S$ non-empty or an extremal box in an extremal column.  In the latter case we may view it as $b(\mathscr S)$ with $\mathscr S$ empty.  Then writing $b$ as $C'_{i_0}\cap R_{m+1}$ and adapting the notation of \ref {4.6.2}, the entry  $r$ of $b$ occurs in $C_{j}\cap R_{m+1}: j\in [i'_0,i'_1-1]$ in $\mathscr T'$.  Here the profile staircase is flat.

    \

  \textbf{Remarks}.
   Thus the repeated entries in $\mathscr T'$ follow the profile staircase of $\mathscr S^c_u$.

  Notice that by definition $C'_{i_m}:m \in [u,c]$ has height $m$ in $\mathscr D$.  Thus in $\mathscr T$ the box in $R_m\cap C'_{i_m}$ already has an entry whilst the box just below in $\mathscr D'$, that is in $R_{m+1}\cap C'_{i_m}$ does not.

\subsection {A Second rather Simple Algorithm for the Composition Tableau $\mathscr T'$} \label{4.7}

\

We now describe a different and very simple algorithm to describe $\mathscr T'$ which will be used exclusively in Section \ref{5}.  It will give the reader much needed relief! It makes no use of column staircases, though rather naturally the notion of a pair of neighbouring columns is still needed.

Recall the natural order $i\leq j$ on the set $[1,n]$ of entries of $\mathscr T$.

 Define a new total order $i'\preceq j'$ on these entries by \textit{defining} them to increase strictly down the columns and then from \textit{right to left}. Let $\{1',2',\ldots,n',\}$ be the set $[1,n]$ written in increasing order for $\preceq $.   This is not quite $\{1,2,\ldots,n,\}$ in reverse order.   In particular $1'$ is the first entry in the last column, generally different from $n$.

Let $\mathscr T(s)$ be the tableau starting from $\mathscr T$ and obtained after $s$ steps by placing all the entries $\{1',2',(l-1)'\}$ and some of the $l'$ in $\mathscr D'$ sequentially by the inductive procedure given below.

 A column $C_j$ is regarded as column of $\mathscr T$ and of $\mathscr T(s)$.  However $\height C_j$ will  \textit{always} refer to its height in $\mathscr T$, equivalently in $\mathscr D$.  Thus if $\height C_{j+1} < t$ then the box $b:=R_{t} \cap C_{j+1}$ is empty in $\mathscr T$, may yet have an entry in $\mathscr T(s)$.  Also, columns are regarded as neighbours only if they are neighbours in $\mathscr D$  (see example below).

  Let $R_t\cap C_j$ be the rightmost box of $\mathscr T(s) $ with entry $l'$.

  Suppose $\height C_{j+1} < t$.

  \

$(i)$. If $b$ is empty in $\mathscr T(s)$, put $l'$ in the box  $R_{t}\cap C_{j+1}$ of $\mathscr T(s)$ to obtain $\mathscr T(s+1)$.

\

$(ii)$. If not add no new entry $l'$ to  $\mathscr T(s)$.

\

  Suppose that $\height C_{j+1}=t$.

  \

  $(iii)$. If $C_{j+1}$  has a left neighbour in $\mathscr D$ and has no entry in  $R_{t+1}$ with respect to $\mathscr T(s)$, put $l'$ in $R_{t+1}\cap C_{j+1}$ to obtain $\mathscr T(s+1)$.

  \

  $(iv)$. Otherwise  add no new entry $l'$ to $\mathscr T(s)$.

  \

  \textbf{Definition}. If (ii) or (iv) apply, we say that $l'$ is stopped just before it reaches $C_{j+1}$.  Notice that $l'$ goes right and downward (though not necessarily strictly).  If at some point $l'$ enters $R_t$, we say it reaches $R_t$.

  \

  Let $\mathscr T(\infty)$ be the tableau obtained starting from $\mathscr T$ when the labels $(1',2',\ldots,n')$ have been inserted, possibly many times, into $\mathscr D'$ by the above rules.

\

\textbf{Examples.}  Consider the composition $(1,2,1)$. One has $4\preceq 2 \preceq 3\preceq 1$.  By (iii) with $j=2,t=1$, a second $2$ is put in $b:=C_3\cap R_2$.  Then by (ii), the entry $3$ is not placed in $b$. This exactly corresponds to $b(3)$ not being extremal and indeed it is linked to $b(4)$ by line with label $1$, whilst $b(2)$ is extremal.  Again although $C_2$ and $C_3$ both have height $2$ in $\mathscr T(s+1)$, they are \textit{not} considered as neighbours and so $(iii)$ is not applied to put a $3$ in $C_3\cap R_3$.  This would have made $b(3)$ extremal which is false.

 On the other hand if we omit the first column and keep the labelling of the remaining boxes then by $(i)$, $3$ not $2$ is placed in $b$ and $b(3)$, but not $b(2)$ is extremal.

   These observations lie behind the proof of the proposition below.
 \subsection {Semistandard Tableaux}\label{4.8}

 We call a tableau (with entries in $[1,n]$) semi-standard if entries strictly increase down the columns and decrease along the rows on going from left to right, with no gaps down columns.  (This might not be an accepted definition.)

\begin {lemma}  The tableau $\mathscr T(\infty)$ is semi-standard for the order relation $\preceq$.
\end {lemma}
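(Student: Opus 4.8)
The plan is to induct on the step counter $s$ of the algorithm of \ref{4.7}, carrying the invariant that $\mathscr{T}(s)$ is semi-standard for $\preceq$; since $\mathscr{T}(0)=\mathscr{T}$ is semi-standard (indeed strictly, straight from the definition of $\preceq$) and $\mathscr{T}(\infty)$ is a limit of the $\mathscr{T}(s)$, the Lemma follows. Before the induction I would record two preparatory facts. First, a \emph{positional reading} of $\preceq$: because $1',2',\ldots,n'$ are the entries of $\mathscr{T}$ listed column by column from the right, top to bottom within each column, the original entries satisfy $v_{\mathscr T}(t,j)\prec v_{\mathscr T}(t',j')$ exactly when $j>j'$, or $j=j'$ and $t<t'$. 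Second, by inspection of rules $(i)$ and $(iii)$ each freshly inserted copy of $l'$ moves weakly right and weakly down from the previous copy, so the occurrences of a fixed value trace a monotone right-and-down path issuing from its box $(t_0,j_0)$ in $\mathscr{T}$; in particular every current rightmost copy of $l'$ sits at some $(t,j)$ with $j_0\le j$ and $t_0\le t$.

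With these in hand the inductive step is a local check at the single box receiving the new copy of $l'$. Comparisons against an \emph{original} entry are immediate from the positional reading: in $(iii)$ the box above the insertion $(t{+}1,j{+}1)$ is the original bottom box $(t,j{+}1)$ of $C_{j+1}$, whose value lies at column $j+1>j\ge j_0$ and is therefore $\prec l'$; the identical computation settles the right neighbour in $(i)$ and the (implicit) case $\height C_{j+1}>t$, where the occupied right neighbour is an original entry at column $j+1>j_0$, hence $\prec l'$, and $l'$ is simply stopped. Comparisons of a copy against another \emph{copy} — the genuinely new entries lying below the natural bottoms of columns — I would bootstrap from the invariant already secured one column to the left: if $u=v(t,J)$ and $w=v(t{+}1,J)$ are copies with $t{+}1>\height C_J$, then $w$ cannot have come by $(iii)$ (which needs $\height C_J=t$), so it arrived by $(i)$ from $(t{+}1,J{-}1)$; chaining the row inequality $u\preceq v(t,J{-}1)$ at $R_t$ with the column inequality $v(t,J{-}1)\prec v(t{+}1,J{-}1)=w$ in $C_{J-1}$ gives $u\prec w$. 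The same chain, applied with the roles of source and target of an $(i)$-step, also handles strict increase above a copy inserted by $(i)$.

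The step I expect to be the main obstacle is the \emph{no-gaps} condition in columns, since rule $(i)$ inserts $l'$ into a box $(t,j{+}1)$ with $t>\height C_{j+1}$ without checking that the box immediately above is filled. I would close this by showing that when $(i)$ fires, $(t{-}1,j{+}1)$ is already occupied. By the column invariant the box $(t{-}1,j)$ above the source of $l'$ carries a value $w\prec l'$, hence fully placed earlier; tracing $w$ one step further and splitting on $\height C_{j+1}$, which rule $(i)$ forces to be $<t$, gives two exhaustive cases: if $\height C_{j+1}<t-1$ then $w$ (or an even earlier value) fills $(t{-}1,j{+}1)$ by rule $(i)$, while if $\height C_{j+1}=t-1$ then $(t{-}1,j{+}1)$ is the original bottom box of $C_{j+1}$ and is occupied from the outset. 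Together with the observation that rule $(iii)$ only ever appends at row $\height C_{j+1}+1$, directly beneath an occupied box, this establishes contiguity down every column and completes the induction.
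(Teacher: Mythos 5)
Your proof is correct, and at top level it runs the same induction as the paper's own proof: insert one copy at a time and verify semi-standardness locally at the newly filled box. But your version is genuinely more complete in two places where the paper is thin. First, the paper dismisses the no-gaps condition with the bare remark that ``by \ref{4.7}(i) at no step in the construction is there a column with a gap''; you correctly single this out as the real issue --- rule $(i)$ never checks that the box above its target is occupied --- and you close it with the right argument: the entry $w$ in the box $(t-1,j)$ above the source is $\prec l'$ by the column invariant, hence was completely processed earlier, and following $w$ one step past column $C_j$ shows $(t-1,j+1)$ is occupied, the split $\height C_{j+1}<t-1$ versus $\height C_{j+1}=t-1$ being exhaustive precisely because rule $(i)$ forces $\height C_{j+1}<t$. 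Second, your inequalities point the right way: the paper's final paragraph asserts that the box above $b''$ and the box to its right carry entries $\succeq l'$, which is backwards (column strictness requires the entry above to be $\prec l'$; both occurrences should read $\preceq$, indeed $\prec$), whereas your positional reading of $\preceq$ together with the chain through column $C_{J-1}$ gives the correct bounds. The one gloss on your side: in rule $(i)$ the right neighbour of the target box need not hold an original entry --- it can hold a copy, placed by a rule-$(iii)$ move from the box diagonally above the target --- so ``the identical computation'' does not literally apply there; but in that event the same value also sits directly above the target, so your copy-versus-copy chain (or the ``fully placed earlier'' processing-order observation you already invoke for the no-gaps step) disposes of it. That is a presentational gap, not a mathematical one.
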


\begin {proof} Obviously $\mathscr T$ is semi-standard by the definition of $\preceq$.  We prove the assertion for $\mathscr T(\infty)$ by induction adding one new integer $l'$ at a time.  Observe that by \ref {4.7}(i) at no step in the construction of $\mathscr T(\infty)$ is there a column with a gap, that is an empty box with entries above and below. Consider a box $b$ with entry $l'$ in $\mathscr T(s)$.

If there is no entry in a box $b'$ just to the right of $b$, we put $l'$ in $b'$, by \ref {4.7}(i).
By the induction hypothesis, the box above $b'$ and to the right of $b'$ has only entries than $\preceq l'$ , whilst the box below $b'$ is empty, since there are no gaps.

If $b'$ is already full but the box $b''$ just below $b'$ is empty, then we \textit{may} (resp.  \textit{may not}) be putting
$l'$ in $b''$, by \ref {4.7}(iii)(resp. \ref {4.7}(iv)).

As before the box to the right of $b''$ has an entry $\succeq l'$, whilst the box $b'$ above $b''$ also has an entry $\succeq l'$ because it is to the right of $b$.  Hence the induction carries forward.
\end {proof}

\textbf{Remark 1}.  In particular the entries in $\mathscr T(\infty)$ are strictly increasing down the rows for $\preceq$ whilst this is generally false for $\leq$.

\

\textbf{Remark 2}.  We may obtain a ``genuine'' semi-standard tableau from $\mathscr T(\infty)$ with entries in [1,n] also increasing for $\leq$ from left to right and strictly down columns as well as having no gaps in rows.  This is by taking the left-right mirror image of $\mathscr T(\infty)$ and replacing $i'$ by $i$.  It has as a sub-tableau, the tableau obtained by performing the same operations on $\mathscr T$.

\subsection {Equality of completed Tableaux.} \label{4.9}

\begin {prop}  $\mathscr T(\infty)=\mathscr T'$.
\end {prop}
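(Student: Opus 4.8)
The plan is to compare the two tableaux entry by entry, using the fact that Section~\ref{4.6} already describes $\mathscr T'$ completely. Indeed, by the Lemmas of \ref{4.6.1} and \ref{4.6.2} (combined in \ref{4.6.3}), every entry $r$ of $\mathscr T$, regarded as $b(\mathscr S)$ for the (possibly empty) right extremal column staircase $\mathscr S^c_u$ furnished by Theorem~\ref{3.6}, occurs in $\mathscr T'$ precisely along the profile staircase $C_j\cap R_{m+1}:j\in[i'_m,i'_{m+1}-1], m\in[u-1,c]$ of $\mathscr S$, and every repeated entry of $\mathscr T'$ arises in this way. Since $\mathscr T$ is a common sub-tableau of $\mathscr T'$ and $\mathscr T(\infty)$ with the same column labels, and an original entry occupies the same box in each, it suffices to prove that the algorithm of \ref{4.7} places each $r$ in exactly the boxes of this profile staircase.

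First I would record that both constructions are compatible with deleting the rightmost column $C_k$. For $\mathscr T'$ this is immediate from the inductive definition in \ref{4.2}. For $\mathscr T(\infty)$ it holds because the moves (i)--(iv) are strictly rightward and advance one column at a time, so an entry originating in $C_k$ never alters a column to its left, while the heights and neighbour relations governing propagation inside $C_1,\dots,C_{k-1}$ are unaffected by the removal of $C_k$; hence the restriction of $\mathscr T(\infty)(\mathscr D)$ to its first $k-1$ columns equals $\mathscr T(\infty)(\mathscr D^k)$. Arguing by induction on $k$, the first $k-1$ columns of the two tableaux already agree, so that only the last column $C_k$ remains. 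Thus I am reduced to showing that, for each $t$, the entry that \ref{4.7} propagates into $R_t\cap C_k$ is exactly the repeated entry that \ref{4.6} assigns to that box (which, in the simple regime $c_k>|\mathscr D^k|$, is the composition map value $r_t$ of the Corollary of \ref{4.1}).

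The core of the proof is to match the three move-types of \ref{4.7} with the three features of a profile staircase. Rule (i) should reproduce the horizontal run of $r$ along a fixed row over the strictly shorter columns --- that is, over the $c$-lower part of a right extremal column, where $\height C_{j+1}<t$. Rule (iii) should reproduce the down-step from $R_{m+1}$ to $R_{m+2}$; this can only fire at a column $C_{j+1}$ of height $t=m+1$ that has a left neighbour, which is exactly the defining requirement (Definition~\ref{3.2}) that each column $C'_{i_{m+1}}$ of a column staircase possess a left neighbour. Finally the stopping rules (ii) and (iv) should occur precisely when $b(\mathscr S)$ has become right extremal, i.e.\ when the replacement chain $b_0\mapsto b_1\mapsto\cdots\mapsto b_m$ of Proposition~\ref{3.5} terminates. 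The main obstacle is to make this last equivalence precise: one must show, step by step, that the blocking of a move by an already-filled box (rule (ii)) or by a missing left neighbour (rule (iv)) coincides with the termination of that replacement, and in particular that the ``rightmost labelling'' subtlety illustrated in the Example of \ref{4.7} --- two adjacent columns of equal height in $\mathscr T(s)$ need not be neighbours in $\mathscr D$ --- is exactly what forbids $r$ from being carried past a box that is not right extremal.

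It then remains to control the interaction of different entries and the degenerate cases. The semi-standardness of $\mathscr T(\infty)$ established in \ref{4.8} guarantees that distinct propagated entries never overwrite one another, which matches the pairwise distinctness of the nonzero $r_t$ noted in \ref{2.4} and explains why rule (ii) may legitimately stop a propagation against an entry already present. The boundary value $t=|\mathscr D^k|+1$ and the flat case $c=|\mathscr D^k|$ of empty $c$-lower parts (the Remarks after Theorem~\ref{3.6}) are covered by the same correspondence with $\mathscr S$ allowed to be empty, as in \ref{4.6.3}. Assembling these identifications over every entry and every deleted column yields $\mathscr T(\infty)=\mathscr T'$.
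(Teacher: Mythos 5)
Your proposal follows the same skeleton as the paper's proof: reduce by induction to the case where the column in question is the rightmost one (both constructions being compatible with deletion of the last column), and then match the moves (i)--(iv) of \ref{4.7} against the extremality/staircase description of the composition map furnished by Theorem \ref{3.6} and Proposition \ref{3.5.3}. Your reduction step is correct, and you argue it for both tableaux, where the paper simply cites \cite[Lemma 5.4.9]{FJ2}.

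The gap is that the core equivalence is announced rather than proved. You yourself flag as ``the main obstacle'' the claim that the stopping rules (ii), (iv) fire exactly when right extremality is lost, but you never establish it. The paper proves precisely this point by a four-way case analysis comparing the row $t'$ of an entry $m'$ of column $C_j$ in $\mathscr T(\infty)$ with the height $t$ of the new last column $C_{j+1}$, and the decisive mechanism, absent from your plan, is an ordering argument: in the critical case $t'=t+1$, the entry $l'$ of $C_j\cap R_t$ satisfies $l'\preceq m'$, hence is processed first, and exactly when $C_{j+1}$ has a left neighbour it is $l'$ that enters $R_{t+1}\cap C_{j+1}$ by (iii), so that $m'$ is blocked by (ii); this mirrors Proposition \ref{3.5.3}, by which $b(m')$ is then joined by a line with label $1$ to $C_{j+1}\cap R_t$, hence is not extremal, hence absent from column $C_{j+1}$ of $\mathscr T'$. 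Your appeal to semi-standardness (\ref{4.8}) cannot substitute for this: it guarantees that entries are never overwritten, but says nothing about which of two competing entries arrives first, which is the whole point. Finally, your one precise formulation of the correspondence is inverted: an entry $r$ is stopped exactly when its box ceases to be right extremal in the enlarged truncation, that is, when $b(r)$ is one of the replaced boxes $b_0,\ldots,b_{m-1}$ of Proposition \ref{3.5.3}; the terminal box $b_m$ of the replacement chain is the one whose entry continues to propagate, not the one that is stopped. As stated, your criterion would attach the stopped entries to the extremal boxes, which is the opposite of what $\mathscr T'$ records.
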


\begin {proof}  Briefly the repeated entries in $\mathscr T(\infty)$ also follow the profile staircase of the staircases $\mathscr S^c_u$ - see the remarks in \ref {4.6.3}.

In mored detail we prove the assertion of the Proposition inductively using $(i)-(iv)$ of \ref {4.7} going through the columns from left to right.  It is obvious for the first column, so assume it holds up to $C_j$.  By \cite [Lemma 5.4.9]{FJ2} we may by successively removing columns from the right, assume that $C_{j+1}$ is the last column of $\mathscr D$. The assertion is then obvious for all the entries of $C_{j+1}$ which already appear in the common sub-tableau $\mathscr T$.

Take  $\height C_{j+1}=t$ (in $\mathscr D$).  Let $m'$ be the entry of $C_j\cap R_{t'}$ in $\mathscr T(\infty)$ and let $b$ be the unique box in $\mathscr T$ with label $m'$.

Take $t'>t+1$. Then by (i) above, $m'$ is the entry of $C_{j+1}\cap R_{t'}$ in $\mathscr T(\infty)$. By Theorem \ref {3.6}(ii) and Lemma \ref {3.5.2}, this also holds for $\mathscr T'$.


Take $t'=t+1$.  Suppose $C_{j+1}$ is part of a column staircase with the box $R_{t}\cap C_j$ having entry $l'$ in $\mathscr T(\infty)$. Then (Proposition \ref {3.5.3}) $b$ is linked by a line with label $1$ to $C_{j+1}\cap R_t$ and so is not extremal, so $m'$ does not appear in $C_{j+1}$ in $\mathscr T'$. Yet $l'\preceq m'$, so $l'$ enters $R_{t+1}\cap C_{j+1}$ by (iii) and so $m'$ is stopped from entering $R_{t+1}\cap C_{j+1}$ by (ii) and hence  $m'$ also does not appear in $C_{j+1}$ in $\mathscr T(\infty)$.  Conversely if $C_{j+1}$ is not part of a column staircase, $b$ is extremal in $\mathscr T'$ by Theorem \ref {3.6}(ii), so $m'$ does appear in $C_{j+1}$ in $\mathscr T'$.  Yet $l'$ does not enters $R_{t+1}\cap C_{j+1}$ by $(iv)$, leaving this place empty for $m'$ to enter by $(i)$. Thus $m'$ also  appears in $R_{t+1}\cap C_{j+1}$ in $\mathscr T(\infty)$.

Take $t'=t$. Suppose $C_{j+1}$ is part of a column staircase.  Then $m'$ enters $R_{t+1}\cap C_{j+1}$ in $\mathscr T(\infty)$ by (iii), whilst by Proposition \ref {3.6}, $m'$ also appears in $R_{t+1}\cap C_{j+1}$ in $\mathscr T'$.  Conversely if  $C_{j+1}$ is not part of a column staircase, $b$ is not extremal in $\mathscr T'$, so $m'$ does not appear in $C_{j+1}$ in $\mathscr T'$.  Yet by $(iv)$, $m'$ does not enter $R_{t+1}\cap C_{j+1}$, so $m'$ does not   appear in $C_{j+1}$ in $\mathscr T(\infty)$.

Take $t'<t$.  Then by (ii) and (iv), $m'$ is stopped from entering $C_{j+1}$ in $\mathscr T(\infty)$, whilst $b$ is not extremal, so does not appear in $C_{j+1}$ in $\mathscr T'$.

\end{proof}

\textbf{Examples}.  Consider the composition $(2,1,1,2,2)$.  The column staircases are $C_3,C_4$ and $C_4,C_5$.  Only the latter is extremal.  Indeed $b(3),b(8)$ are joined by a line with label $1$ by the gathering of loose ends \cite [5.4.7]{FJ2}.

Thus $\mathscr T'$ has $\mathscr T$ as a sub-tableau with $2$ also occurring in $C_2 \cap R_2$, $3$ also occurring in $C_3\cap R_2, C_4\cap R_3$ and $6$ also occurring in $C_5\cap R_3$.  One checks it coincides with $\mathscr T(\infty)$, noting that $6 \preceq 3$.

On the other hand for the composition $(2,1,1,2,1)$, both column staircases $C_2,C_4$ and $C_5$ are extremal.  Then $\mathscr T'$ differs from the previous case by having its last column with entries $\{7,5,3\}$ going downwards.  One checks it coincides with $\mathscr T(\infty)$.

%
%
%
%
%

\section {Reconstructing the Weierstrass Section} \label {5}

\subsection {Goal}\label {5.1}

Our aim is to use the composition tableau as presented by $\mathscr T(\infty)$ to show that the lines in $\mathscr D$ it defines, recover $e+V$ as a \textit {Weierstrass section without using the very difficult combinatorics of \cite [5.4] {FJ2}}.  This will be a great simplification. Indeed for this and apart from notation and definitions the reader will be happy to forget the previous sections and most the contents of our previous papers.  Indeed the reader is \textit{advised} to forget the construction of \cite [5.4]{FJ2} to avoid getting into a fearful muddle.

\subsection {The Strategy}\label {5.2}

Recall (\ref {2.4}) the composition map $t\mapsto r_t$.   One has $r_t \neq 0$, if $t \leq \height \mathscr D^k$, whilst $r_t = 0$, if $t > \height \mathscr D^k +1$.

 When $|\mathscr D^k| <  c_k$, the box in $\mathscr D^k$ with entry $r_t$ is joined to the box $C_k\cap R_t$ with a line labelled by a $1$.

In addition  \cite [Lemmas 3.4.2(i), 3.6.8]{FJ3} show how this description is modified when the above inequality fails.

Using this modification we may consider that the last column of $\mathscr T'$ determines the set $\ell(\mathscr D^k,C_k)$.

Then we repeat this procedure removing successively columns from the right of $\mathscr D$.

Then (compare to \cite [Lemma 5.4.9]{FJ2}) we have the

\begin {lemma}  $\ell(\mathscr D)$ is the disjoint union of $\ell(\mathscr D^k)$ and $\ell(\mathscr D^k,C_k)$.
\end {lemma}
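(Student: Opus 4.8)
The plan is to exploit the inductive, column-by-column manner in which $\ell(\mathscr D)$ is read off from the composition tableau, rather than to re-run the construction of \cite[Sect. 5]{FJ2}. Recall from \ref{2.2} that every element of $\ell(\mathscr D)$ is a line joining two boxes in \emph{distinct} columns, and from \ref{4.2} and \ref{5.2} that these lines are recovered from $\mathscr T'$ by processing its columns from right to left: the repeated entries appearing in column $C_j$ of $\mathscr T'$ (those not already present in the sub-tableau $\mathscr T$) record, via \ref{2.4} and \cite[Lemmas 3.4.2(i), 3.6.8]{FJ3}, precisely the lines whose right-hand endpoint lies in $C_j$. I would therefore partition $\ell(\mathscr D)$ according to the column containing the right-hand endpoint of each line.

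First I would show that the sub-tableau of $\mathscr T'$ (for $\mathscr D$) formed by the columns $C_1,\ldots,C_{k-1}$ coincides with the composition tableau of $\mathscr D^k$. In the formulation of \ref{4.2} this is essentially the definition, since $\mathscr T'$ is built by inserting the last column and then ``suppressing the rightmost column and repeating''. In the self-contained formulation of \ref{4.7}, using $\mathscr T(\infty)=\mathscr T'$ (Proposition \ref{4.9}), it follows from the locality of the insertion rules: the $\preceq$-order and the labels of the boxes lying in $C_1,\ldots,C_{k-1}$ are unchanged when $C_k$ (whose boxes carry the largest labels of $\mathscr T$ and the smallest entries for $\preceq$) is deleted, and the stopping conditions $(i)$--$(iv)$ governing the passage from $C_j$ to $C_{j+1}$ for $j\leq k-2$ involve only columns in $\mathscr D^k$. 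Hence the lines among $C_1,\ldots,C_{k-1}$ are read off identically whether one works in $\mathscr D$ or in $\mathscr D^k$, so the lines of $\ell(\mathscr D)$ whose right-hand endpoint avoids $C_k$ are exactly $\ell(\mathscr D^k)$, whilst those whose right-hand endpoint lies in $C_k$ are by definition $\ell(\mathscr D^k,C_k)$.

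It remains to see that these two families exhaust $\ell(\mathscr D)$ and are disjoint. Exhaustion is immediate: a line joins two distinct columns, so its right-hand endpoint lies either in $C_k$ or in some $C_j$ with $j\leq k-1$, and in the latter case both endpoints lie in $\mathscr D^k$. Disjointness is equally immediate, since a line of $\ell(\mathscr D^k,C_k)$ has its right-hand endpoint in $C_k$ while a line of $\ell(\mathscr D^k)$ has both endpoints in $\mathscr D^k$, and no line can meet $C_k$ in two boxes.

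I expect the only genuine content to be the locality claim of the second paragraph, namely that deleting $C_k$ leaves the first $k-1$ columns of the composition tableau, and the lines read off from them, untouched. This is the point at which one must check that the right-and-downward flow of the algorithm of \ref{4.7} never propagates information from $C_k$ leftwards; everything else is bookkeeping. One could alternatively deduce the statement directly from \cite[Lemma 5.4.9]{FJ2}, but the whole purpose of Section \ref{5} is precisely to avoid that route.
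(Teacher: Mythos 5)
Your proof is correct and is essentially the paper's own argument: the paper disposes of this lemma with the single phrase ``By construction,'' because in Section \ref{5} the set $\ell(\mathscr D)$ is \emph{defined} inductively, column by column from the right, with the last column of the tableau prescribing $\ell(\mathscr D^k,C_k)$ and the recursion prescribing $\ell(\mathscr D^k)$. Your elaboration --- partitioning lines by the column of their right-hand endpoint and checking that the right-and-downward flow of the algorithm of \ref{4.7} leaves the first $k-1$ columns of the composition tableau untouched when $C_k$ is deleted --- is exactly the content that the paper leaves implicit in that phrase.
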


\begin {proof}  By construction.
\end {proof}


\subsection {Stabilisation}\label {5.3}

\subsubsection {Above the Lowest Row of $C_k(s)$}\label {5.3.1}

Suppose  $t \in [1,s-1]$.  By \cite [Lemma 3.4.2 (i)]{FJ3} the entry of $\textbf{C}_k(s+1)\cap R_t$ is that of $\textbf{C}_k(s)\cap R_t$.  This exactly means that a line joins  the box $R_t \cap C_k(s)$ to the  box $b$ in $\mathscr D^k$ with entry $r_t$, and also joins the box $R_t \cap C_k(s+1)$ to this same box  $b$ and with the same labels.

All these left going lines are up-going.

\

\textbf{Conclusion.}  Thus from the composition map and backward induction we can recover the lines from $\mathscr D^k$ to all but the lowest row of $C_k(s)$ as $s$ decreases.

\

\subsubsection {The Lowest Row of $C_k$}\label {5.3.2}

For the computation of the lowest row of $C_k$, we recall \cite [3.6.8]{FJ3}, which we state in a slightly more precise form recalling our change of convention with respect to up and down-going compared to \cite {FJ3}.

\begin {lemma} Set $\height C_k=t$. Let $r_t$ be the image of $t$ under the composition map and $b(r_t)$ the  box in $\mathscr D^k$ with entry $r_t$.

\

$(i)$.  Suppose $t \in [\mathscr D^k]$.  Then the left going line joining  $C_k\cap R_t$ to $b(r_t)$ has label $\ast$.
It is up-going.

Suppose $r_{t+1}\neq 0$.  Then there is a left going line from  $C_k\cap R_t$ with label $1$ to $b(r_{t+1})$.

It may be down-going, but by at most one row.

\

$(ii)$. Suppose $t \notin [\mathscr D^k]$.  Then the left going line joining  $C_k\cap R_t$ to $b(r_t)$ has label $1$.

It is up-going.

\

$(iii)$.  There are no other left going lines from $C_k\cap R_t$ to $\mathscr D^k$.
\end {lemma}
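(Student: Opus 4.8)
The final statement to prove is the Lemma in Section 5.3.2, which describes precisely the left-going lines from the lowest row $C_k \cap R_t$ (where $t = \height C_k$) back to $\mathscr D^k$, together with their labels ($1$ or $\ast$) and their up/down-going character. Let me sketch a proof.

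The plan is to reduce the statement to the ``tall'' regime $c_k>|\mathscr D^k|$ treated in Section \ref{3}, where the composition map and the right-extremal boxes are completely understood, and then to track what happens to the lowest row as the height of $C_k$ is brought down to its true value $t$. Since the purely combinatorial content (the labels and the existence of the lines) is exactly \cite[3.6.8]{FJ3}, the substantive task here is to verify the up/down-going assertions under the convention reversal of \ref{2.1}; I would therefore organise the proof around the convention rather than re-establishing the labels from scratch.

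First I would augment $C_k$ to a column $\hat C_k$ of height $>|\mathscr D^k|$, obtaining a diagram in which Lemma \ref{3.3}, Theorem \ref{3.6} and Proposition \ref{3.5.3} apply verbatim: every right-extremal box of $\mathscr D^k$ is joined to $\hat C_k$ by a unique $1$-labelled line, and the box joined to $\hat C_k\cap R_\tau$ has entry $r_\tau$. By Section \ref{3} the resulting lines to $\hat C_k$ are never up-going, so each $b(r_\tau)$ lies in a row $R_{\tau'}$ with $\tau'\le\tau$ (either horizontally, or as the strictly down-going replacement of Proposition \ref{3.5.3}). By \ref{5.3.1} (that is, \cite[3.4.2(i)]{FJ3}) the lines landing in rows $R_1,\dots,R_{t-1}$ are unaffected as $\hat C_k$ is shortened back to height $t$, and all of these are up-going; so it suffices to analyse the lines incident to the bottom box $C_k\cap R_t$.

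For part (ii), when $t\notin[\mathscr D^k]$ there is no column of $\mathscr D^k$ of height exactly $t$, hence no equal-height partner for $C_k$; since every $\ast$-label originates from a Benlolo--Sanderson pairing of two neighbouring columns of equal height (\ref{3.1}), no $\ast$ can be created and the line to $b(r_t)$ retains its label $1$ from the tall diagram. As $b(r_t)$ lies in a row $R_{t'}$ with $t'\le t$, this line, read as a left-going line from $C_k\cap R_t$, is up-going, as required. For part (i), when $t\in[\mathscr D^k]$ the column $C_k$ now has an equal-height neighbour, and the critical-height transition \cite[3.4.2(ii)]{FJ3} (the last paragraph of Proposition \ref{3.5.3}) converts the bottom-row $1$-line into a $\ast$-line to $b(r_t)$, again up-going for the same reason. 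The freeing of the bottom box then lets the loose end that in the tall diagram joined $b(r_{t+1})$ to $\hat C_k\cap R_{t+1}$ reattach to $C_k\cap R_t$ once row $R_{t+1}$ is deleted; as $b(r_{t+1})$ sits in $R_{t+1}$ or higher, this reattached $1$-line is down-going by at most one row. Part (iii) follows from the uniqueness of the right-going $1$-line at each box \cite[5.4.8(vi)]{FJ2} together with the bounds of \cite[5.4.8]{FJ2} on $\ast$-lines, which in case (ii) also prevent any loose end from reattaching.

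I expect the delicate point to be precisely this reattachment: one must show that exactly the loose end $b(r_{t+1})$, and no other, descends to $C_k\cap R_t$ in case (i) while none does so in case (ii), and that the descent is by at most one row, which is where the minimal-distance and up-going-linkage bounds of \cite[5.4.8]{FJ2} do the real work. The convention bookkeeping in the degenerate horizontal case $b(r_t)\in R_t$, where a line is vacuously both up- and down-going, also needs a careful statement so that ``up-going'' is the correct reading.
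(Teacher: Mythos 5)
Your proposal is correct and takes essentially the same approach as the paper: the paper's entire proof reads ``Apply \cite[Lemma 3.6.8]{FJ3} and \cite[5.4.8(ix)]{FJ2} for the last part of (ii)'', i.e.\ it too regards the statement as a restatement of \cite[3.6.8]{FJ3} with only the up/down-going assertions needing attention under the reversed convention. Your expanded bookkeeping for those directional claims (the tall-column regime of Section \ref{3}, shortening via \cite[3.4.2]{FJ3}, and the bounds of \cite[5.4.8]{FJ2}) is precisely the machinery the paper itself uses in \ref{3.5.3}, \ref{3.5.4} and \ref{5.3.1}, merely written out instead of cited.
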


\begin {proof}  Apply \cite [Lemma 3.6.8]{FJ3} and \cite [5.4.8(ix)] {FJ2} for the last part of (ii).

\end {proof}

\textbf{Remark.}  As promised in \ref {3.6} we have shown that the down-going property only slightly fails, namely in $(i)$.

\

\textbf{Examples}.  Suppose $\mathscr D$ is defined by the composition $(2,1)$.  In this $b(2),b(3)$ are extremal.  Thus $r_1=3,r_2=2,r_3=0$.  Suppose a column $C_3$ of height $3$ is adjoined to the right of $\mathscr D$.  When $b(6)$ is removed so that $\height C_3$ becomes $2$, the label $1$ on the line $\ell_{2,5}$ is replaced by a $\ast$, but no other line joins $b(5)$.   When in addition $b(5)$ is removed so that $\height C_3$ becomes $1$, the label $1$ on the line $\ell_{3,4}$ is replaced by a $\ast$ and $\ell_{2,4}$ is adjoined.  The latter is right and up-going by one line and has label $1$.

Now suppose $\mathscr D$ is defined by the composition $(m,m-1,\ldots,1,1,2,\ldots,m)$. The box $b:=b(\frac{m(m+1)}{2})$ is extremal.  Moreover in this case $m = \height \mathscr D$.  Suppose a column $C$ of height $m+1$ is adjoined to the right of $\mathscr D$.  Then there would be a line labelled by a $1$ joining $b$ and the box $C\cap R_{m+1}$ which is right and down-going by $m$ lines.  This translates to $r_{m+1}=1$.  One may note that $\mathscr T'$ has $\frac{m(m+1)}{2}$ in the $(m+1)^{th}$ in $R_{m+1}$ as well as in all entries of the diagonal joining the co-ordinates $((1,m), (m+1,2m))$, as expected from Proposition \ref {4.9}.

\subsection {Resulting Description of $\ell(\mathscr D^k,C_k)$ }\label {5.4}

We shall now view $\ell(\mathscr D)$ as being given by \ref {5.2}, the definition of $\mathscr T'$ and \textit{imposing} \ref {5.3} to describe $\ell(\mathscr D^k,C_k)$.  This gives in particular.

Set $s=|\mathscr D^k|$ and $t =\height C_k$.

\

$(i)$.  Suppose $s < t$.

  Then for all $i \in [1,s+1]$, the box $C_k\cap R_i$ has a unique left going line, except when $i=s+1$ and $r_{s+1}=0$. It has label $1$ and is up-going.

\

$(ii)$. Suppose $s>t$.

If $t \in [\mathscr D^k]$,
the box $C_k\cap R_t$ has a unique left and up-going line with label $\ast$ and a unique left going line with label $1$ which is at most down-going by one row.

If $t \notin [\mathscr D^k]$, then for all $i \in [1,t]$, the box $C_k\cap R_i$ has a unique left going line with label $1$ and no other left going lines.  It is up-going.

\

$(iii)$. Suppose $s=t$.  Then every box in $C_k\cap R^{t-1}$ has a unique left going line with label $1$. The box $C_k \cap R_t$ has a unique left going line with label $\ast$ and possibly a left going line with label $1$.  These lines are up-going and in this the last by the hypothesis $s=t$.

\

\textbf{N.B.} By Lemma \ref {5.2}, $C_k$ can be assumed to be \textit{any} column $C''$ of $\mathscr D$.

In this $C''=C_k$ in \ref {5.4} means that $\mathscr D^k$ consists of  all columns strictly to the left of $C''$ and computing $\mathscr T(\infty)$ relative to $\mathscr D^k$.

\

The composition tableau $\mathscr T'$ together with Lemmas \ref {5.2} and \ref {5.3.2} and \ref {5.4} must recover $\ell(\mathscr D)$.  We show below how many properties of $\ell(\mathscr D)$ given in \cite [5.4]{FJ2} are obtained directly and in a much easier fashion using $\mathscr T(\infty)$ and \ref {5.4}.

\subsection {Lines with label $1$}\label{5.5}

\

 \textbf{Notation}. Let $C,C'$ denote a pair of neighbouring columns of $\mathscr D$ having height $t$, with $C$ to the left of $C'$.  Let $R^t[C,C']$ be the set of boxes lying in $R^t$ between $C,C'$.   We shall write $]C,C']$ for the boxes strictly to the right of $C$, and so on.

  Recall \ref {5.4} (ii),(iii). Let $l_\ast$ denote the unique left going line from $C'\cap R_t$ with label $\ast$.

\subsubsection {Left Going Lines}\label {5.5.1}

 \begin {cor} A box $b$ in $R^t]C,C'[$ has a unique left going line labelled by a $1$.  Its left end point lies in $R^t[C,C'[$.

  \end {cor}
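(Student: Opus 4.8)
The plan is to exploit the explicit description of left-going lines supplied by \ref{5.4} together with the profile-staircase structure encoded in the composition tableau $\mathscr T(\infty)=\mathscr T'$ (Proposition \ref{4.9}). Fix a box $b$ in $R^t\,]C,C'[$, so $b$ lies strictly to the right of $C$ and strictly to the left of $C'$, and in a row $R_i$ with $i\le t$. First I would invoke Lemma \ref{5.2}: by removing columns from the right of $\mathscr D$ we may assume that $b$ lies in the last column of the diagram under consideration, or more precisely arrange that the column immediately to the right of $b$ plays the role of $C_k$ in the stabilisation analysis of \ref{5.3}. Since $C,C'$ are neighbouring of height $t$ and $b$ lies strictly between them in $R^t$, every column occurring between $C$ and $C'$ has height $<t$, and in particular the column hosting $b$ has height $\ge i$ but the ambient heights are controlled by $t$.

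Next I would produce the line itself. The existence of a unique left-going line labelled by a $1$ from $b$ should follow from case $(i)$ (or the relevant sub-case of $(ii)$) of \ref{5.4}: a box sitting in $R^t$ to the right of $C$ but before $C'$ is, relative to the truncated diagram, one of the boxes $C_k\cap R_i$ with $i$ in the admissible range, so it carries a unique left-going line with label $1$ which is up-going. The key point is that the \textbf{exceptional} cases in \ref{5.4} — the box carrying instead a $\ast$, or carrying no line — are exactly the boxes in the lowest row $R_t$ of a column, or the box with $r_{s+1}=0$; these are precisely excluded because $b$ lies strictly between $C$ and $C'$ and the relevant $r$-value is forced to be nonzero by $t\le\height\mathscr D^k$ as recorded at the start of \ref{5.2}. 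This is the step I expect to require the most care: one must check that for \emph{every} box of $R^t\,]C,C'[$ — not merely those in $R_t$ — the applicable clause of \ref{5.4} yields label $1$ and never $\ast$, which amounts to verifying that such a box is never the $\ast$-carrying lowest box of some truncated last column.

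Finally I would locate the left endpoint. Since the line is up-going (\ref{5.3.1} and the up-going assertions of \ref{5.4}), its left end lies in a row $R_{j}$ with $j\le i\le t$, hence in $R^t$. That its endpoint lies in $[C,C'[$ rather than strictly left of $C$ is where the neighbouring hypothesis does the work: the endpoint is the box $b(r)$ in the truncated diagram carrying the relevant composition-map value $r$, and by Corollary \ref{4.4} together with the profile-staircase description of \ref{4.6}, the repeated entries propagate only as far left as the relevant left neighbour, so $b(r)$ cannot lie to the left of $C$. Because $C$ has height $t$, no box strictly left of $C$ in $R^t$ is reached by a $1$-labelled line terminating inside the window $]C,C'[$; thus the endpoint lies in $R^t[C,C'[$. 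Assembling the existence, uniqueness (from the word ``unique'' in the cited clauses of \ref{5.4}, equivalently \cite[5.4.8(vi)]{FJ2}) and the endpoint localisation gives the corollary.
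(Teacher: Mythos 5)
Your skeleton is the same as the paper's: truncate so that the column $C''$ containing $b$ plays the role of the last column (Lemma \ref{5.2} and the \textbf{N.B.} of \ref{5.4}), read off existence and uniqueness of the $1$-labelled left going line from \ref{5.4}, and then use $\mathscr T(\infty)$ to prevent the left end point from escaping past $C$. However, the step you yourself flag as requiring the most care is settled by a false claim. A box of $R^t]C,C'[$ \emph{can} be the $\ast$-carrying lowest box of a truncated last column: take the composition $(1,2,1,2)$, $t=2$, $C=C_2$, $C'=C_4$, and $b=b(4)$, the single box of $C_3$. Truncating at $C_3$ one has $\height C_3=1\in[\mathscr D^3]=\{1,2\}$ and $|\mathscr D^3|=2>\height C_3$, so clause $(ii)$ of \ref{5.4} applies: $b(4)$ carries a left going line with label $\ast$ (to $b(2)$) \emph{and} a unique left going line with label $1$ (to $b(3)$) --- this is exactly the paper's own example for $(1,2,1)$ in \ref{4.7}, and Corollary \ref{5.6.1} says precisely that such $\ast$-carrying bottom boxes do occur inside $]C,C'[$ whenever a column there has a left neighbour. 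What rescues the Corollary is not that this configuration is excluded, but that in clause $(ii)$ the $\ast$-line always coexists with a unique $1$-line. The cases genuinely needing exclusion are clause $(i)$ with $r_{s+1}=0$, which you exclude correctly, and clause $(iii)$, whose lowest box may carry a $\ast$ but no $1$-line; the latter cannot arise here because the height-$t$ column $C$ lies strictly left of $C''$ and $\height C''\neq t$, forcing $|\mathscr D''|\geq t\neq\height C''$ and hence clause $(ii)$ whenever $b$ is a lowest box.

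The same example defeats your third paragraph, where you assert the line is up-going and deduce the end point lies in $R^t$: the $1$-line from $b(4)$ to $b(3)$ goes \emph{down} one row, and clause $(ii)$ of \ref{5.4} only promises ``at most down-going by one row'' at a lowest box. The end point still lands in $R^t$, but only because in that case $\height C''\leq t-1$, so one extra row stays within $R^t$; the paper states this dichotomy explicitly and without it your ``hence in $R^t$'' does not follow. Finally, your localisation of the end point is asserted rather than proved: ``repeated entries propagate only as far left as the relevant left neighbour'' is not the operative mechanism, since propagation in $\mathscr T(\infty)$ is rightward and weakly downward. The paper's actual argument is a blocking one: if the end point lay strictly left of $C$, its entry could occur in column $C$ (which is full down to $R_t$) only below $R_t$, hence by Corollary \ref{4.4} and downward propagation it would occur in every column to the right of $C$ only below $R_t$, contradicting the fact that the $1$-line to $b$ forces that entry to occur within $R^t$ in a column at or right of $C$. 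You need to spell this out; as written, your last sentence assumes what is to be proved.
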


 \begin {proof}  Let $C''$ be the column containing $b$.  Since $C,C'$ are neighbours of height $t$ one has $\height C'' \neq t$.  Taking $C''=C_k$ in \ref {5.4}, $l$ must be up-going unless  $\height C''=t' \leq t-1$ and then  at most down-going by just one row. Thus its left end point $b'$ lies in $R^t$.

Recall \textbf{N.B.} of \ref {5.4}.

 If $b'$ lies strictly to the left of $C$, then the entry $s$ of $b'$ would have to appear below $R_{t+1}\cap C$ in $\mathscr T(\infty)$ and so $s$ could not appear in $R^t$ in any column of $\mathscr T(\infty)$ to the right of $C$.  This excludes the right end-point of $l$ lying above  $R_{t-1}$ in $C''$, if $\height C'' \leq t-1$, or above $C''\cap R_t$ if $\height C'' >t$.  Recalling the first paragraph, this gives a contradiction proving the Corollary.

 \end {proof}

 \subsubsection {Right Going Lines}\label {5.5.2}

 \begin {cor}  A box $b$ in $R^t[C,C'[$ has a unique right going line labelled by a $1$ with end point in $R^t]C,C']$, unless this line is $\ell_\ast$.
 \end {cor}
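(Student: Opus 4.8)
The plan is to prove the dual statement to Corollary \ref{5.5.1} by exploiting the essential symmetry between left-going and right-going lines labelled by $1$. The target statement asserts that a box $b$ in $R^t[C,C'[$ has a \emph{unique} right-going line labelled by $1$ whose endpoint lies in $R^t]C,C']$, with the sole exception being when that line coincides with $\ell_\ast$. First I would apply \ref{5.4} with the roles reversed: where Corollary \ref{5.5.1} viewed $b$ in $R^t]C,C'[$ and produced a left-going $1$-line, here I take $b$ in $R^t[C,C'[$ and want the outgoing $1$-line to the right. By \ref{5.4}(ii),(iii) read from the perspective of the neighbouring columns $C,C'$ of height $t$, every box in $R^t$ strictly between (or at the left member $C$) carries a right-going line labelled by $1$ reaching into $]C,C']$, and the minimal-distance/extremality bookkeeping forces the endpoint to stay within $R^t$, possibly down-going by at most one row.

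The key steps, in order, are as follows. I would first invoke the existence of \emph{some} right-going $1$-line from $b$: since $b$ lies weakly to the left of the neighbour $C'$ of height $t$ and is not itself right-extremal in the relevant truncation (as $C'$ sits to its right with height $t$), Lemma \ref{3.3} together with \ref{5.4} guarantees a right-going line labelled by $1$. Second, I would pin down its endpoint: the up/down-going constraints of \ref{5.4} (at most down-going by one row, and up-going otherwise) confine the right endpoint to $R^t$; an argument parallel to the one in Corollary \ref{5.5.1}, tracking where the entry of $b$ can reappear in $\mathscr{T}(\infty)$ along the columns to the right, forces the endpoint into $R^t]C,C']$ rather than overshooting past $C'$. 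Third, uniqueness follows from \cite[5.4.8(vi)]{FJ2}, which states that a box may carry at most one right-going line labelled by $1$. Fourth, I would isolate the exceptional case: the box $C'\cap R_t$ is the unique recipient of the $\ast$-labelled line $\ell_\ast$ described in \ref{5.5}, and when the right-going $1$-line from $b$ would terminate exactly there, it is this $\ell_\ast$ that one sees, accounting for the stated exception.

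The main obstacle I expect is the endpoint-confinement step, namely ruling out that the right-going $1$-line escapes the block $]C,C']$ and lands past the right neighbour $C'$. This is precisely the place where the neighbouring-column hypothesis and the $\mathscr{T}(\infty)$ reconstruction must be used delicately: the argument has to show that if the endpoint lay strictly to the right of $C'$, then the entry of $b$ would be forced to reappear in a row and column configuration of $\mathscr{T}(\infty)$ incompatible with the extremality recorded by the composition tableau, mirroring the contradiction extracted in the proof of Corollary \ref{5.5.1}. The delicacy is that, unlike the left-going case where the $\ast$-line is uniquely anchored at $C'\cap R_t$, on the right side one must carefully separate genuine $1$-lines from the single $\ell_\ast$, so the bookkeeping of which line carries which label near row $R_t$ is where errors would most easily creep in. Once the endpoint is trapped in $R^t$ and the coincidence-with-$\ell_\ast$ case is extracted, uniqueness via \cite[5.4.8(vi)]{FJ2} closes the argument cleanly.
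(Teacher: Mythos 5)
Your proposal diverges from the paper's proof in a way that matters, and the two places where it leans hardest are exactly where it is weakest. First, the crux of the statement is the confinement step, and you explicitly defer it to ``an argument parallel to Corollary \ref{5.5.1}'' without supplying the idea that actually makes it work. The paper's proof is a direct tracking of the entry $r$ of $b$ through the rules $(i)$--$(iv)$ of \ref{4.7}: since $C,C'$ are \emph{neighbouring} columns of height $t$, no column strictly between them has height $t$, so rule $(iii)$ can never push $r$ below $R^t$ while it is strictly inside $]C,C'[$; consequently $r$ is stopped at the first column strictly between $C,C'$ of height $>t$ (giving the $1$-line into that column), or else it arrives at the column just left of $C'$, where either it is blocked by an occupied box of $C'$ (giving a $1$-line into $C'$), or it sits in $R_t$ and performs the staircase step into $C'\cap R_{t+1}$ --- and in that one case the line from $b$ into the interval is precisely $l_\ast$. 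This height-$t$ dichotomy is the entire engine of the proof; existence, confinement and the identification of the exceptional box all fall out of it at once, and your sketch never identifies it. Relatedly, your existence step (``$b$ is not right-extremal in the relevant truncation'') is false for exactly the exceptional box: the left end-point of $l_\ast$ \emph{is} right extremal in the diagram whose rightmost column is $C'$, as the paper notes parenthetically at the end of its proof.

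Second, your appeals to Lemma \ref{3.3} and to \cite[5.4.8(vi)]{FJ2} for existence and uniqueness are out of place in Section \ref{5}. The declared purpose of that section (\ref{5.1}) is to prove everything about $\ell(\mathscr D)$ \emph{as defined by} $\mathscr T(\infty)$ and \ref{5.4}, without the combinatorics of \cite[5.4]{FJ2}; Lemma \ref{3.3} and \cite[5.4.8(vi)]{FJ2} are statements about the lines of the \emph{old} construction, and they transfer to the present lines only through the equivalence $\mathscr T(\infty)=\mathscr T'$ (Proposition \ref{4.9} together with Sections \ref{3}--\ref{4}), i.e.\ through precisely the heavy machinery this section is designed to bypass. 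In the paper's framework no external uniqueness citation is needed: an entry of $\mathscr T(\infty)$ is stopped at most once, and the right going line with label $1$ from $b$ is \emph{by definition} the line to the column just before which its entry is stopped, so uniqueness is built into the construction. As written, your proof would make Theorem \ref{5.8} and Section \ref{5.9} depend again on \cite[5.4]{FJ2}, defeating the point of the reconstruction, and even granting that dependence it still leaves the confinement step --- the real content --- unproved.
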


 \begin {proof} Let $r$ be the entry of $b$ in $\mathscr T$.  In $\mathscr T(\infty)$, $r$ is moved to the right through the rules of \ref {4.7}. If it stopped (via (ii),(iv)) just before some column $C''$, then $b$ is joined to a box in $C''$ .


  If in the transition $\mathscr T(s) \mapsto \mathscr T(s+1)$ the entry $r$ is not stopped, then by (i),(iii) of \ref {4.7}, it is moved rightwards and enters $R_{t'}$ for some $t'\leq t$. (This is because no column strictly between $C,C'$ can have height $t$).  Consequently at the first column $C''$  strictly between $C,C'$ of height $>t$, the entry $r$ is stopped.

  Finally let $C''$ be the column just to the left of $C'$.  If in $\mathscr T(\infty)$ one has $r \in C''\cap R_t$, then $b$ is joined to $C'\cap R_t$ by $l_\ast$.  Conversely the entry of $C''\cap R_t$ must also be an entry of some $b \in R^t[C,C'[$, because $r$ moves right and downwards and $\height C=t$.  Then $b$ is the left end-point of $l_\ast$. (One might add that in addition $r$ enters $ C'\cap R_{t+1}$, so $b$ is extremal in the diagram whose rightmost column is $C'$.)
 \end {proof}

 \textbf{Remark.} This calculus, though perhaps not quite at the level of the ladies who collected tickets in the London Underground, is certainly at the level of Oxford Undergraduates even those reading PPE\footnote {A soft option for Greats which traditionally trained the first class minds of England.} and in any case much easier than the torturous argument leading to \cite [5.4.8(v)]{FJ2}.

 \subsubsection {Addendum}\label {5.5.3}

 Independent of the last part of the above proof we have

 \begin {cor}  There is a unique left going line from $C'$ to a box $b$ in $R^t[C,C'[$.  It has label $1$ if $b\in R^{t-1}$ and has label $\ast$ if $b \in R_t$  (in the latter case the line is just $l_\ast$).
 \end {cor}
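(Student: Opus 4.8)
The plan is to reduce at once to the case $C'=C_k$, the last column of $\mathscr{D}$, which is permitted by the \textbf{N.B.} of \ref{5.4}: then $\mathscr{D}^k$ consists of the columns strictly to the left of $C'$ and the left going lines issuing from $C'$ are exactly the lines of $\ell(\mathscr{D}^k,C')$. Since $C$ has height $t$ and lies to the left of $C'$ we have $t\in[\mathscr{D}^k]$ and $|\mathscr{D}^k|\ge t$, so only cases (ii) and (iii) of \ref{5.4} can occur. First I would extract from these, together with the stabilisation of \ref{5.3.1} for the rows above $R_t$, the complete list of left going lines out of $C'$: for each $i\in[1,t-1]$ the box $C'\cap R_i$ carries a single up-going line of label $1$, while the lowest box $C'\cap R_t$ carries the label-$\ast$ line $l_\ast$ of \ref{5.3.2}(i) together with at most one further line of label $1$. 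Uniqueness of the line at each box of $C'$ is already part of \ref{5.4}, and the left endpoints are pairwise distinct because the composition-map values $r_i$ are, as recorded in \ref{2.4}.

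Next I would place every such endpoint inside $R^t[C,C'[$, by the argument already used for Corollary \ref{5.5.1}. Reading the lines off $\mathscr{T}(\infty)$, the left endpoint of the line reaching $C'$ is a box of $\mathscr{T}$ whose entry appears at $C'$ within $R^t$; were that box strictly to the left of $C$, then, since $\height C=t$, rules (iii)--(iv) of \ref{4.7} would force its entry below $R_{t+1}\cap C$, after which it could never reappear in $R^t$ in a column to the right of $C$, contradicting its presence at $C'$. Hence all endpoints lie in $[C,C'[$, and as the lines are up-going — the one exception being the extra label-$1$ line of \ref{5.3.2}(i), which when down-going leaves $R^t$ into $R_{t+1}$ and is thereby excluded from the region — the surviving endpoints lie in $R^t$, that is in $R^t[C,C'[$.

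The decisive and, I expect, hardest step is the correlation of label with the row of the endpoint $b$. The up-going label-$1$ lines emanate from rows $i\le t-1$ and so end in rows $\le t-1$, whence every label-$1$ line meeting the region ends in $R^{t-1}$; this is one half of the dichotomy. For the other half one must show that the \emph{only} line ending in $R_t$ inside the region is $l_\ast$, equivalently that the neighbouring-column hypothesis on $(C,C')$ prevents a label-$1$ line from ending horizontally in $R_t[C,C'[$. Here I would return to the composition-map description of Theorem \ref{3.6} and Corollary \ref{4.1} to identify the endpoints of the two lines at $C'\cap R_t$: the extra label-$1$ line ends at $b(r_{t+1})$, which the staircase analysis should exhibit either as a right extremal box of $R_{t+1}$ (down-going, hence out of $R^t$) or as the box $b(\mathscr{S})$ of a right extremal staircase lying strictly above $R_t$, so that it never lands in $R_t$; the line landing in $R_t[C,C'[$ is then forced to be $l_\ast$ of label $\ast$. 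The genuinely subtle bookkeeping sits in the lowest-row analysis of \ref{5.3.2}, and it is precisely the absence of a further height-$t$ column between $C$ and $C'$ that must be used to rule out the competing horizontal label-$1$ line; combined with the uniqueness already established, this would complete the proof.
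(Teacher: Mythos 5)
Your outline essentially coincides with the paper's (its entire proof is the one line ``Combine \ref{5.4} with Corollary \ref{5.5.2}''): reduce to $C'=C_k$ by the N.B. of \ref{5.4}, list the lines issuing from $C'$ via \ref{5.4} together with \ref{5.3.1}--\ref{5.3.2}, and place their left end points by the entry-tracking argument of Corollary \ref{5.5.1}; your uniqueness-per-box argument from the pairwise distinctness of the $r_i$ is also sound. The difference is only that you lean on \ref{5.5.1} and the staircase analysis where the paper cites \ref{5.5.2}, and the entire difficulty then sits exactly where you put it: excluding the possible second, label-$1$, line from $C'\cap R_t$, namely the one ending at $b(r_{t+1})$, from landing in $R_t[C,C'[$.

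Precisely at that step your argument fails. You assert that when $b(r_{t+1})$ is a staircase box $b(\mathscr S)$ it lies ``strictly above $R_t$, so that it never lands in $R_t$''. The staircase in question has depth $t$, but nothing forces its height $u$ to be $<t$: it may consist of the single column $C$ itself (possible as soon as $C$ has a left neighbour), and then $b(\mathscr S^t_t)$ is the box left adjacent to $C\cap R_t$, which lies in row $t$. Concretely, for the composition $(1,1,1)$ and $(C,C')=(C_2,C_3)$, so $t=1$, the label-$1$ line from $C_3\cap R_1$ ends at $b(1)\in R_1=R_t$. What saves the Corollary is not the row of this end point but its column: in every case $b(\mathscr S)$ lies strictly to the left of $C$, so the line ``hops over'' $C$ and its end point falls outside $[C,C'[$. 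This hopping-over is exactly what the paper records in the Examples following \ref{5.5.3} and re-uses in the proof of Proposition \ref{5.7}; your proof never establishes it, and without it the competing label-$1$ line into $R_t[C,C'[$ is not excluded, which is the whole content of the second half of the statement. A related slip: the ``first half'' you prove (every label-$1$ line from $C'$ meeting the region ends in $R^{t-1}$) is just the contrapositive of this same second half, not an independent half; and the label cannot in fact be read off the row of the end point $b$, since $l_\ast$ itself may terminate strictly above $R_t$ (for $(2,1,1,2)$ it joins $C_4\cap R_2$ to $b(3)\in R_1$). The dichotomy is governed by the row in which the line is attached to $C'$, as \ref{5.4} gives it and as it is used in \ref{5.7}, and any correct proof must be organised around that reading.
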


 \begin {proof}  Combine \ref {5.4} with Corollary \ref{5.5.2}.
 \end {proof}

 \textbf{Examples}.   If $r_{t+1}\neq 0$, then there is a left going line $l$ from $C\cap R_t$. However either the left hand end-point of $l$ is strictly below $R_t$ (as for the composition $(1,2,1)$), or strictly to the left of $C'$.
 In the latter case $l$ must ``hop over'' $C'$.  If $b \in R^t$, this can only happen if $C'$ has a left neighbour for example in the case $(1,1,1)$ or the case $(2,1,1,2,2)$.

In \cite [5.4.7]{FJ2}, it was said that $l$ was obtained by the ``gathering of loose ends'' - another complication of \cite [5.4.7]{FJ2} which an automatic consequence of our present construction.  This ``hopping over'' will be further illustrated in \ref {5.6.3}.

\subsection {Lines with Label $\ast$}\label{5.6}

\subsubsection {Left going Lines}{\label {5.6.1}

\begin {cor}  A box in $R^t_{C,C'}$ has at most one left going line with label $\ast$.  This occurs exactly when it lies at the bottom of a column $C''$ of height $t' \leq t$ having a left neighbour in $\mathscr D$.  Moreover its left end point lies in $R^t_{C,C'}$.
\end {cor}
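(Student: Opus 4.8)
The plan is to run the two-part strategy of Corollaries \ref{5.5.1}--\ref{5.5.3}: read off the coarse structure from \ref{5.4}, then pin down the left end point by the push-down argument of Corollary \ref{5.5.1}. Using the \textbf{N.B.} of \ref{5.4} I would assume the column $C''$ containing the box $b$ in question is the last column $C_k$, with $\mathscr D^k$ the columns strictly to its left, and set $t'=\height C''$, $s=|\mathscr D^k|$. By \ref{5.4}(i)--(iii) an $\ast$-labelled left going line issues only from the lowest box $C''\cap R_{t'}$, and there is at most one such; every box strictly above carries only label-$1$ lines. Hence a box of $R^t_{C,C'}$ has at most one $\ast$-line, and only if it is the bottom box of its column $C''$. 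Since $b\in R^t$ we get $t'\leq t$, with strict inequality whenever $C''$ lies strictly between $C$ and $C'$, because then $C''$ cannot share the common height $t$ of the neighbours.

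Next I would extract existence from the trichotomy of \ref{5.4}. In case (i), $s<t'$, there is no $\ast$-line, and indeed $C''$ is strictly taller than all columns to its left and so has no left neighbour. In case (ii), $s>t'$, an $\ast$-line is present exactly when $t'\in[\mathscr D^k]$, that is exactly when some column to the left of $C''$ has height $t'$, which is precisely the condition that $C''$ possess a left neighbour. In case (iii), $s=t'$, the $\ast$-line is always present and, the maximal height $t'$ being attained to the left, $C''$ again has a left neighbour. Thus the $\ast$-line exists if and only if $C''$ has a left neighbour in $\mathscr D$, which is the middle assertion.

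For the end point I would appeal to Lemma \ref{5.3.2}(i): the $\ast$-line joins $C''\cap R_{t'}$ to $b(r_{t'})$ and is up-going, so its left end point lies in $R^{t'}\subseteq R^t$, and, issuing from the column $C''$ which sits strictly to the left of $C'$, it is strictly to the left of $C'$. The one remaining point is that it does not escape to the left of $C$. By Corollary \ref{4.1} and Theorem \ref{3.6}, $r_{t'}$ is the entry either of $R_{t'}\cap C^{t'}$, where $C^{t'}$ is the \emph{rightmost} column of $\mathscr D^k$ of height $\geq t'$, or of some $b(\mathscr S)$ with $\mathscr S$ a right extremal column staircase. In the first case, since $C$ itself lies in $\mathscr D^k$ with height $t\geq t'$, the rightmost such column $C^{t'}$ lies weakly to the right of $C$, so $b(r_{t'})$ lies weakly to the right of $C$. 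Equivalently, and this is the form I would use to cover the staircase case uniformly, were $b(r_{t'})$ strictly to the left of the height-$t$ column $C$, then by the push-down argument of Corollary \ref{5.5.1} its entry could not resurface in $R^t$ to the right of $C$, contradicting that $b(r_{t'})$ is the rightmost extremal box of $\mathscr D^k$ in row $t'$. Hence the left end point lies in $R^t_{C,C'}$.

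The hard part is this last step: in contrast to the label-$1$ down-going line of \ref{5.5.3}, which may legitimately ``hop over'' $C'$, the $\ast$-line must be shown \emph{not} to hop leftwards past the tall column $C$. The crux is therefore the identification, via \ref{4.1} and \ref{3.6}, of the landing box $b(r_{t'})$ as the rightmost extremal box of $\mathscr D^k$ in row $t'$, together with the push-down mechanism of \ref{5.5.1}; the staircase subcase, where this box is some $b(\mathscr S)$ rather than simply $C^{t'}\cap R_{t'}$, is where one must argue most carefully.
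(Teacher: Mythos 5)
Your proposal follows the paper's own proof: the paper obtains the first two assertions as "immediate from \ref{5.4}(ii),(iii)" and gets the location of the left end point "as in the proof of the second part of Corollary \ref{5.5.1}" — precisely the trichotomy analysis and push-down argument you use. Your extra detour through Corollary \ref{4.1} and Theorem \ref{3.6} (and the slightly imprecise phrasing of the contradiction as concerning "the rightmost extremal box in row $t'$") is unnecessary but harmless, since the push-down mechanism of Corollary \ref{5.5.1}, which you also invoke, already handles both cases uniformly.
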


\begin {proof} This first part is immediate from \ref {5.4}(ii),(iii).  The second part follows as in the proof of the second part of Corollary \ref {5.5.1}.
\end {proof}

\subsubsection {Right going Lines}{\label {5.6.2}

Recall (Definition \ref {3.2}) the notion of a column staircase $\mathscr S_u^c$ and the notation $b(\mathscr S^u_c)$. The last column in $\mathscr S^u_c$ (which has height $c$) will be denoted by $C'(c)$.

\begin {lemma} A box $b \in \mathscr D$
has a right going line labelled by a $\ast$ if and only if it is $b(\mathscr S^c_u)$  of a non-empty column staircase $\mathscr S^c_u$ and then it has $c-u+1$ such lines successively going down by $0,1,2,\ldots,c-u$ rows.

Let $C''(c+1)$ (resp. $C''(c)$) be the first column strictly to the right of $C'(c)$ of height $>c$ (resp. $c$) if such exists.  Then $b$ has a right going line labelled by $1$ going down by $c-u+1$ (resp. $c-u$) rows to $C''(c+1)\cap R_{c+1}$ (resp. $C''(c) \cap R_c$).
\end {lemma}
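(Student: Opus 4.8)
The plan is to recover both the $\ast$-lines and the trailing $1$-line by reading the composition tableau $\mathscr{T}(\infty)=\mathscr{T}'$ (Proposition \ref{4.9}) and translating its entries into lines via Lemma \ref{5.3.2} and \ref{5.4}. Fix $b=b(\mathscr{S}^c_u)$ and let $r$ be its entry. By Lemma \ref{4.6.2} (together with Proposition \ref{4.9}) the entry $r$ occurs in $\mathscr{T}(\infty)$ exactly along the profile staircase of $\mathscr{S}^c_u$: it descends from $R_u$ to $R_{c+1}$, dropping by one row precisely at each staircase column $C'_{i_j}$, where it lands in $R_{j+1}\cap C'_{i_j}$, one row below the bottom $C'_{i_j}\cap R_j$. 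The key translation step is Lemma \ref{5.3.2}(i), applied with $C'_{i_j}$ taken momentarily as the last column (legitimate by the \textbf{N.B.} of \ref{5.4} and Lemma \ref{5.2}): since its left neighbour $C_{i_j}$ has height $j$ we have $j\in[\mathscr{D}^k]$, so the occurrence of $r$ just below the bottom of $C'_{i_j}$ is exactly the record of a left going line labelled $\ast$ from $C'_{i_j}\cap R_j$ to $b$. As $b\in R_u$ while $C'_{i_j}\cap R_j\in R_j$, the associated right going $\ast$-line descends by $j-u$ rows, giving the asserted $c-u+1$ lines going down by $0,1,\ldots,c-u$.

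For the reverse implication I would read Corollary \ref{5.6.1} from the right-hand endpoint: any right going $\ast$-line terminates at the bottom of a column $C''$ possessing a left neighbour, so $C''$ is a member of a (maximal) column staircase $\mathscr{S}$. Applying the forward direction to $\mathscr{S}$ produces a $\ast$-line from $b(\mathscr{S})$ to that same bottom, and the uniqueness clause of Corollary \ref{5.6.1} then forces the source of the given line to be $b(\mathscr{S})$. This establishes the stated equivalence.

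For the trailing $1$-line I would examine the terminal occurrence of $r$, which lies in $R_{c+1}$ just past $C'(c)=C'_{i_c}$. Maximality of $\mathscr{S}^c_u$ rules out a further height-$(c+1)$ column with a left neighbour lying immediately to the right, so $r$ never descends below $R_{c+1}$. Inspect the first column strictly to the right of $C'(c)$ of height $\geq c$. If it has height exactly $c$, it is $C''(c)$ and $r$ enters $R_{c+1}\cap C''(c)$ by rule \ref{4.7}(i); the $r_{t+1}$-clause of Lemma \ref{5.3.2}(i) then records a line labelled $1$ from $C''(c)\cap R_c$ to $b$, descending by $c-u$ rows. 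If instead it has height $>c$, it is $C''(c+1)$, and the last part of Proposition \ref{3.5.3} gives a line labelled $1$ from $b$ to $C''(c+1)\cap R_{c+1}$, descending by $c-u+1$ rows. In either case the label is $1$ rather than $\ast$ exactly because one is now in the $r_{t+1}$-clause, and no longer in the $r_t$-clause, of Lemma \ref{5.3.2}(i).

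The step I expect to be the main obstacle is the one-row bookkeeping fused with the correct assignment of labels. Every occurrence of $r$ sits one row below the box actually carrying its $\ast$-line, and one must keep this offset consistent while separating the $c-u+1$ lines labelled $\ast$ (the successive $r_t$-clauses at the staircase columns) from the single terminal line labelled $1$ (the final $r_{t+1}$-clause), and while checking that it is precisely the maximality of $\mathscr{S}^c_u$ that halts the descent at $R_{c+1}$. One must also verify, in the $C''(c)$ case, that the box $C''(c)\cap R_c$ simultaneously carries its own independent $\ast$-line belonging to a neighbouring staircase, so that no conflict arises with the uniqueness in Corollary \ref{5.6.1}.
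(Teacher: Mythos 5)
Your overall route is the paper's own: propagate the entry $r$ of $b(\mathscr S^c_u)$ through $\mathscr T(\infty)$ by the rules of \ref{4.7} (the profile staircase of \ref{4.6.2}, \ref{4.9}) and then convert tableau entries into labelled lines via \ref{5.3.2} and \ref{5.4}; your explicit converse via Corollary \ref{5.6.1} is a useful supplement which the paper leaves implicit, and your case split on the \emph{first} column of height $\geq c$ to the right of $C'(c)$ is in fact the correct reading (the paper's own proof, read literally, gives priority to $C''(c+1)$, which contradicts the example of \ref{5.6.3}). However, your translation convention contains a genuine error. By \ref{4.2} and \ref{5.2}--\ref{5.4}, the lines entering a column $D$ are recorded by the entries of $\mathscr T'$ in the column \emph{immediately preceding} $D$ (the composition map relative to the columns strictly to the left of $D$), not by occurrences inside $D$ itself. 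For the $\ast$-lines this off-by-one is harmless, since $r$ occupies both row $j$ of the column preceding $C'_{i_j}$ and row $j+1$ of $C'_{i_j}$. But where you lean on it hardest it is false: $r$ never enters $R_{c+1}\cap C''(c)$ by rule \ref{4.7}(i). That box is filled, before $r$ is even processed (the order $\preceq$ runs through columns right to left), by the head entry of the maximal staircase containing $C''(c)$ --- exactly the adjacent-staircase situation of \ref{5.6.3} --- so $r$ is \emph{stopped} just before $C''(c)$ by rule (ii). Concretely, for $(1,2,1,1,1,2,3)$ and $\mathscr S=\{C_3\}$, the entry $r=2$ never enters $C_4\cap R_2$, which is occupied by $4$. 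Your conclusion survives only because the $r_{t+1}$-clause of \ref{5.3.2}(i) requires precisely the occurrence of $r$ in row $c+1$ of the column \emph{preceding} $C''(c)$, i.e.\ requires $r$ to be stopped there; under your convention the clause would return the adjacent staircase's head rather than $b$.

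The second concrete flaw is the appeal to Proposition \ref{3.5.3} in the $C''(c+1)$ case: that proposition carries the standing hypothesis $c_k>|\mathscr D^k|$, which fails in general when $C''(c+1)$ is made the last column. For the composition $(1,5,1,2)$ with $\mathscr S=\{C_3\}$ and $b=b(2)$, one has $C''(2)=C_4$ of height $2$ while $|\mathscr D^4|=5$, so \ref{3.5.3} is inapplicable; yet the asserted line from $b(2)$ to $C_4\cap R_2$ does exist, being produced by \ref{5.3.2}(ii), since $C_4$ has no left neighbour and hence $2\notin[\mathscr D^k]$. The repair, which is what the paper's proof does, is to apply \ref{5.4} directly to $C''(c+1)$: the occurrence of $r$ in row $c+1$ of the preceding column yields a $1$-line by \ref{5.2}, \ref{5.3.1} when $\height C''(c+1)>c+1$, and by \ref{5.3.2}(ii) when $\height C''(c+1)=c+1$, maximality of $\mathscr S^c_u$ forcing $C''(c+1)$ to have no left neighbour in the latter case. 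Importing Section \ref{3} machinery also runs against the purpose of Section \ref{5}, which is to argue independently of the earlier construction.
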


\begin {proof} Recall the notation of \ref {3.2}. By definition $\mathscr S^c_u$ consists of columns $C'_{i}$ of heights $i=u,u+1,\ldots,c$ each with a left neighbour $C_i$ and with no columns of height $\geq i$ between $C'_{i},C'_{(i+1)}:i<c$. The latter shorter columns can be ignored because they play no role in the subsequent calculation of $\mathscr T(\infty)$.
Let $C_{(u-1)}$ be the unique rightmost column of height $\geq u$ to the left of $C'_{u}$, possibly $C_u$, though generally to its right.  Let $r$ be the entry in the box $C_{(u-1)}\cap R_u$, which by definition is just $b:=b(\mathscr S^u_c)$.

Recall the rules $(i)-(iv)$ of \ref {4.7} determining $\mathscr T(\infty)$. By these, $C_{(i-1)'}\cap R_{i}$ acquires the entry $r$ for all $i=u,u+1,\ldots,c$.  By Proposition \ref {4.8}, and \ref {5.4} applied to $C_{i'}$, we obtain a line, labelled by a $\ast$,  from $b$ to the box $b_{i}$ in $C_{i'}\cap R_{i}$.  It is right and down going by $i-u$ rows: $i=u,\ldots,c$. Again by \ref {5.4} applied to $C(c)$ we obtain a line labelled by a $1$ to $C''(c+1)\cap R_{c+1}$, if it exists or otherwise to $C''(c)\cap R_c$, if it exists, or no line at all.

\end {proof}

\textbf{N.B.}  If $b \in R^t(C,C')$ and $c+1\leq t$ then either $C''(c+1)$ or $C''(c)$ automatically exist!

\subsubsection {Adjacent Column Staircases}{\label {5.6.3}

We need to examine the case when there are adjacent non-empty column staircases, particularly when only the leftmost one has height $>1$. Thus fix $u \in \mathbb N^+$ and consider a right going sequence of column staircases $\mathscr S_u^u(i):i=1,2,\ldots, m,\mathscr S_u^c(m+1)$.  As before we may ignore the columns of height $<u$.  In doing so we obtain $m$ adjacent columns $C'_{i,u}:i=1,2,\ldots, m$ of height $u$
all having left neighbours $C_{i,m}$ and followed by columns $C'_{m+1,j}$ of heights $j=u,u+1,\ldots,c$, also each admitting left neighbours $C_{m+1,j}$.

Let us compute some relevant entries of $\mathscr T(\infty)$.

Let $C'_{0,u}$ be the unique rightmost column of $\mathscr D$ of height $u'\geq u$ strictly  to the left of $C'_{1,u}$ (possibly $C_{1,u}$ itself) and $r_j$ the entry of the box $C'_{j,u} \cap R_u:j=0,1,2,\ldots,m$.  In the inductive construction of $\mathscr T(\infty)$, this entry $r_0$ first enters $C'_{1,u}\cap R_{u+1}$, but is then stopped by the entry $r_1$ of
$C'_{1,u} \cap R_u$ which enters $C'_{2,u}\cap R_{u+1}$, and so on, that is the entry $r_j$ of $C'_{j,u} \cap R_u$  enters $C'_{j+1,u}\cap R_{u+1}$, for all $j=0,1,2,\ldots,m-1$.  Following this, $r_m$ enters the box $C'_{m+1,j}\cap R_{j+1}$ for all $j=u,u+1,\ldots,c$.

Through \ref {5.4} we obtain a horizontal line $l_i$ joining the box in  $C'_{i-1,u}\cap R_u$ to $C'_{i,u}\cap R_u$, for all $i=1,2,\ldots,m$ and a down-going line $l'_i$ joining the box $C'_{m,u}\cap R_u$ to $C'_{m+1,j}\cap R_{j}$, for all $j=u,u+1,\ldots,c$, all labelled by $\ast$ and possibly in addition (see last part of \ref {5.6.2})  a down-going line from $C'_{m,u}\cap R_u$ with label $1$ to $R_{c+1}$ or to $R_c$. In addition \ref {5.4} gives $m$ horizontal lines $l''_k:k=0,1,2,\ldots, m-1$ joining $C'_{m+1-k,u},C'_{m-1-k,u}$ in row $R_u$ labelled by a $1$.

\

\textbf{Example.}   Consider the composition $(1,2,1,1,1,2,3)$.  Then the boxes with the pairs of entries $(2,4),(4,5),(5,6),(5,8)$ are joined by a line labelled by $\ast$ and  the boxes with the pairs of entries $(2,5),(4,6),(5,11)$ are joined by a line labelled by $1$, by the above.   There are also lines labelled by $1$ joining the pairs $(1,2),(3,4),(6,7),(7,9),(8,10)$.  All this is ridiculously simple and the general case is barely more complicated.

\subsection {Composite Lines}\label {5.7}

Retain the above hypothesis and notation.

Recall \cite [4.2.3]{FJ1} that a line is said to be composite if it is a concatenation of lines joining boxes going from left to right.  Two composite lines are said to be disjoint if they do not pass through a common box.

\begin {prop}  There is a unique disjoint union of composite lines passing though every box in $R^t[C,C']$ formed from the unique left-going line $l_\ast$ from $C\cap R_t$ with label $\ast$ together with all the lines with label $1$ joining boxes in $R^t[C,C']$.
\end {prop}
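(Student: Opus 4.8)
The plan is to recast the statement as a path-cover problem for a directed graph and to read off the conclusion from a degree count. I would introduce the directed graph $G$ whose vertices are the boxes of $R^t[C,C']$ and whose edges are the lines with label $1$ having \emph{both} endpoints in $R^t[C,C']$, together with the single distinguished line $l_\ast$ recalled in \ref{5.5} (the label-$\ast$ line terminating at $C'\cap R_t$), every edge being oriented from its left endpoint to its right endpoint. Since a line joins boxes in distinct columns and a right-going line strictly increases the column index, every edge of $G$ strictly increases the column, so $G$ is acyclic; this is the feature that forbids closed composite loops.

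The heart of the argument is to pin down the in- and out-degrees in $G$ from \ref{5.4} and Corollaries \ref{5.5.1}, \ref{5.5.2} and \ref{5.5.3}. First, the boxes of $C\cap R^t$ have in-degree $0$ (being leftmost, they receive no edge from inside the strip) and, by Corollary \ref{5.5.2}, out-degree $1$ (their unique right-going label-$1$ line, or $l_\ast$, lands in $R^t]C,C']$); so they are sources. Dually, the boxes of $C'\cap R^t$ have out-degree $0$ (any right-going line from them leaves the strip to the right) and, by Corollary \ref{5.5.3}, in-degree exactly $1$ inside the strip, the unique such line at the bottom box $C'\cap R_t$ being $l_\ast$; so they are sinks. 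Finally, an interior box of $R^t]C,C'[$ has in-degree $1$ by Corollary \ref{5.5.1} and out-degree $1$ by Corollary \ref{5.5.2}, both endpoints lying in the strip. Hence every vertex of $G$ has in- and out-degree at most $1$, with sources exactly $C\cap R^t$ and sinks exactly $C'\cap R^t$.

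A finite acyclic directed graph in which every vertex has in- and out-degree at most $1$ is a disjoint union of directed paths; since moreover every vertex here has degree at least $1$, these paths partition the vertex set, each running from a source in $C\cap R^t$ to a sink in $C'\cap R^t$, their number being the common value $t=\height C$. These maximal directed paths are precisely the composite lines in the sense of \cite[4.2.3]{FJ1}: each is a concatenation of label-$1$ lines, and exactly one of them, the one terminating at $C'\cap R_t$, finishes with the edge $l_\ast$. Disjointness and the fact that each box lies on exactly one of them are immediate from the bound in- and out-degree $\le 1$, and uniqueness of the decomposition follows because acyclicity forces each maximal path to be the unique one through any of its boxes.

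The step I expect to require the most care is the degree bookkeeping at the bottom box $C'\cap R_t$. By \ref{5.4}(ii),(iii) (equivalently Lemma \ref{5.3.2}(i)) this box carries, besides $l_\ast$, a possible second left-going line with label $1$, and one must check that it contributes no second in-edge to $G$, i.e. that its other endpoint falls \emph{outside} $R^t[C,C']$. This is exactly what the uniqueness clause of Corollary \ref{5.5.3} provides: the left-going line from $C'\cap R_t$ into $R^t[C,C'[$ is unique and equals $l_\ast$, the extra label-$1$ line either descending into $R_{t+1}$ (it is down-going by at most one row) or hopping strictly to the left of $C$. Once this is confirmed the degree profile above is exact, and the path-cover conclusion follows with no further computation.
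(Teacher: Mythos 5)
Your argument is correct, and it is assembled from the same ingredients as the paper's own proof of this proposition --- Corollaries \ref{5.5.1}, \ref{5.5.2}, \ref{5.5.3}, plus the fact (Lemma \ref{5.3.2}(i), cited in the paper's proof via \ref{5.6.3}) that the possible second, label-$1$, left-going line from $C'\cap R_t$ leaves the strip --- but the packaging is genuinely different. The paper builds each composite line explicitly, concatenating left-going label-$1$ lines starting from the boxes of $C'$, and must then exclude the possibility that such a concatenation ``hops over'' $C$ (exits the strip past $C$ at its first step); it does this by a separate counting argument using the rightward concatenations issuing from the $t$ boxes of $C$. In your directed-graph formulation that difficulty essentially dissolves: the out-degree-$0$ vertices are exactly the boxes of $C'$, so the $t$ maximal paths starting at the $t$ sources in $C$ end at $t$ \emph{distinct} boxes of $C'$, and since $C'$ has only $t$ boxes, no box of $C'$ can be left isolated --- which is precisely the ``no hopping over $C$'' assertion. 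For this reason you could, and should, weaken your appeal to Corollary \ref{5.5.3} at the upper boxes of $C'$: you only need in-degree \emph{at most} one there (uniqueness of the left-going label-$1$ line, from \ref{5.4}), because in-degree at least one then follows from your own source/sink count; as written you invoke \ref{5.5.3} for in-degree exactly one, which places more weight on that corollary than the paper itself does (the paper's proof still treats no-hopping as something to be established inside the proof of the proposition). Your treatment of the one genuinely delicate vertex, $C'\cap R_t$ --- checking that the extra label-$1$ line contributes no second in-edge --- coincides with the paper's. Net effect: the degree bookkeeping makes existence, disjointness, coverage, uniqueness and the count $t$ fall out simultaneously, which is tidier than the paper's two-stage construction-plus-counting, at the cost of having to state the degree claims with complete precision.
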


\begin {proof} By Lemma \ref {5.2}, we can assume $C'=C_k$, without loss of generality.  Set $\height C_k=t$.  By Lemma \ref {5.6.3} the left going line from $C'\cap R_t$ with label $1$ if it exists, has end-point outside $R^t[C,C']$ and so can be ignored.  By Corollary \ref {5.5.1} there is a unique concatenation of left going lines with label $1$ from $C\cap R_{t'} :
t' \in [1,t]$, starting with $l_\ast$ if $t'=t$ and ending in $C$  or hopping over $C$.  (Eventually a counting argument shows  that no hopping over $C$ takes place.)

  By uniqueness of  left going lines with label $1$ from a given box and the uniqueness of right going lines in Corollary \ref {5.5.2}, the resulting composite lines are disjoint and of those which meet $C$ there are at most $t$ and exactly $t$ if no hopping over $C$ takes place.

On the other hand by Corollary \ref {5.5.2} also asserts the existence of a unique right going line from any box in $R^t[C,C'[$ to a box in $R^t]C,C']$ labelled by a $1$ unless it be the line $l_\ast$ (which meets $C\cap R_t$).

We conclude that there are exactly $t$ disjoint composite lines passing through every box in $R^t[C,C']$ of which exactly one line has label $\ast$.  In addition no hopping over $C$  by the lines, which comprise these disjoint composite lines, can take place.  Finally uniqueness of these lines with label $1$ implies that all the lines in $R^t(C,C')$  must be used!
\end {proof}

\subsection {The General Case}\label {5.8}

We strengthen the above result retaining the same notation.

\begin {thm}  There is a unique disjoint union $\mathscr U^t[C,C']$ of composite lines passing through all the boxes in $R^t[C,C']$.
\end {thm}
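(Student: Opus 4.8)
The plan is to get existence for free from Proposition \ref{5.7} and to spend all the effort on uniqueness, since that is the genuinely new content: Proposition \ref{5.7} only asserts uniqueness among those disjoint unions built from $l_\ast$ together with lines carrying the label $1$, whereas Theorem \ref{5.8} must compare against \emph{all} disjoint unions of composite lines covering $R^t[C,C']$, in particular those permitted to use the right going label-$\ast$ lines produced by column staircases (Lemma \ref{5.6.2}, \ref{5.6.3}). So I would take $\mathscr U^t[C,C']$ to be the cover furnished by Proposition \ref{5.7} and then show that every competing disjoint cover must coincide with it.

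First I would reduce, via Lemma \ref{5.2}, to the case $C'=C_k$, so that only the lines joining $\mathscr D^k$ and $C$ to the last column are in play and $\mathscr T(\infty)$, computed relative to $\mathscr D^k$, controls everything (see \textbf{N.B.} of \ref{5.4}). Next I would reinstate the counting skeleton already present in the proof of Proposition \ref{5.7}. By Corollary \ref{5.5.1} every box of $R^t]C,C'[$ is the right endpoint of a unique left going line with label $1$ whose left endpoint again lies in $R^t[C,C'[$, so no interior box can be the left end of a composite line; dually, by Corollary \ref{5.5.2} every box of $R^t[C,C'[$ emits a right going line with label $1$ into $R^t]C,C']$ unless that line is $l_\ast$, so no interior box can be a right end. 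Hence in \emph{any} disjoint cover the left ends are exactly the $t$ boxes of $C$ and the right ends exactly the $t$ boxes of $C'$, forcing precisely $t$ composite lines, each running from a box of $C$ to a box of $C'$.

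The heart of the argument is then to show that the incoming segment of every interior box is forced to carry the label $1$. An interior box has at most two candidate incoming segments: its unique left going label-$1$ line (Corollary \ref{5.5.1}) and, by Corollary \ref{5.6.1}, at most one left going label-$\ast$ line, the latter present only at the foot of a column of height $\le t$ with a left neighbour, i.e. only at the boxes appearing in the staircase analysis of \ref{5.6.3}. I would close this with a global parity argument: in the canonical cover each interior box uses both its unique label-$1$ incoming and its unique label-$1$ outgoing segment, so exactly the boxes of $C$ are free on the left and those of $C'$ free on the right. If some interior box were instead entered along its label-$\ast$ segment, the label-$1$ segment it declines to use is left dangling, forcing either its source or the box $b(\mathscr S)$ feeding the staircase to acquire a free end in the interior; this destroys the balance of exactly $t$ composite lines from $C$ to $C'$ and so cannot occur. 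Thus every interior incoming segment carries the label $1$, only $l_\ast$ survives as a label-$\ast$ segment, and the cover collapses to that of Proposition \ref{5.7}.

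I expect the genuine obstacle to be exactly this bookkeeping around the staircase label-$\ast$ lines of Lemmas \ref{5.6.2}--\ref{5.6.3}: since $b(\mathscr S^c_u)$ carries $c-u+1$ outgoing label-$\ast$ lines it has large out-degree, and one must check that in a disjoint cover at most one emanating segment survives and that the down going label-$\ast$ segments never reassemble into an alternative covering. The cleanest way I see to discharge this, rather than a fragile local exchange at each staircase, is to lean on the deterministic construction of $\mathscr T(\infty)$ in \ref{4.7}: by Proposition \ref{4.9} the repeated entries of $\mathscr T(\infty)$ trace single, weakly down-right staircase paths, so each box lies on exactly one entry-trajectory; this prescribes every composite line uniquely and makes the concatenation rigid, which is precisely the uniqueness asserted in Theorem \ref{5.8}.
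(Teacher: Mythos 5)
Your reduction is the same as the paper's: existence comes from Proposition \ref{5.7}, and everything hinges on showing that an \emph{arbitrary} disjoint cover can use no line labelled $\ast$ other than $l_\ast$. But the way you discharge this step has a genuine gap, and it sits exactly where the real difficulty lies. Your parity argument runs: if an interior box is entered along a $\ast$-segment from some $b(\mathscr S)$, then the label-$1$ line into that box dangles, so its source must either become a free end (contradicting the count) or itself exit along a $\ast$-segment, i.e. be another $b(\mathscr S')$. You never show that this chain of exchanges terminates. It is precisely in the adjacent-staircase configuration of \ref{5.6.3} --- where the label-$1$ alternative out of $b(\mathscr S)$ is \emph{horizontal}, not strictly down-going, and consecutive staircase boxes feed one another along a single row --- that the chain can keep passing the dangling line to the next staircase box without ever producing a free end; one needs an induction anchored somewhere to break the cycle. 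The paper supplies exactly this: a downward induction on the row of the right end-point (so that lines landing in strictly lower rows are already forced to carry a $1$, which kills the $\ast$-lines whenever the alternative of Lemma \ref{5.6.2} is strictly down-going), together with a domino argument in the situation of \ref{5.6.3}, where the strictly down-going line (or $l_\ast$) excludes $l_1$, forcing $l''_1$ into the cover, which excludes $l_2$, and so on. Your chain \emph{is} this domino argument in embryo, but without the row induction and the starting point it does not close.

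Worse, the device you invoke to avoid the bookkeeping --- Proposition \ref{4.9} and the rigidity of $\mathscr T(\infty)$ --- cannot do this job: Proposition \ref{4.9} says only that the deterministic algorithm of \ref{4.7} reproduces $\mathscr T'$, i.e. it pins down the set $\ell(\mathscr D)$ of labelled lines and their profile staircases. Theorem \ref{5.8} quantifies over all disjoint unions of composite lines \emph{built from} that fixed set, so uniqueness of the entry-trajectories in $\mathscr T(\infty)$ says nothing about whether two different subsets of $\ell(\mathscr D)$ could each assemble into a disjoint cover; appealing to it here is circular. A smaller point: your opening claim that no interior box can be an end of a composite line does not follow from Corollaries \ref{5.5.1} and \ref{5.5.2} --- the existence of a label-$1$ line into or out of a box does not force a cover to use it; that count must either be built into the definition of the covers (as in the Benlolo--Sanderson setting the paper refers to) or argued, and in your write-up it is assumed.
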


\begin {proof}

The theorem follows from Proposition \ref {5.7} if we can show in  \textit{any} disjoint union $\mathscr U^t[C,C']$ of composite lines passing through all the boxes in $R_{C,C'}^t$, that $l_\ast$ is the only line in the family $\ell^t[C,C'](\mathscr F)$ of lines forming these composite lines, which is labelled by a $l_\ast$. (This was the point of view we adopted in \cite [4.4.11]{FJ1} and \cite [5.4]{FJ2}.)

To exclude all such lines labelled by an $\ast$ excepting $l_\ast$ from  $\mathscr U^t[C,C']$ we prove that any left going line $l \in \mathscr U^t[C,C']$ with right end point in $b\in R_{c+1}$ is labelled by a $1$, excepting $l_\ast$, by downward induction on $c$.  \textit{Then any other left going line to $b$ (necessarily labelled by a $\ast$ through Corollary \ref {5.5.1}) is excluded becuase the union $\mathscr U^t(C,C')$ is required to be \textbf{disjoint}}. Here we can assume $c+1 \leq t$.


Now by Corollary \ref {5.6.2} every line labelled by a $\ast$ has a left end point which is some $b(\mathscr S)\in R^c(C,C')$.  Thus all these lines are excluded as long as the line with label $1$ in the conclusion of this corollary are strictly down-going.

However it can happen that such a line is horizontal.  This situation is handled by \ref {5.6.3}.  Indeed in the notation of \ref {5.6.3}, the horizontal lines $l_i:i=1,2,\ldots,m$ labelled by $\ast$ are successively excluded.  Indeed $l_1$ is excluded by the strictly down-going line labelled by a $1$ (or by $l_\ast$) to $C_{m-1}$.  This forces $l''_1$ to belong to $\mathscr U^t[C,C']$ which in turn excludes $l_2$ and so on.

Thus the assertion of the theorem results.

\end {proof}

As Sherlock Holmes might have said.   Elementary my dear Watson!

\

\textbf{Example.}  In the example following \ref {5.6.3}, taking $t=2$, the line $l_{5,8}$ necessarily belonging to $\mathscr U^t[C,C']$ eliminates $l_{5,6}$ forcing $l_{4,6}\in \mathscr U^t[C,C']$ which in turn eliminates $l_{4,5}$ in turn forcing $l_{2,5} \in \mathscr U^t[C,C']$ in turn eliminating $l_{2,4}$. The disjoint union of composite lines joining $C,C'$ joins the boxes $b(2),b(5),b(8)$ and $b(3),b(4),b(6),b(7)$.  Easy!   The general case is barely more complicated.

\subsection {The existence of a Weierstrass Section}\label {5.9}

As in \cite [5.4.12]{FJ2} the above theorem implies that $(P_1,P_2)$] of cite [5.2] {FJ1} are satisfied by $\ell(\mathscr D)$.  As in \cite [Lemma 4.2.5(i,iii)]{FJ1} these further imply that $e+V$, as defined by $\ell(\mathscr D)$, is a Weierstrass section.  One may remark that $\ell(\mathscr D)$ selects a particular monomial in Benlolo-Sanderson invariant defined by the pair $C,C'$ of neighbouring columns.

We remark that by \cite [6.4.10]{FJ2} (proved in \cite [4.5.3]{FJ3} after much work), the pair $e+V$ determines uniquely an explicitly determined component of the zero fibre $\mathscr N$ containing $e$, namely $\mathscr N^e$.  In general this component does not have a dense $P$ orbit \cite [6.10.7]{FJ2}.  In particular it is not clear if $e$ itself determines this component or can lie in several components of $\mathscr N$.

Obviously an element of $e+v:v\in V$ is non-vanishing on some homogeneous invariant of positive degree if $v\neq 0$ and so $\overline{P(e+V)}\cap \mathscr N =\overline{Pe}$.


\subsection {The Canonical Component $\mathscr N^e$}\label {5.10}

Following \cite [4.4.2]{FJ3} we call $(i,j,k,l)$ a VS quadruplet if $\ell_{i,j},\ell_{k,l}$ (resp. $\ell_{j,k}$) are lines in $\ell(\mathscr D)$ with label $1$ (resp. $\ast$). They are easy to write down and of course determined by the pair $e,V$.

We can enlarge $e$ to $e_{VS}$ by joining to it the co-ordinates $x_{j,l}$ for every VS quadruplet $(i,j,k,l)$. We remark that $e_{VS}+V$ is still a Weierstrass section \cite [4.5.4]{FJ3}. Let $E_{VS}$ be the vector subspace of $\mathfrak m$ spanned by the root vectors in $e_{VS}$.  Miraculously $E_{VS} \subset  \mathscr N^e$ (combine \cite [6.9.8]{FJ2}, \cite [4.5.2]{FJ3}) and even more miraculously
\cite [4.5.3(iii)]{FJ3} the closure of $PE_{VS}$ equals $\mathscr N^e$, whilst this generally fails for $Pe_{VS}$.  Indeed the former fact was used to prove that $\mathscr N^e$ may have no dense $P$ orbit \cite [6.10.7]{FJ2}, without using a super/quantum computer, or even any computer at all, as would be needed for a rank computation.

\section {Reading the Composition Tableau}\label {6}

\subsection {Steps}\label {6.1}

For all $t \in [1,n]$, we retain our convention that $b(t)$ is the unique box in $\mathscr T$ labelled by $t$.

Let $\mathfrak s(t)$ denote the set of boxes in $\mathscr T(\infty)$ whose entries are $t$.  It is a profile staircase (see Proposition \ref {4.9}).
Define a step of $\mathfrak s(t)$ to be an entry of the box in $\mathscr T(\infty)$ just below where $t$ enters going down by one row.  Let $\mathfrak s_S(t)$ denote the set of steps of $\mathfrak s(t)$.  It may be empty.

It follows from the rules (ii),(iii) of \ref {4.7} that a step is an entry of $\mathscr T$ and hence the $\mathfrak s_S(t):t \in [1,n]$ are pairwise distinct.

\subsection {Lines with label $\ast$}\label {6.2}

From the observations in \ref {6.1} we obtain

\begin {lemma}

\

$(i)$.  For all $t \in [1,n]$, the right going lines from $b(t)$ with label $\ast$ meet the $b(s): s \in \mathfrak s_S(t)$.

\

$(ii)$.  For all $s\in [1,n]$, there is at most one left going line from $b(s)$ with label $\ast$.  It exists and meets $b(t)$ when $s \in \mathfrak s_S(t)$.
\end {lemma}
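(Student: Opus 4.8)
The plan is to read off both parts of the Lemma directly from the structure of $\mathscr T(\infty)$, using the description of the profile staircase $\mathfrak s(t)$ established in Proposition \ref{4.9} together with the characterization of lines labelled by a $\ast$ given in \ref{5.6}. The key point is that a step of $\mathfrak s(t)$, by the definition in \ref{6.1}, records exactly a transition where the entry $t$ drops down by one row through rule (iii) of \ref{4.7}, and such a drop occurs precisely at a column $C_{j+1}$ of height $t$ (in $\mathscr D$) possessing a left neighbour. This is exactly the configuration that Corollary \ref{5.6.1} identifies with a box at the bottom of a column having a left neighbour, hence carrying a left going line with label $\ast$.

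For part $(i)$, I would proceed as follows. Fix $t$ and consider the profile staircase $\mathfrak s(t)$ as described in \ref{4.6.2}: the entry $t$ occupies the boxes $C_j \cap R_{m+1}$ for $j \in [i'_m, i'_{m+1}-1]$ as $m$ ranges over the relevant interval. Each step of $\mathfrak s(t)$ is, by \ref{6.1}, the entry $s$ of the box in $\mathscr T$ sitting just below the box where $t$ first enters a new (lower) row; by the remark in \ref{4.6.3}, this lower box already carries the entry $s$ of $\mathscr T$, namely $s=b(s)$ lies at the bottom of the relevant column $C'_{i_m}$ of the staircase. By Lemma \ref{5.6.2}, $b(t)=b(\mathscr S^c_u)$ emits right going lines labelled $\ast$ going down by $0,1,\ldots,c-u$ rows, and these lines terminate exactly in the boxes $C'_{i_m}\cap R_m = b(s)$ for $s \in \mathfrak s_S(t)$. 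Thus the right going $\ast$-lines from $b(t)$ meet precisely the $b(s)$ with $s$ a step, which is the claim.

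For part $(ii)$, I would invoke the uniqueness already packaged in Corollary \ref{5.6.1}: a box $b(s)$ has at most one left going line with label $\ast$, and it has one exactly when $b(s)$ lies at the bottom of a column of height $\le t$ with a left neighbour. This is precisely the configuration under which $s$ arises as a step of some $\mathfrak s(t)$, via rule (iii) of \ref{4.7}. The matching of endpoints then follows from part $(i)$: if $s \in \mathfrak s_S(t)$, the $\ast$-line emanating from $b(t)$ described in part $(i)$ is exactly this unique left going $\ast$-line into $b(s)$, so it meets $b(t)$. The observation at the end of \ref{6.1} that the $\mathfrak s_S(t)$ are pairwise disjoint guarantees that $s$ determines $t$ uniquely, so there is no ambiguity in ``$s \in \mathfrak s_S(t)$.''

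The main obstacle I anticipate is bookkeeping the precise correspondence between the combinatorial ``steps'' of the profile staircase and the geometric right going $\ast$-lines of Lemma \ref{5.6.2}: one must check carefully that the row by which each line descends (the values $0,1,\ldots,c-u$) aligns with where $t$ enters a new row in $\mathscr T(\infty)$, and that the box below (which furnishes the step value $s$) is genuinely the left endpoint $b(\mathscr S)$ at the foot of the staircase column. This identification is essentially the content of the remarks in \ref{4.6.3}, so the proof reduces to citing Proposition \ref{4.9}, Lemma \ref{5.6.2}, and Corollary \ref{5.6.1}, but one should verify there is no off by one discrepancy in the row indexing between the two descriptions.
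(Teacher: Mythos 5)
Your proof is correct and takes essentially the paper's intended route: the paper gives no explicit proof of this lemma (it is stated as following ``from the observations in \ref{6.1}''), and the justification it intends is exactly the combination you supply of Proposition \ref{4.9}, Lemma \ref{5.6.2} (right going $\ast$-lines from $b(\mathscr S^c_u)$ ending at the bottom boxes of the staircase columns) and Corollary \ref{5.6.1}/\ref{5.4} (uniqueness and existence criterion for left going $\ast$-lines), matched against the definition of steps. One small caution: your phrase ``the box sitting just below the box where $t$ first enters a new row'' inverts the geometry — the step is the entry of the box just \emph{above} the box into which $t$ descends (equivalently, the box below which $t$ enters, namely the bottom box of the staircase column, as you yourself correctly identify via \ref{4.6.3}) — but this wording slip, inherited from the paper's own ambiguous definition in \ref{6.1}, does not affect the argument.
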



\subsection {D\'ebut et Fin}\label {6.3}

Take $t \in [1,n]$.  Recall that in the profile staircase $\mathfrak s(t)$, each entry $t$ occurs in at most one column and the columns in which $t$ occur are consecutive.

Define the beginning (d\'ebut) (resp. end (fin)) of the profile staircase $\mathfrak s(t)$ to be the leftmost (resp. rightmost) box $b^d(t)$ (resp. $b^f(t)$) in which $t$ occurs in $\mathfrak s(t)$.  One may remark that $b^d(t)$ is the unique box $b(t)$ in $\mathscr T$ with entry $t$.  Let $C'$ be the column (resp. row $R_u$) in which $b(t)$ occurs in $\mathscr T$ and $C$ the preceding column.

Let $R_v$ denote the row in which $b^f(t)$ occurs. Suppose in the column $C_{b(t)}$  just to the right of that containing $b^f(t)$, there is a box which occurs in $\mathscr D$ either in $R_{v-1}$ or in $R_v$ [if rule $(ii)$ (resp. $(iv)$) of \ref {4.7} applies]. Let $s$ be its entry.

\subsection {Lines with label $1$}\label {6.4}

Retain the  notation of \ref {6.3}.

\begin {lemma}

\

$(i)$.  For all $t \in [1,n]$, there is at most one right going line with label $1$ from $b(t)$ and exactly one if $C_{b(t)}$ exists, in which case its right end point is $b(s)$.

\


$(ii)$.   For all $t \in [1,n]$, there is at most one left going line $\ell$ with label $1$ from $b(t)$ and exactly one in the following two cases.

\

$(a)$. Either the entry $s\in C\cap R_u$  does not occur (in $\mathscr T(\infty)$) in $R_{u+1}\cap C'$.  In this case the left end-point of $\ell$ is $b(s)$.

\

$(b)$. Or if $s$ does occur in $\mathscr T(\infty)$ in $R_{u+1}\cap C'$ and there is an entry $s'$ of $\mathscr T(\infty)$ in $R_{u+1}\cap C$ In this case the left end-point of $\ell$ is $b(s')$.
\end {lemma}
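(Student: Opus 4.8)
The plan is to read both families of label-$1$ lines out of $b(t)$ directly off the completed tableau $\mathscr T(\infty)$, converting the column-staircase description of Section \ref{5} into the local bookkeeping of \ref{6.1}--\ref{6.3}. The two ``at most one'' clauses come for free: by Corollary \ref{5.5.2} a box carries a unique right going line with label $1$ (the only rival right going line being $\ell_\ast$, which is labelled $\ast$), and by Corollary \ref{5.5.1} it carries a unique left going line with label $1$; in each case one first places $b(t)$ between the appropriate neighbouring pair $C,C'$. Hence all the work lies in producing the one line that occurs and naming its far box.

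For $(i)$ I would present $b(t)$ as the initial box $b(\mathscr S^c_u)$ of its (possibly empty) column staircase, as allowed by Theorem \ref{3.6} and \ref{4.6.3}, and feed this into Lemma \ref{5.6.2}. That lemma yields the $\ast$ lines running to the steps $\mathfrak s_S(t)$ --- recovering Lemma \ref{6.2}(i) --- together with at most one further line labelled by $1$, present exactly when a column of height $\geq c$ stands to the right of the staircase, i.e.\ exactly when $C_{b(t)}$ of \ref{6.3} exists. It remains to match its endpoint with $b(s)$, and I would do this by the row count built into $\mathscr T(\infty)$: the entry $t$ is halted just before $C_{b(t)}$, the halting being by rule \ref{4.7}(ii) when the box of $\mathscr D$ stopping it sits in $R_{v-1}$ (the bottom of a shorter column $C''(c)$, so the label-$1$ line lands in $R_c=R_{v-1}$) and by rule \ref{4.7}(iv) when it sits in $R_v$ (a column $C''(c+1)$ of height $\geq c+1$, so the line lands in $R_{c+1}=R_v$). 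Either way the endpoint is the box $b(s)$ named in \ref{6.3}, and $(i)$ follows.

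Assertion $(ii)$ is the mirror statement and is read off by asking which right going label-$1$ line \emph{ends} at $b(t)=C'\cap R_u$. The deciding datum is the entry $s$ of the box $C\cap R_u$ immediately to its left and whether, in building $\mathscr T(\infty)$, it is pushed down into $R_{u+1}\cap C'$. In case $(a)$ it is not, so by Corollary \ref{5.5.2} the label-$1$ line issuing from $b(s)$ runs straight to $b(t)$ and $\ell$ has left endpoint $b(s)$. In case $(b)$ the entry $s$ descends into $R_{u+1}\cap C'$ by rule \ref{4.7}(iii); this move is a $\ast$ step, so $b(s)$ now feeds a $\ast$ line rather than the line sought, and the label-$1$ line into $b(t)$ must be supplied one row lower, from the box $b(s')$ where $s'$ is the entry of $R_{u+1}\cap C$ in $\mathscr T(\infty)$, producing a right and up-going line rising by exactly one row. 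This is precisely the ``hopping over'' permitted by \ref{5.4}(ii) and analysed through the adjacent-staircase bookkeeping of \ref{5.6.3}.

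I expect the main obstacle to be exactly case $(b)$. There one must prove that the displacement of $s$ by a $\ast$ step does not simply destroy the label-$1$ line into $b(t)$ but forces it to be donated by $b(s')$ from the row below, and that this is the unique way a left going label-$1$ line can fail to be horizontal. This is the content hidden in the old ``gathering of loose ends'', and controlling it cleanly requires the propagation argument of \ref{5.6.3} together with the fact, from \ref{5.4}(ii) and Corollary \ref{5.5.1}, that a left going label-$1$ line is down-going by at most one row. The degenerate configurations --- $b(t)$ already extremal so that $C_{b(t)}$ is absent, or $C$ possessing no box in $R_u$ or in $R_{u+1}$ --- then have to be disposed of separately, but these are routine.
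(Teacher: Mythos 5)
Your proposal is correct and takes essentially the same route as the paper: the paper's own proof is a one-line deferral to the bookkeeping remarks of \ref{6.3} (with part $(ii)$ read off as the ``opposite'' of part $(i)$), and your argument simply unpacks that bookkeeping, citing the uniqueness statements of Corollaries \ref{5.5.1} and \ref{5.5.2}, the staircase line description of Lemma \ref{5.6.2}, the stopping rules of \ref{4.7}, and the adjacent-staircase analysis of \ref{5.6.3} that underlie those remarks. The additional detail you supply --- the rule $(ii)$/$(iv)$ row-matching in $(i)$ and the case $(b)$ ``hopping'' analysis in $(ii)$ --- is exactly what the paper leaves implicit.
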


\begin {proof} (i) follows from the remarks in \ref {6.3} and (ii) is its ``opposite''.  Of course these assertions are more difficult to state correctly than to prove.
\end {proof}

\subsection {Diagrammatic Presentation}\label {6.5}

Take $t \in [1,n]$.  As noted in \ref {6.1} each step is a box $b$ in $\mathscr D$.  By Lemma \ref {6.2} we may draw a vertical line in $\mathscr T(\infty)$ going down from $b$ by one row with label $\ast$ to represent the corresponding line in $\mathscr T$ with label $\ast$.  To this we adjoin the left (resp. right) going line labelled by a $1$ from $b^{d}(t)$ (resp. $b^{f}(t)$) to obtain a diagrammatic presentation of $\ell(\mathscr D)$ in $\mathscr T(\infty)$.

Now consider such a vertical line going from entry $j$ up to entry $k$. If $j$ (resp. $k$) has a left (resp. right) going line with label $1$ and end-point $i$ (resp. $l$), then $(i,j,k,l)$ is a VS quadruplet.  In this case join $j,l$ by a dotted line with label $1$. It is immediate that

\begin {lemma} The set of all lines with label $1$ in $\mathscr T(\infty)$ give the root subspaces of $\mathfrak m$ which define $E_{VS}$.
\end {lemma}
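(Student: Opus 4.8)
The plan is to unwind the definitions so that the statement becomes a matter of matching two descriptions of the same collection of root vectors. Recall from \ref{5.10} that $E_{VS}$ is the span of the root vectors in $e_{VS}$, where $e_{VS}$ enlarges $e$ by adjoining the coordinate $x_{j,l}$ for each VS quadruplet $(i,j,k,l)$, together with the root vectors $x_{i,j}$ already present in $e$ (these correspond exactly to the lines $\ell_{i,j}$ with label $1$ in $\ell(\mathscr D)$). So $E_{VS}$ is spanned by two kinds of root vectors: those coming from label-$1$ lines $\ell_{i,j}\in\ell(\mathscr D)$, and those coming from the ``VS'' coordinates $x_{j,l}$, one per VS quadruplet. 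The claim is that the totality of label-$1$ lines drawn in $\mathscr T(\infty)$ produces precisely this spanning set.

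First I would account for the label-$1$ lines of $\ell(\mathscr D)$ itself. By the diagrammatic presentation of \ref{6.5}, every such line is realized in $\mathscr T(\infty)$ as a left- or right-going line with label $1$ attached at $b^d(t)$ or $b^f(t)$; Lemma \ref{6.4} guarantees that these are exactly the label-$1$ lines of $\ell(\mathscr D)$ (at most one in each direction from each box, existing precisely under the stated column/entry conditions). Thus the ``solid'' label-$1$ lines in $\mathscr T(\infty)$ recover exactly the root vectors $x_{i,j}$ occurring in $e$.

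Next I would handle the VS contributions. By definition of a VS quadruplet $(i,j,k,l)$ in \ref{5.10}, one has label-$1$ lines $\ell_{i,j},\ell_{k,l}$ and a label-$\ast$ line $\ell_{j,k}$ in $\ell(\mathscr D)$. Using Lemma \ref{6.2}, every label-$\ast$ line $\ell_{j,k}$ is a vertical step going down by one row from $b(j)$ to $b(k)$ in $\mathscr T(\infty)$; and by \ref{6.5} the dotted line with label $1$ joining $j$ to $l$ is drawn exactly when $j$ (the top of a vertical $\ast$-step) carries a left-going $1$-line to $i$ and $k$ (its bottom) carries a right-going $1$-line to $l$. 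This is precisely the configuration detecting a VS quadruplet, and the dotted line represents the coordinate $x_{j,l}$. Hence the dotted label-$1$ lines are in bijection with VS quadruplets and yield exactly the root vectors $x_{j,l}$ adjoined to form $e_{VS}$.

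Finally I would conclude by noting that the two families together—solid label-$1$ lines giving the $x_{i,j}$ of $e$, dotted label-$1$ lines giving the $x_{j,l}$ of the VS quadruplets—exhaust the label-$1$ lines of $\mathscr T(\infty)$ and span exactly $E_{VS}$. The main obstacle I anticipate is not any computation but bookkeeping: one must verify that the dotted-line criterion of \ref{6.5} fires on \emph{every} VS quadruplet and on \emph{no} spurious configuration, i.e. that the local test ``top of a $\ast$-step has a left-going $1$-line and bottom has a right-going $1$-line'' is equivalent to $(i,j,k,l)$ being a genuine VS quadruplet. This equivalence is exactly what Lemmas \ref{6.2} and \ref{6.4} are set up to provide, so the proof reduces to assembling those lemmas; as the statement itself indicates, it is \emph{immediate} once the diagrammatic dictionary is in place.
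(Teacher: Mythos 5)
Your proposal is correct and follows essentially the same route as the paper, which offers no argument beyond ``It is immediate that'': the diagrammatic dictionary of \ref{6.5}, built on Lemmas \ref{6.2} and \ref{6.4}, makes the statement a pure unwinding of the definition of $E_{VS}$ in \ref{5.10} — solid label-$1$ lines giving the root vectors of $e$, dotted label-$1$ lines giving the $x_{j,l}$ of the VS quadruplets. Your extra bookkeeping (checking the dotted-line criterion fires exactly on the VS quadruplets) is just a more explicit rendering of what the paper treats as immediate.
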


\textbf{Remark.}  In this diagrammatic presentation the elimination of the lines with label $\ast$ (excepting $l_\ast$) in proving Theorem \ref {5.8} takes place from right to left.  This makes the proof more transparent though perhaps not easier.

\

\textbf{Acknowledgements}.  During the academic year $2021-22$, when this paper was conceived and written, the first author was supported by ISF grant 2017/2019 as a postdoctoral student of Anna Melnikov at the University of Haifa.  We would lke to thank her for her generosity and hospitality.

\section {Index of Notation}\label {7}

Symbols used frequently are given below in the sections in which they are first defined.

$1.1$.  \   $I, \mathscr N$.

$1.2$.   \  $e,h,\mathfrak p,V$.

$1.3$.  \   $P, \mathfrak m$.

$1.4$.   \  $n, \textbf{M}, \mathscr N^e, E_{VS}$.

$2.1$.   \  $[u,v], \textbf{c}, \mathscr D, C_i, c_i, [\mathscr D], \mathscr D|, \mathscr T, R_j,R^i, b_{i,j}, x_{i,j}, b(t)$.

$2.2$.   \  $\ell(\mathscr D), \ell_{i,j}, L, \textbf{B}_i, \textbf{C}_i$.

$2.3$.   \   $\mathscr D^k, \ell(\mathscr D^k,C_k)$.

$2.4$.   \  $\ell_t, r_t$.

$3.5$.  \   $c^i, C_i^L, b_0,b_m$.

$4.2$.  \   $\mathscr T',\mathscr D'$.

$4.7$.  \   $\leq, \preceq,i', \mathscr T(s), \mathscr T(\infty)$.

$5.2$.   \  $\ell(\mathscr D^k), C_k$.

$5.5$.  \   $R^t(C,C'), \ell_\ast$.

\section {A Worked Example.}\label {8}

Consider the parabolic given by the composition  $(1,2,4,3,2,3,4,1,1,2)$.


   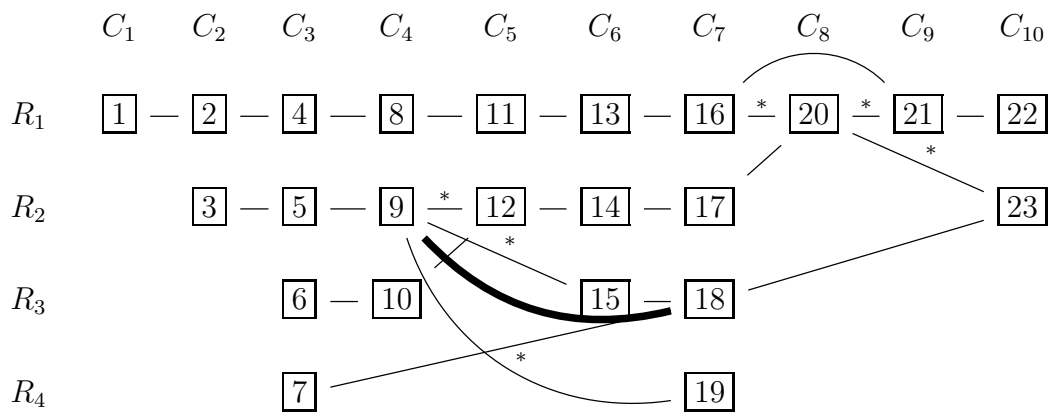
\begin{figure}[H]

\begin{center}
\begin{tikzcd}[row sep=1 em,
column sep = 0.9em]
  &C_1&C_2&C_3&C_4&C_5&C_6&C_7&C_8&C_9&C_{10}\\
R_1& \fbox {1}\arrow[-,r]&  \fbox {2}\arrow[-,r]& \fbox {4}\arrow[-,r]& \fbox {8}\arrow[-,r]& \fbox {11}\arrow[-,r]& \fbox {13}\arrow[-,r]& \fbox {16}\arrow[-,r,"*"]  \arrow[-,rr,bend left=45]& \fbox {20}\arrow[-,r,"*"]\arrow[-,drr,"*"]&\fbox {21}\arrow[-,r]&\fbox {22}\\
R_2&  &  \fbox {3}\arrow[-,r]& \fbox {5}\arrow[-,r, 
]& \fbox {9}\arrow[-,r,"*"]\arrow[-,drr,"*"]\arrow[-,rrrd,,bend right=3
0,
,line width=1mm ]\arrow[-,rrrdd,"*",bend right=40]& \fbox {12}\arrow[-,r]& \fbox {14}\arrow[-,r]& \fbox {17}\arrow[-,ur]  & &&\fbox {23}\\
R_3&  &   & \fbox {6}\arrow[-,r ]& \fbox {10}\arrow[-,ur]& & \fbox {15}\arrow[-,r, 
]& \fbox {18} \arrow[-,rrru] & &&\\
R_4&  &   & \fbox {7}\arrow[-,rrrru ]& & & & \fbox {19} & &&\\

\end{tikzcd}\\

%
%
%
%
\caption{The tableau $\mathscr T$ for this composition numbered according the procedure given in \ref {2.1}.  The labelled set of lines $\ell(\mathscr D)$ is given by the procedure described in \cite [5.4]{FJ2}. The set of $VS$ pairs $(i,j,k,l)$ is through their definition in bijection with the subset of lines $\ell_{j,k}$ labelled by a $\ast$ for which there exist lines $\ell_{i,j},\ell_{k,l}$ labelled by a $1$. Thus there are at most $6$ and in fact only $3$, namely $(5,9,12,14),(5,9,15,18),(17,20,21,22)$. Only the middle one is not superfluous and as such we call it ``bad'' - see \cite [4.4.3]{FJ3}. It gives an additional contribution to $e$ indicated by the boldface line. The classification of the superfluous quadruplets is unknown.  } \label{fig1}
\end{center}
\end{figure}

\begin{figure}[H]

\begin{center}
\begin{tikzcd}[row sep=1 em,
column sep = 0.9em]
  &C_1&C_2&C_3&C_4&C_5&C_6&C_7&C_8&C_9&C_{10}\\
R_1& \fbox {1}\arrow[-,r]&  \fbox {2}\arrow[-,r]& \fbox {4}\arrow[-,r]& \fbox {8}\arrow[-,r]& \fbox {11}\arrow[-,r]& \fbox {13}\arrow[-,r]& \blue \textbf{\fbox {16}} & \blue\textbf{\fbox {20}} \arrow[d,-,dashed,"*"]
 &\fbox {21}\arrow[d,-,dashed,"*"]
  \arrow[-,r]&\fbox {22}\\
R_2&  &  \fbox {3}\arrow[-,r]& \fbox {5}\arrow[-,r, 
]&  \textbf{\blue{ \fbox {9}}}& \fbox {12} \arrow[d,-,dashed,"*"]
 \arrow[-,r]& \fbox {14}\arrow[-,r]& \fbox {17}\arrow[-,ur]  &\textbf{\blue{ \fbox {16}}} \arrow[-,ru]&\textbf{\blue{ \fbox {20}}}\arrow[-,ru,bend right=30,line width=1mm] &\fbox {23}\arrow[d,-,dashed,"*"]\\
 R_3&  &  & \fbox {6}\arrow[-,r]& \fbox {10}\arrow[-,ru]&\textbf{\blue{ \fbox {9}}}\arrow[-,ru ,bend right=30 , line width=1mm] & \fbox {15}\arrow[d,-,dashed,"*"]\arrow[-,r, 
 ]&  \textbf{\blue{ \fbox {18}}}& \textbf{\blue{ \fbox {18}}} & \textbf{\blue{ \fbox {18}}}\arrow[-,ru]&\textbf{\blue{ \fbox {20}}}\\
  R_4&  &  & \textbf{\blue{ \fbox {7}}}&  \textbf{\blue{ \fbox {7}}}& \textbf{\blue{ \fbox {7}}}\arrow[-,ur]& \textbf{\blue{ \fbox {9}}}\arrow[-,ru,,bend right=3
0,
,line width=1mm]& \textbf{\blue\fbox {19}}\arrow[d,-,dashed,"*"] & \textbf{\blue{ \fbox {19}}} & \textbf{\blue{ \fbox {19}}} & \textbf{\blue{ \fbox {19}}}\\
    R_5&  &  && & & & \textbf{\blue{ \fbox {9}}} & \textbf{\blue{ \fbox {9}}}& \textbf{\blue{ \fbox {9}}}& \textbf{\blue{ \fbox {9}}}\\

\end{tikzcd}\\

\caption{The tableau $\mathscr{T}(\infty)$ obtained by the procedure given in \ref {4.7}.  The repeated entries are shown in blue (bold when reproduced in black and white). 
Its diagrammatic presentation follows \ref {6.5}. Because of the repeated entries the lines with label $\ast$ may be drawn vertically (see \ref {6.5}).  The three $VS$ pairs are indicated in boldface following \ref {6.5}.   The remaining lines recover those show in Figure $1$ with just one of the boldface lines incorporated.}

\end{center}
\end{figure}
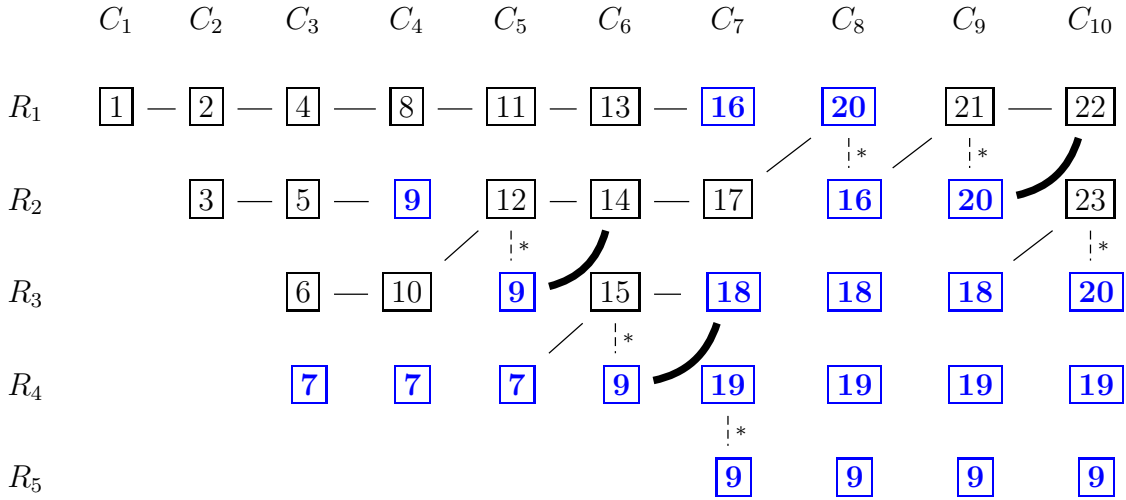

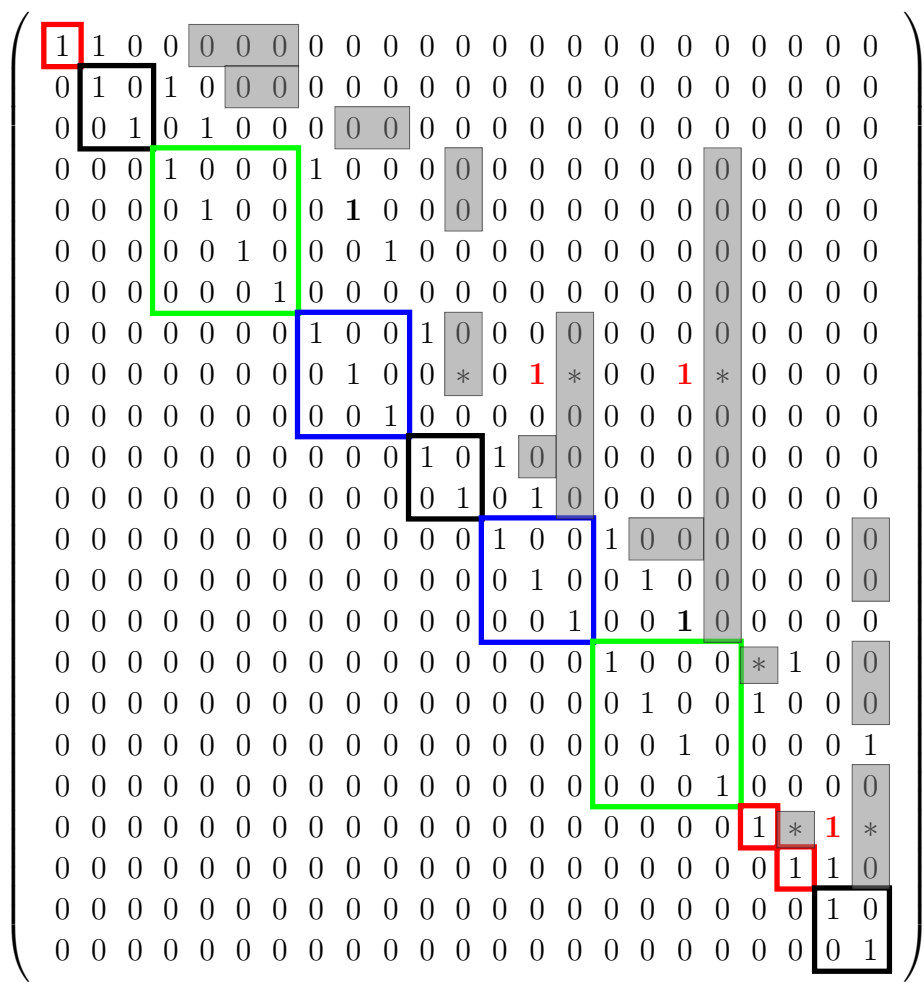
\begin{figure}[H]
\begin{center}
\begin{tikzpicture}
 \matrix [matrix of math nodes,left delimiter=(,right delimiter=)] (m)
 {
1 & 1 & 0 & 0 & 0 & 0 & 0 & 0 & 0 & 0 & 0 & 0& 0 & 0 & 0 & 0 & 0 & 0 & 0 & 0 & 0 & 0 & 0 \\
0 & 1 & 0 & 1 & 0 & 0 & 0 & 0 & 0 & 0 & 0 & 0& 0 & 0 & 0 & 0 & 0 & 0 & 0 & 0 & 0 & 0 & 0\\
0 & 0 & 1 & 0 & 1 & 0 & 0 & 0 & 0 & 0 & 0 & 0& 0 & 0 & 0 & 0 & 0 & 0 & 0 & 0 & 0 & 0 & 0 \\
0 & 0 & 0 & 1 & 0 & 0 & 0 & 1 & 0 & 0 & 0 & 0& 0 & 0 & 0 & 0 & 0 & 0 & 0 & 0 & 0 & 0 & 0 \\
0 & 0 & 0 & 0 & 1 & 0 & 0 & 0 & \mathbf{1} & 0 & 0 & 0& 0 & 0 & 0 & 0 & 0 & 0 & 0 & 0 & 0 & 0 & 0 \\
0 & 0 & 0 & 0 & 0 & 1& 0 & 0 & 0 & 1 & 0 & 0& 0 & 0 & 0 & 0 & 0 & 0 & 0 & 0 & 0 & 0 & 0 \\
0 & 0 & 0 & 0 & 0 & 0 & 1 & 0 & 0 & 0 & 0 & 0& 0 & 0 & 0 & 0 & 0 & 0 & 0 & 0 & 0 & 0 & 0 \\
0 & 0 & 0 & 0 & 0 & 0 & 0 & 1 & 0 & 0 & 1 & 0& 0 & 0 & 0 & 0 & 0 & 0 & 0 & 0 & 0 & 0 & 0 \\
0 & 0 & 0 & 0 & 0 & 0 & 0 & 0 & 1 & 0 & 0 & \ast & 0 &  \mathbf{\red{1}} & \ast & 0 & 0 & \mathbf{ \mathbf{\red{1}}} & \ast & 0 & 0 & 0 & 0 \\
0 & 0 & 0 & 0 & 0 & 0 & 0 & 0 & 0 & 1 & 0 & 0& 0 & 0 & 0 & 0 & 0 & 0 & 0 & 0 & 0 & 0 & 0 \\
0 & 0 & 0 & 0 & 0 & 0 & 0 & 0 & 0 & 0 & 1 & 0& 1 & 0 & 0 & 0 & 0 & 0 & 0 & 0 & 0 & 0 & 0 \\
0 & 0 & 0 & 0 & 0 & 0 & 0 & 0 & 0 & 0 & 0 & 1& 0 & 1 & 0 & 0 & 0 & 0 & 0 & 0 & 0 & 0 & 0\\
0 & 0 & 0 & 0 & 0 & 0 & 0 & 0 & 0 & 0 & 0 & 0& 1 & 0 & 0 & 1 & 0 & 0 & 0 & 0 & 0 & 0 & 0\\
0 & 0 & 0 & 0 & 0 & 0 & 0 & 0 & 0 & 0 & 0 & 0& 0 & 1 & 0 & 0 & 1 & 0 & 0 & 0 & 0 & 0 & 0 \\
0 & 0 & 0 & 0 & 0 & 0 & 0 & 0 & 0 & 0 & 0 & 0& 0 & 0 & 1 & 0 & 0 &  \mathbf{1} & 0 & 0 & 0 & 0 & 0 \\
0 & 0 & 0 & 0 & 0 & 0 & 0 & 0 & 0 & 0 & 0 & 0& 0 & 0 & 0 & 1 & 0 & 0 & 0 & \ast & 1 & 0 & 0 \\
0 & 0 & 0 & 0 & 0 & 0 & 0 & 0 & 0 & 0 & 0 & 0& 0 & 0 & 0 & 0 & 1 & 0 & 0 & 1 & 0 & 0 & 0 \\
0 & 0 & 0 & 0 & 0 & 0 & 0 & 0 & 0 & 0 & 0 & 0& 0 & 0 & 0 & 0 & 0 & 1 & 0 & 0 & 0 & 0 & 1 \\
0 & 0 & 0 & 0 & 0 & 0 & 0 & 0 & 0 & 0 & 0 & 0& 0 & 0 & 0 & 0 & 0 & 0 & 1 & 0 & 0 & 0 & 0 \\
0 & 0 & 0 & 0 & 0 & 0 & 0 & 0 & 0 & 0 & 0 & 0& 0 & 0 & 0 & 0 & 0 & 0 & 0 & 1 & \ast &  \mathbf{\red{1}} & \ast \\
0 & 0 & 0 & 0 & 0 & 0 & 0 & 0 & 0 & 0 & 0 & 0& 0 & 0 & 0 & 0 & 0 & 0 & 0 & 0 & 1 & 1 & 0 \\
0 & 0 & 0 & 0 & 0 & 0 & 0 & 0 & 0 & 0 & 0 & 0& 0 & 0 & 0 & 0 & 0 & 0 & 0 & 0 & 0 & 1 & 0 \\
0 & 0 & 0 & 0 & 0 & 0 & 0 & 0 & 0 & 0 & 0 & 0& 0 & 0 & 0 & 0 & 0 & 0 & 0 & 0 & 0 & 0 & 1\\
};
\draw[red,line width = 2pt] (m-1-1.south west) rectangle (m-1-1.north east);
\draw[line width = 2pt] (m-3-2.south west) rectangle (m-2-3.north east);
\draw[
green ,line width = 2pt] (m-7-4.south west) rectangle (m-4-7.north east);
\draw[blue,line width = 2pt] (m-10-8.south west) rectangle (m-8-10.north east);
\draw[line width = 2pt] (m-12-11.south west) rectangle (m-11-12.north east);
\draw[
blue,line width = 2pt] (m-15-13.south west) rectangle (m-13-15.north east);
\draw[green ,line width = 2pt] (m-19-16.south west) rectangle (m-16-19.north east);
\draw[red ,line width = 2pt] (m-20-20.south west) rectangle (m-20-20.north east);
\draw[red, line width = 2pt] (m-21-21.south west) rectangle (m-21-21.north east);
\draw[line width = 2pt] (m-23-22.south west) rectangle (m-22-23.north east);
\draw[fill=gray, opacity=0.5] (m-1-5.south west) rectangle (m-1-7.north east);
\draw[fill=gray, opacity=0.5] (m-2-6.south west) rectangle (m-2-7.north east);
\draw[fill=gray, opacity=0.5] (m-3-9.south west) rectangle (m-3-10.north east);
\draw[fill=gray, opacity=0.5] (m-5-12.south west) rectangle (m-4-12.north east);
\draw[fill=gray, opacity=0.5] (m-9-12.south west) rectangle (m-8-12.north east);
\draw[fill=gray, opacity=0.5] (m-11-14.south west) rectangle (m-11-14.north east);
\draw[fill=gray, opacity=0.5] (m-12-15.south west) rectangle (m-8-15.north east);
\draw[fill=gray, opacity=0.5] (m-13-17.south west) rectangle (m-13-18.north east);
\draw[fill=gray, opacity=0.5] (m-15-19.south west) rectangle (m-4-19.north east);
\draw[fill=gray, opacity=0.5] (m-14-23.south west) rectangle (m-13-23.north east);
\draw[fill=gray, opacity=0.5] (m-16-20.south west) rectangle (m-16-20.north east);
\draw[fill=gray, opacity=0.5] (m-17-23.south west) rectangle (m-16-23.north east);
\draw[fill=gray, opacity=0.5] (m-20-21.south west) rectangle (m-20-21.north east);
\draw[fill=gray, opacity=0.5] (m-21-23.south west) rectangle (m-19-23.north east);
\end{tikzpicture}\\

\caption {The canonical component is defined as in \cite[2.8]{FJ2}.
It is given as $\overline{B\cdot \mathfrak u}$, where $\mathfrak u$ is the root space complement in $\mathfrak m$ to the shaded areas of the matrix in the figure above. There is no $1$ in the shaded area whereas every $\ast$ does appear \cite [Cor. 6.9.3]{FJ2}.  The first property is also shared by the additional vectors corresponding to the $VS$ pairs \cite [Cor. 4.5.2]{FJ3}. One has $\overline{P\cdot E_{VS}}=\overline{B\cdot \mathfrak{u}}$ \cite [Prop. 4.5.3]{FJ3}.
One checks that for this equality only the bad $VS$ pair (noted in Figure $2$) is needed.}


\end {center}
\end {figure}


%
\end{document}